\numberwithin{equation}{section}  
\DeclareMathAlphabet{\curly}{U}{rsfs}{m}{n}  
\theoremstyle{remark}
\newtheorem{remark}{Remark}
\theoremstyle{plain}
\newtheorem{prop}{Proposition}
\newtheorem{lem}{Lemma}[section]
\newtheorem{thm}{Theorem}
\newtheorem{definition}{Definition}
\newtheorem*{prop64new}{Proposition 6.4'}
\newtheorem*{lem13new}{Lemma 1.3'}
\newtheorem*{gt-maintheorem}{Theorem A \cite[Main Theorem]{gt-linearprimes}}
\numberwithin{equation}{section}
\newcommand{\Z}{\mathbb{Z}}
\newcommand{\R}{\mathbb{R}}
\newcommand{\E}{\mathbb{E}}  
\newcommand{\PR}{\mathbb{P}}  
\renewcommand{\pmod}[1]{\allowbreak\mkern7mu({\operator@font mod}\,\,#1)}
\newcommand{\bal}{\[\begin{aligned}}
\newcommand{\eal}{\end{aligned}\]}
\newcommand{\be}{\begin{equation}}
\newcommand{\ee}{\end{equation}}
\newcommand{\del}{\ensuremath{\delta}}
\newcommand{\eps}{\ensuremath{\varepsilon}}
\newcommand{\g}{\gamma}
\renewcommand{\le}{\leqslant}
\renewcommand{\leq}{\leqslant}
\renewcommand{\ge}{\geqslant}
\renewcommand{\geq}{\geqslant}
\renewcommand{\(}{\left(}
\renewcommand{\)}{\right)}
\newcommand{\pfrac}[2]{\left(\frac{#1}{#2}\right)}  
\newcommand{\asym}{\sim}   
\newcommand{\rel}{\nsststile{}{}}  
\newcommand{\relr}{\nsdtstile{}{}} 
\newcommand{\relra}{\nsdtstile{}{\vect{\mathbf{a}}}} 
\newcommand{\vect}[1]{{\ensuremath{\vec{#1}}}}
\newcommand{\PP}{\mathcal{P}}
\newcommand{\QQ}{\mathcal{Q}}
\newcommand{\cS}{\mathcal{S}}
\newcommand{\cR}{\mathcal{R}}
\newcommand{\vol}{\operatorname{vol}}
\newcommand{\main}{\operatorname{main}}
\newcommand{\err}{\operatorname{err}}
\begin{document}

\title{Large gaps between consecutive prime numbers}

\author{Kevin Ford}
\address{Department of Mathematics\\ 1409 West Green Street \\ University
of Illinois at Urbana-Champaign\\ Urbana, IL 61801\\ USA}
\email{ford@math.uiuc.edu}

\author{Ben Green}
\address{Mathematical Institute\\
Radcliffe Observatory Quarter\\
Woodstock Road\\
Oxford OX2 6GG\\
England }
\email{ben.green@maths.ox.ac.uk}

\author{Sergei Konyagin}
\address{Steklov Mathematical Institute\\
8 Gubkin Street\\
Moscow, 119991\\
Russia}
\email{konyagin@mi.ras.ru}

\author{Terence Tao}
\address{Department of Mathematics, UCLA\\
405 Hilgard Ave\\
Los Angeles CA 90095\\
USA}
\email{tao@math.ucla.edu}

\begin{abstract}Let $G(X)$ denote the size of the largest gap between consecutive primes below $X$. Answering a question of Erd\H{o}s, we show that  \[ G(X) \geq f(X) \frac{\log X \log \log X \log \log \log \log X}{(\log \log \log X)^2},\] where $f(X)$ is a function tending to infinity with $X$.  Our proof combines existing arguments with a random construction covering a set of primes by arithmetic progressions.  As such, we rely on recent work on the existence and distribution of long arithmetic progressions consisting entirely of primes.
\end{abstract}

\maketitle

\setcounter{tocdepth}{1}
\tableofcontents


\section{Introduction}

Write $G(X)$ for the maximum gap between consecutive primes less than $X$. It is clear from the prime number theorem that
\[ G(X) \geq (1 + o(1)) \log X,\]
as the \emph{average} gap between the prime numbers which are $\le X$ is $\asym \log X$. 
In 1931, Westzynthius \cite{West} proved that infinitely often, the gap between consecutive 
prime numbers can be an arbitrarily large multiple of the average gap, that is,
$G(X)/\log X\to \infty$ as $X\to\infty$.  Moreover, he proved the qualitative
bound\footnote{As usual in the subject, $\log_2 x = \log \log x$, $\log_3 x = \log \log \log x$, and so on.  The conventions for asymptotic notation such as $\ll$ and $o()$ will be defined in Section \ref{not-sec}.}
\[
G(X) \gg \frac{\log X \log_3 X}{\log_4 X}.
\]
In 1935 Erd\H{o}s \cite{erdos-gaps} improved this to
\[ G(X) \gg \frac{\log X \log_2X}{(\log_3X)^2}\] and in 1938 Rankin \cite{R1} 
made a subsequent improvement
\[ G(X) \geq (c + o(1)) \frac{\log X \log_2 X \log_4 X}{(\log_3 X)^2}\] with $c = \frac{1}{3}$. The constant $c$ was subsequently improved several times: to $\frac{1}{2}e^{\gamma}$ by Sch\"onhage \cite{schonhage}, then to $c = e^{\gamma}$ by Rankin \cite{rankin-1963}, $c = 1.31256 e^{\gamma}$ by Maier and Pomerance \cite{MP} and, most recently, $c = 2e^{\gamma}$ by Pintz \cite{P}.

Our aim in this paper is to show that $c$ can be taken arbitrarily large.

\begin{thm}\label{mainthm} Let $R>0$.  Then for any sufficiently large $X$,
there are at least
$$R\frac{\log X \log_2 X \log_4 X}{(\log_3 X)^2}$$
consecutive composite natural numbers not exceeding $X$.
\end{thm}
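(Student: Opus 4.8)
The plan is to follow the classical Erdős–Rankin strategy of sieving out residue classes to produce a long stretch of composites in an interval of length $y$, but to gain the extra factor of $R$ (making the constant arbitrarily large) by using a more efficient covering of the "leftover" primes. Concretely, set $y$ to be of size roughly $\log X \log_2 X \log_4 X/(\log_3 X)^2$ times a large constant, let $z = y^{1/2}$ or so, and let $P$ be the product of primes up to some threshold. The goal is to choose, for each prime $p \le y$, a residue class $a_p \pmod p$ so that every integer in $[1,y]$ lies in at least one class $a_p \pmod p$; then by CRT and Dirichlet/PNT for the modulus $\prod_{p\le y} p$, one finds a location $m$ in $[1,X]$ with $m + n$ divisible by some chosen $p$ for every $1 \le n \le y$, giving $y$ consecutive composites below $X$.

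The steps, in order: (1) Use the small primes $p \le \log X$ (say) to sieve out all $n \in [1,y]$ with a small prime factor — this is cheap and removes all but the integers composed of primes in $(\log X, y]$, plus the primes in that range, leaving a sparse "survivor" set $\mathcal{S}$ of size $O(y \log_2 X/\log X \cdot \text{stuff})$, essentially the primes in $(z,y]$ after the standard Buchstab/Mertens bookkeeping. (2) The inefficiency in Rankin's method is that one then assigns one leftover prime $q \in (z, y]$ to each remaining modulus; the improvement — this is the heart of the paper and where the recent work on arithmetic progressions in the primes enters — is to cover the survivor set $\mathcal{S}$ (a set of primes) far more efficiently by arithmetic progressions, using a random construction: one selects residue classes randomly and invokes the machinery (Theorem A on linear equations in primes, the Green–Tao–Ziegler results) to show that a random choice of the $a_p$ covers all but a negligible fraction of $\mathcal{S}$ with high probability, at which point the few remaining survivors are mopped up with one prime each at no asymptotic cost. (3) Assemble via CRT and the prime number theorem in arithmetic progressions (or Westzynthius-style counting) to place the configuration below $X$, and optimize the parameters $z$, $y$, and the sieving threshold to extract the factor $R$.

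The main obstacle, and the technical core, is step (2): showing that the random assignment of residue classes covers the primes in $(z,y]$ efficiently enough. This is not a routine sieve estimate — it requires controlling the distribution of primes in arithmetic progressions to large moduli simultaneously, which is exactly why the paper must import the Green–Tao type results on the existence and equidistribution of prime tuples / linear patterns in the primes rather than relying on elementary methods. Establishing the necessary second-moment or concentration bound for the random covering, in a form that is uniform over the relevant ranges and compatible with the CRT gluing, is where essentially all the difficulty lies; the rest is a careful but standard optimization of the Erdős–Rankin bookkeeping.
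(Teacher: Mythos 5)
Your broad strategy is correct---this is indeed an Erd\H{o}s--Rankin argument in which the gain comes from covering the surviving primes more efficiently than one-per-modulus, and you have correctly identified that the needed input is the Green--Tao(--Ziegler) theory of linear equations in primes rather than elementary sieve theory, together with a random construction and a second-moment concentration bound. However, the parameter bookkeeping is off in a way that matters. The residue classes are chosen for primes $p \le x$ with $x \asymp \log X$, so that the CRT modulus $\prod_{p\le x} p = e^{(1+o(1))x}$ fits below $X$; your proposal uses primes $p \le y$ and the modulus $\prod_{p \le y} p$, but since $y \gg \log X$ this modulus is $e^{(1+o(1))y} \gg X$, and there is then no room to place the configuration in $[1,X]$ by CRT (nor is any appeal to Dirichlet/PNT in progressions needed: plain CRT plus the observation that each $m+t$ is a proper multiple of some $p \le x$ suffices). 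Likewise $z = y^{1/2}$ is far too large: with $u = \log y/\log z = 2$ the count of $z$-smooth integers in $[1,y]$ is $\rho(2) y \asymp y$, swamping the allowed error of $O(x/\log x)$. One needs $z = x^{\Theta(\log_3 x/\log_2 x)}$ so that $u \asymp \log_2 x/\log_3 x$ and de Bruijn gives $\# \cR^{\err} = y/\log^{3+o(1)} x = o(x/\log x)$.

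More substantively, the mechanism by which an arbitrarily large constant $R$ is extracted is not articulated. The gain is not that a single random choice of $a_p$ ``covers all but a negligible fraction'' of the survivor set; rather, for a fixed integer parameter $r$, each prime $p \in (x/2, x]$ is assigned a residue class $a_p = q$ chosen so that $q, q + r! p, \dots, q + (r-1) r! p$ is an $r$-term arithmetic progression of \emph{surviving primes}, so that the single class $a_p \pmod p$ sieves out $r$ elements at once. It is here, and only here, that the linear-equations-in-primes machinery is used (to guarantee that almost every such $p$ has about the expected number $\asymp \alpha_r y / \log^r x$ of such progressions available). The random sieving you describe plays a different role: it is applied over the \emph{medium} primes $\log x < s \le z$, reducing the primes in $(x/4, y]$ down to a set of size $\asymp \frac{r}{2\log r}\frac{x}{\log x}$; randomness there is used (via second moments) to ensure the residual set is ``regular'' with respect to the later AP-sieving, not to achieve the covering itself. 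Taking $r$ large and optimizing gives a factor $\asymp r/\log r$ in the length $y$, which is where $R$ comes from. Without spelling out the $r$-term-progression step and how it multiplies the number of elements removed per prime, the proposal does not actually explain how the constant becomes arbitrarily large, and the assertion that a random choice of $a_p$ covers nearly all of $\cS$ is false for the parameter ranges in play.
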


In other words, we have
$$ G(X) \geq f(X) \frac{\log X \log_2 X \log_4 X}{(\log_3 X)^2}$$
for some function $f(X)$ that goes to infinity as $X \to \infty$.
Theorem \ref{mainthm} settles in the affirmative a long-standing conjecture of
Erd\H os \cite{Erd90}.

Theorem \ref{mainthm} has been simultaneously and independently established by Maynard \cite{maynard} by a different method (relying on the sieve-theoretic techniques related to those used recently in \cite{maynard-small} to obtain bounded gaps between primes, rather than results on linear equations between primes).  As it turns out, the techniques of this paper and of that in \cite{maynard} may be combined to establish further results on large prime gaps; see the followup paper \cite{FGKMT} to this work and to \cite{maynard}.

Based on a probabilistic model of primes, Cram\'er \cite{Cra} conjectured
that\footnote{Cram\'er is not entirely explicit with this conjecture.
In \cite{Cra}, he shows that his random analogues $P_n$ of
primes satisfy $\limsup (P_{n+1}-P_n)(\log P_n)^{-2}=1$ and writes ``Obviously we
may take this as a suggestion that, for the particular sequence of
ordinary prime numbers $p_n$, some similar relation may hold''.}
\[
\limsup_{X\to\infty} \frac{G(X)}{\log^2 X} = 1, 
\]
and Granville \cite{Gra}, using a refinement of Cram\'er's model,
has conjectured that the $\limsup$ above is in fact at least $2e^{-\gamma}=1.1229\ldots$.  These conjectures are well beyond the reach of our methods.
Cram\'er's model also predicts that the normalized prime gaps
$\frac{p_{n+1}-p_n}{\log p_n}$ should have exponential distribution, that is,
$p_{n+1}-p_n \ge C\log p_n$ for about $e^{-C}\pi(X)$ primes $\le X$.
Numerical evidence from prime calculations up to 
$4\cdot 10^{18}$ \cite{numerical-tos}
matches this prediction quite closely, with the exception of values of $C$
close to $\log X$, in which there is very little data available.  In fact,
$\max_{X\le 4\cdot 10^{18}} G(X)/\log^2 X \approx 0.9206$, slightly below the
predictions of Cram\'er and Granville.

Unconditional upper bounds for $G(X)$ are far from the conjectured
truth, the best being
$G(X) \ll X^{0.525}$ and due to Baker, Harman and Pintz \cite{BHP}.
Even the Riemann Hypothesis only\footnote{Some slight improvements are available if one also assumes some form of the pair correlation conjecture; see \cite{heath}.} furnishes the bound $G(X)\ll X^{1/2}\log X$ \cite{Cra1920}.

All works on lower bounds for $G(X)$ have followed a similar overall plan
of attack: show that there are at least $G(X)$ consecutive
integers in $(X/2,X]$, 
each of which has a ``very small'' prime factor.  
To describe the results, we make the following definition.

\begin{definition}\label{y-def}
Let $x$ be a positive integer. Define $Y(x)$ to be the largest integer $y$ for which one may select residue classes $a_p \pmod{p}$, one for each prime $p \leq x$, which together ``sieve out'' (cover) the whole interval $[y] = \{1,\dots,y\}$.
\end{definition}

The relation between this function $Y$ and gaps between primes is encoded in the following simple lemma.

\begin{lem}\label{lem11}
Write $P(x)$ for the product of the primes less than or equal to $x$. Then we have $G(P(x)+ Y(x) + x) \geq Y(x)$ for all $x$.
\end{lem}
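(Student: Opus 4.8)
The plan is to convert a covering of the interval $[Y(x)]$ by residue classes into a long block of consecutive composite integers, via the Chinese Remainder Theorem --- the classical Westzynthius--Erd\H os--Rankin device. Write $Y=Y(x)$; the case $Y=0$ is vacuous, so assume $Y\ge 1$ (which forces $x\ge 2$: with no prime at all one cannot cover $\{1\}$). First I would appeal to Definition \ref{y-def} to fix residue classes $a_p\pmod p$, one for each prime $p\le x$, whose union covers $\{1,\dots,Y\}$: for every $n\in\{1,\dots,Y\}$ there is at least one prime $p\le x$ with $n\equiv a_p\pmod p$.

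Next I would solve, by CRT, the simultaneous congruences $m\equiv -a_p\pmod p$ over all primes $p\le x$; this has a solution unique modulo $P(x)$, and I would take the representative $m$ lying in the window $x<m\le x+P(x)$. The only genuinely substantive choice is pushing $m$ past $x$: for each $n\in\{1,\dots,Y\}$, choosing a prime $p\le x$ with $n\equiv a_p\pmod p$ gives $p\mid m+n$, and then $m+n\ge m+1>x\ge p$ shows $m+n$ is a proper multiple of $p$, hence \emph{composite} and not merely equal to some small prime. Thus $m+1,m+2,\dots,m+Y$ is a run of $Y$ consecutive composites, with top term $m+Y\le P(x)+Y+x$; the summand $x$ in the statement is precisely the room consumed by the translation $m>x$.

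Finally I would read off the prime gap: let $q$ be the largest prime $\le m$ (it exists, since $2\le m$) and $q'$ the next prime. The block just constructed forces $q'\ge m+Y+1$, so $q'-q\ge Y+1$, while $q\le m\le P(x)+x<P(x)+Y(x)+x$; by the definition of $G$ this yields $G(P(x)+Y(x)+x)\ge Y(x)$. I do not expect any real obstacle here --- the argument is essentially bookkeeping, and the two points to watch are (i) ensuring the constructed integers are genuinely composite, which is exactly why we arrange $m>x$, and (ii) the trivial verification that both the block of composites and the prime $q$ preceding it lie below $P(x)+Y(x)+x$.
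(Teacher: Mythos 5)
Your proof is correct and follows exactly the paper's argument: select covering residue classes, use CRT to place $m\equiv -a_p\pmod p$ in the window $(x,\,x+P(x)]$, and note that each $m+n$ for $n\in[Y]$ is a proper multiple of some $p\le x$. The only difference is that you spell out the final bookkeeping (introducing the bracketing primes $q,q'$) a bit more explicitly than the paper does, which is harmless.
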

\begin{proof}
Set $y = Y(x)$, and select residue classes $a_p \pmod{p}$, one for each prime $p \leq x$, which cover $[y]$. By the Chinese remainder theorem there is some $m$, $x <  m \leq x +P(x)$, with $m \equiv -a_p \pmod{p}$ for all primes $p \leq x$. We claim that all of the numbers $m+1,\dots, m+y$ are composite, which means that there is a gap of length $y$ amongst the primes less than $m+y$, thereby concluding the proof of the lemma. To prove the claim, suppose that $1 \leq t \leq y$. Then there is some $p$ such that $t \equiv a_p \pmod{p}$, and hence $m + t \equiv -a_p + a_p \equiv 0 \pmod{p}$, and thus $p$ divides $m + t$. Since $m+t > m > x \geq p$, $m + t$ is indeed composite.
\end{proof}

By the prime number theorem we have $P(x) = e^{(1 + o(1))x}$. It turns out (see below) that $Y(x)$ has size $x^{O(1)}$. Thus the bound of Lemma \ref{lem11} implies that
\[ G(X) \geq Y\big((1 + o(1)) \log X\big)\]
as $X \to \infty$.  Theorem \ref{mainthm} follows from this and the following bound for $Y$, the proof of which is the main business of the paper.
\begin{thm}\label{mainthm-y}
For any $R > 0$ and for sufficiently large $x$ we have
\begin{equation}\label{yxdef}
 Y(x) \geq R \frac{x \log x \log_3 x}{(\log_2 x)^2}.
\end{equation}
\end{thm}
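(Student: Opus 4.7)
The plan is an Erd\H{o}s--Rankin covering argument, refined by a random-construction step in which primes in arithmetic progressions (in the sense of the Green--Tao theorem, i.e.\ Theorem~A) replace the usual ``rough integers'' in the middle range.  Fix $R>0$ and set $y:=R' x\log x\log_3 x/(\log_2 x)^2$ with $R'$ much larger than $R$; the goal is to exhibit residue classes $a_p\pmod p$, one for each prime $p\le x$, whose union covers $[y]$.

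I would split the primes $p\le x$ into three bands.  \emph{Very small primes} $p\le z_0$ (with $z_0$ slowly growing, say $\log^A x$) are used with $a_p=0$; by Mertens the sifted set $\cS\subseteq[y]$ of integers coprime to $\prod_{p\le z_0}p$ has size $\asymp y/\log z_0$, of order $x\log_3 x/\log_2 x$.  At the top, I would set aside a band of \emph{large primes} $\PP\subset(x/2,x]$ with $|\PP|\asymp x/\log x$, reserving one prime per leftover survivor for a single-element ``clean-up'' at the very end.  The entire problem thus reduces to choosing residues for the \emph{medium primes} $p\in(z_0,x/2]$ so as to cut $\cS$ down to at most $|\PP|$ elements.

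For each medium $p$ I would choose $a_p$ randomly, weighted toward primes: draw a random prime $q\in[y]$ from the appropriate residue class modulo $\prod_{p'\le z_0}p'$ and set $a_p\equiv q\pmod p$.  This concentrates the medium sieve on $\cS$, and crucially, because the density of primes among the rough integers is $\asymp\log_2 x/\log_3 x$, each medium prime ``kills'' a correspondingly larger share of $\cS$ than in Rankin's original argument; this is where the improvement of the exponents from $\log_4 x/(\log_3 x)^2$ to $\log_3 x/(\log_2 x)^2$ comes from.  A first-moment computation shows that in expectation the number of residual survivors is $o(x/\log x)$, provided $R'$ is chosen large enough.  A variance / second-moment estimate --- or equivalently a hypergraph-covering argument --- then promotes this to the same bound for a concrete choice of the $a_p$'s with positive probability.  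The large primes in $\PP$ finally mop up what is left, yielding a covering and hence $Y(x)\ge y$.

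The main obstacle is the second-moment step: bounding the probability that two elements $n,n'\in\cS$ both survive the random medium sieve amounts to counting pairs of primes $q,q'\in[y]$ lying in prescribed joint residue classes modulo many medium primes, i.e.\ counting solutions to a linear system in primes.  Obtaining these counts with asymptotic main term and manageable error, uniformly across the moduli required, is precisely the content pulled in from Green--Tao, and is where Theorem~A enters essentially; establishing this uniformity (together with the related sieve-weighted variants needed for the first-moment analysis) is the main technical work of the paper.
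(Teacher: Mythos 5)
Your proposal does not reach the theorem, for two reasons: the one idea that actually makes the constant $R$ arbitrarily large is absent, and the quantitative accounting of your medium sieve does not close. On the first point: the shape $x\log x\log_3 x/(\log_2 x)^2$ is already Rankin's 1938 bound (your remark about ``improving the exponents from $\log_4 x/(\log_3 x)^2$ to $\log_3 x/(\log_2 x)^2$'' conflates the $G(X)$-scale and $Y(x)$-scale statements of the same estimate), so the entire content of the theorem is the unbounded constant. In your scheme each prime $p\in(x/2,x]$ mops up exactly one leftover survivor, which caps you at a fixed Rankin-type constant no matter how well the earlier stages perform. The mechanism in the paper is to make each such $p$ remove $r$ survivors for an arbitrary fixed $r$: one takes $a_p=q_p$ where $q_p,\,q_p+r!p,\dots,q_p+(r-1)r!p$ is an $r$-term arithmetic progression of primes lying inside the randomly sifted survivor set, so that a single residue class mod $p$ covers $r$ elements. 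This is where the Green--Tao--Ziegler machinery enters --- not, as you suggest, to control a pair-correlation term in a second-moment estimate, but to prove (Lemma \ref{first}, via the shifted variant Theorem \ref{dickson-shifted} of the main theorem of \emph{Linear equations in primes}) that for almost all $p$ and almost all $q$ these progressions occur with the expected frequency; the remaining probabilistic work is to choose the $q_p$ (again at random) so that the $r$-element blocks overlap little.

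On the second point: you take $a_p=0$ only for $p\le z_0=\log^A x$ and choose random prime-weighted classes for \emph{every} $p\in(z_0,x/2]$. For a fixed $n$ in the sifted set $\cS$, the probability that the random prime $q$ satisfies $q\equiv n\pmod p$ is, to first order, the proportion of primes of $[y]$ in that class, i.e.\ $\asymp 1/p$ on average; weighting toward primes therefore does not change the survival probability of $n$, which remains $\asymp\prod_{z_0<p\le x/2}(1-1/p)\asymp\log z_0/\log x$ by Mertens. Hence the expected number of survivors is $\asymp|\cS|\cdot\log z_0/\log x\asymp y/\log x$, exceeding your reservoir $\#\PP\asymp x/\log x$ by the factor $y/x\asymp R'\log x\log_3 x/(\log_2 x)^2\to\infty$; your claimed first-moment bound $o(x/\log x)$ is false, and no second-moment refinement can repair a wrong mean. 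What every argument from Westzynthius onward does here is make the single \emph{coherent} choice $a_p=0$ across the whole wide range $z<p\le x/4$, with $z$ as in \eqref{zdef}: that deterministic choice removes all non-smooth composites with a prime factor in that range and reduces the problem to covering the set $\QQ$ of primes in $(x/4,y]$, of size $\asymp y/\log x$, with random choices confined to $(\log x,z]$. Even after that optimal two-stage reduction one is left with $\asymp\frac{r}{2\log r}\frac{x}{\log x}$ survivors, i.e.\ $\asymp r/\log r$ times too many for a one-prime-per-survivor clean-up, and it is precisely the $r$-fold coverage described above that closes this final gap.
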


The function $Y$ is intimately related to \emph{Jacobsthal's function} $j$. If $n$ is a positive integer then $j(n)$ is defined to be the maximal gap between integers coprime to $n$. In particular $j(P(x))$ is the maximal gap between numbers free of prime factors $\leq x$, or equivalently $1$ plus the longest string of consecutive integers, each divisible by some prime $p \leq x$.  The construction given in the proof of Lemma \ref{lem11} in fact proves that 
\[ j(P(x)) \geq Y\big((1 + o(1)) \log P(x)\big) = Y\big((1 + o(1)) x\big).\]
This observation, together with results in the literature, gives upper bounds for $Y$. The best upper bound known is $Y(x) \ll x^2$, which comes from Iwaniec's work \cite{Iw} on Jacobsthal's function. It is conjectured by Maier and Pomerance that in fact $Y(x) \ll x (\log x)^{2 + o(1)}$. This places a serious (albeit conjectural) upper bound on how large gaps between primes we can hope to find via lower bounds for $Y(x)$: a bound in the region of $G(X) \gtrapprox \log X (\log \log X)^{2 + o(1)}$, far from Cram\'er's conjecture, appears to be the absolute limit of such an approach.

We turn now to a discussion of the proof of Theorem \ref{mainthm-y}. Recall that our task is to find $y$, as large as possible, so that the whole interval $[y]$ may be sieved using congruences $a_p \pmod{p}$, one for each prime $p \leq x$. Prior authors divided the sieving into different steps, a key to all of
them being to take a common value of $a_p$ for ``large'' $p$,
say $a_p=0$ for $z<p<\del x$, where $\delta>0$ is a small constant and $z=x^{c\log_3 x/\log_2 x}$ for
some constant $c>0$.
The numbers
in $[y]$ surviving this first sieving either have all of their prime factors 
$\le z$ (i.e., they are ``$z$-smooth'') or are of the form $pm$ with $p$ prime
and $m\le y/\del x$.  One then appeals
to bounds for smooth numbers, e.g. \cite{deB},
to see that there are very few numbers of the first kind, say $O(x/\log^2 x)$.
By the prime number theorem there are
 $\asym y\log_2 x/\log x$ unsieved numbers of the second kind.
By contrast, if one were to take a random choice for $a_p$ for 
$z<p<\del x$, then with high probability, the number of unsifted integers
in $[y]$ would be considerably larger, about
$y\log z/\log x$.

One then performs a second sieving, choosing $a_p$ for ``small'' $p\le z$. 
Using a greedy algorithm, for instance, one can easily sieve out all but
\[
\frac{y\log_2 x}{\log x} \prod_{p\le z} \(1-\frac{1}{p}\) \asym
e^{-\gamma} \frac{y\log_2 x}{\log x \log z}
\]
of the remaining numbers.  There are alternative approaches using explicit
choices for $a_p$; we will choose our $a_p$ at random.  (The set $V$ of numbers surviving this second sieving has about the same size in each case.)

 If $|V| \le \pi(x)-\pi(\del x)$, the number of 
``very large'' primes, then we perform a (rather trivial) third sieving as follows: each $v \in V$ can be matched with
one of these primes $p$, and one may simply take $a_p=v$. 
This is the route followed by all authors up to and including Rankin \cite{rankin-1963}; improvements 
to $G(x)$ up to this point depended on improved bounds for counts of 
smooth numbers.  
The new idea introduced by Maier and Pomerance \cite{MP} was to make
the third sieving more efficient (and less trivial!) by using many $p
\in (\del x, x]$ to sift not one but \emph{two} elements of $V$. 
To do this they established a kind of 
``twin primes on average'' result implying that for most
$p\in (\del x,x]$, there are many pairs of elements of $V$ that are
 congruent modulo $p$.  Then the authors proved a crucial combinatorial result
 that \emph{disjoint} sets $V_p$ exist, each of two elements congruent
 modulo $p$, for a large proportion of these primes $p$; that is, for
 a large proportion of $p$, $a_p\mod p$ will sift out two elements of
 $V$, and the sifted elements are disjoint.
Pintz \cite{P} proved a ``best possible'' version of the combinatorial
result, that in fact one can achieve a ``nearly perfect matching'',
that is, disjoint sets $V_p$ for almost all primes $p\in (\del x,x]$,
and this led to the heretofore best lower bound for $G(X)$.

Heuristically, much more along these lines should be possible. With $y$ comparable to the right-hand side of \eqref{yxdef}, the set $V$ turns out have expected cardinality comparable to a large multiple of $x/\log x$.  Assuming that $V$ is a ``random'' subset of $[y]$, for every 
prime $p\in (\del x,x]$ there should in fact be a residue class $a\pmod p$ containing
$\gg \log x/(\log_2 x)^{O(1)}$ elements of $V$. (Roughly, the heuristic
predicts that the sizes of the sets $V \cap (a\pmod p)$ 
are Poisson distributed with parameter $\approx |V|/p$.) Whilst we
cannot establish anything close to this, we are able to use almost all
primes $p \in (x/2,x]$ to sieve $r$ elements of $V$, for any fixed
  $r$. Where Maier and Pomerance appealed to (in fact proved) a result
  about pairs of primes on average, we use results about arithmetic
  progressions of primes of length $r$, established in work of the
  second and fourth authors \cite{gt-linearprimes},  \cite{gt-nilmobius} and of these authors and
  Ziegler \cite{GTZ}. Specifically, we need results about progressions $q, q+ r!
  p, q+ 2r! p, \dots, q + (r-1)r! p$; if one ignores the technical factor $r!$, these are
  ``progressions of primes with prime common difference''. By taking
  $a_p = q$, the congruence $a_p \pmod{p}$ allows us to sift out all
  $r$ elements of such a progression, and it is here that we proceed
  more efficiently than prior authors. 
Ensuring that many of these $r$-element sifted sets are disjoint (or
at least have small intersections) is a rather difficult
problem, however.  Rather than dealing with these
intersections directly, we utilize the
random choice of $a_p$ in the second step to prove that with high 
probability, $V$ has a certain regularity with respect to 
intersections with progressions of
the form  $q, q+ r! p, q+ 2r! p, \dots, q + (r-1)r! p$.  We then prove
that most elements of $V$ survive the third sieving with uniformly
small probability.

\subsection{Acknowledgments}

The research of SK was partially performed while he was
visiting KF at the University of Illinois at 
Urbana--Champaign.  Research of KF and SK was
also carried out in part at the University of Chicago.
KF and SK are thankful to Prof. Wilhelm Schlag
for hosting these visits.  KF also thanks the
hospitality of the Institute of Mathematics and Informatics of the
Bulgarian Academy of Sciences.

Also, the research of the SK and TT was partially
performed while SK was 
visiting the Institute for Pure and Applied Mathematics (IPAM) at UCLA,
which is supported by the National Science Foundation.

KF was supported by NSF grant DMS-1201442.
BG was supported by ERC Starting Grant 279438, \emph{Approximate algebraic structure}. 
TT was supported by a Simons Investigator grant, the
James and Carol Collins Chair, the Mathematical Analysis \&
Application Research Fund Endowment, and by NSF grant DMS-1266164. 

Finally, all four of us wish to thank Andrew Granville and James Maynard for helpful discussions.

\subsection{Notational conventions}\label{not-sec}
  
We use $f=O(g)$ and $f\ll g$ to denote the claim that there is a constant $C>0$ such that
$|f(\cdot)| \le C g(\cdot)$ for all $\cdot$ in the domain of $f$.
We adopt the convention that $C$ is independent of any parameter
unless such dependence is indicated by subscript such as  $\ll_u$,
except that $C$ may depend on the parameter $r$ (which we consider to
be fixed) in Sections
\ref{sec:prime-progressions}--\ref{sec:prob} and 
\ref{first-lemma-sec}--\ref{second-lemma-sec}.

In  Sections
\ref{sec:prime-progressions}--\ref{sec:prob} and
\ref{first-lemma-sec}--\ref{second-lemma-sec}, 
the symbol $o(1)$ will stand for a function which tends
to $0$ as $x\to\infty$, uniform in all parameters except $r$ unless 
otherwise indicated.
The same convention applies to the asymptotic
notation
$f(x) \asym g(x)$, which means $f(x)=(1+o(1))g(x)$.
In Sections \ref{dickson-shifted-sec} and the Appendix, $o(g(N))$
refers to some function $h(N)$ satisfying $\lim_{N\to\infty} h(N)/g(N)=0$.

The symbols $p$, $q$ and $s$ will always denote prime numbers,
except that in the the Appendix,
$s$ is a positive integer which measures the complexity of a 
system of linear forms.

Finally, we will be using the probabilistic method and will thus be working with finite probability spaces.
Generically we write $\PR$ for probability, and $\E$ for expectation. 
If a finite set $A$ is equipped with the uniform probability measure,
we write $\PR_{a\in A}$ and $\E_{a\in A}$ for the associated probability and
expectation.  Variables in boldface will denote random real-valued scalars,
while arrowed boldface symbols denote random vectors, e.g.
$\vect{\mathbf{a}}$.

We also use $\# A$ to denote the cardinality of $A$, and
for any positive real $z$, we let $[z] := \{ n \in \mathbf{N}: 1 \leq
n \leq z \}$ denote the set of natural numbers up to $z$.

%
\section{On arithmetic progressions consisting of primes}\label{sec:prime-progressions}
%

A key tool in the proof of Theorem ~\ref{mainthm-y} is an asymptotic formula
for counts of arithmetic progressions of primes. In fact, we shall be
interested in progressions of primes of length $r$ whose common
difference is $r!$ times a prime\footnote{One could replace $r!$ here
  if desired by the slightly smaller \emph{primorial} $P(r)$; 
as observed long ago by Lagrange and Waring \cite{dickson}, this primorial must divide the spacing of any sufficiently large arithmetic progression of primes of length $r$.  However, replacing $r!$ by $P(r)$ would lead to only a negligible savings in the estimates here.}, for positive integer values of
$r$. The key technical result we shall need is Lemma \ref{Q0} below. 
This is a relatively straightforward consequence of Lemma \ref{first} below, which relies on the work on linear equations in primes of the second and fourth authors and Ziegler.

We turn to the details.  Let $y$ be a sufficiently large quantity (which goes to infinity for the purposes of asymptotic notation), and let $x$ be a quantity that goes to infinity at a slightly slower rate than $y$; for sake of concreteness we will impose the hypotheses
\begin{equation}\label{xy}
x \sqrt{\log x} \leq y \leq x \log x.
\end{equation}
In fact the analysis in this section would apply under the slightly weaker hypotheses $y \log^{-O(1)} y \leq x \leq o(y)$, but we will stick with \eqref{xy} for sake of concreteness since this condition will certainly be satisfied when applying the results of this section to prove Theorem \ref{mainthm-y}.
From \eqref{xy} we see in particular that $\log y \asym \log x$, so we will use $\log x$ and $\log y$ more or less interchangeably in what follows.
Let $\PP$ denote the set of all primes in the interval $(x/2,x]$, and $\QQ$ denote the set of all primes in the interval $(x/4, y]$; thus from the prime number theorem we have
\begin{equation}\label{pq}
\# \PP \asym \frac{x}{2\log x}; \quad \# \QQ \asym \frac{y}{\log x}.
\end{equation}
In other words, $\PP$ and $\QQ$ both have density $\asym \frac{1}{\log x}$ inside $(x/2,x]$ and $(x/4,y]$ respectively.

Let $r \geq 1$ be a fixed natural number.  We define a relation $\rel$
between $\PP$ and $\QQ$  as follows: if $p \in \PP$ and $q \in \QQ$,
we write $p \rel q$ if the entire arithmetic progression $\{ q, q+r!
p, \dots, q+(r-1)r! p \}$ is contained inside $\QQ$.   
One may think of the $r$ relations $p \rel q-ir!p$ for $i=0,\dots,r-1$
as defining $r$ different (but closely related) bipartite graphs
between $\PP$ and $\QQ$.  Note that if $p \rel q$, then the residue
class $q\ \pmod{p}$ is guaranteed to contain at least $r$ primes from
$\QQ$, which is the main reason why we are interested in these
relations (particularly for somewhat large values of $r$).

For our main argument, we will be interested in the typical degrees of the bipartite graphs associated to the relations $p \rel q-ir!p$.  Specifically, we are interested\footnote{Actually, for technical reasons we will eventually replace the relation $\rel$ by slightly smaller relation $\relr$, which will in turn be randomly refined to an even smaller relation $\relra$; see below.} in the following questions for a given $0 \leq i \leq r-1$:

\begin{itemize}
\item[(i)] For a typical $p \in \PP$, how many $q \in \QQ$ are there such that $p \rel q-ir!p$?  (Note that the answer to this question does not depend on $i$.)
\item[(ii)]  For a typical $q \in \QQ$, how many $p \in \PP$ are there such that $p \rel q-ir!p$?
\end{itemize}

If $\PP$ and $\QQ$ were distributed randomly inside the intervals $(x/2,x]$ and $(x/4,y]$ respectively, with cardinalities given by \eqref{pq}, then standard probabilistic arguments (using for instance the Chernoff inequality) would suggest that the answer to question (i) is $\asym\frac{y}{\log^r x}$, while the answer to question (ii) is $\asym\frac{x}{2 \log^r x}$.  As it turns out, the local structure of the primes (for instance, the fact that all the elements of $\PP$ and $\QQ$ are coprime to $r!$) will bias the answers to each of these two questions; however (as one may expect from double counting considerations), they will be biased by exactly the same factor $\alpha_r$ (defined in \eqref{alpha-r-def} below), and the net effect of this bias will cancel itself out at the end of the proof of Theorem \ref{mainthm-y}.

One can predict the answers to Questions (i) and (ii) using the Hardy-Littlewood prime tuples conjecture \cite{hl}.  If we apply this conjecture (and ignore any issues as to how uniform the error term in that conjecture is with respect to various parameters), one soon arrives\footnote{See also Sections \ref{first-lemma-sec}, \ref{second-lemma-sec} for some closely related computations.} at the prediction that the answer to Question (i) should be $\asym\alpha_r \frac{y}{\log^r x}$ for all $p \in \PP$, and similarly the answer to Question (ii) should be $\asym \alpha_r \frac{x}{2\log^r x}$ for all $q \in \QQ$, where for the rest of the paper $\alpha_r$ will denote the singular series
\be\label{alpha-r-def}
 \alpha_r := \prod_{p \leq r} \left(\frac{p}{p-1}\right)^{r - 1} \prod_{p > r}\frac{(p-r)p^{r-1}}{(p-1)^r}.
\ee
The exact form of $\alpha_r$ is not important for our argument, so long as it is finite, positive, and does not depend on $x$ or $y$; but these claims are clear from \eqref{alpha-r-def} (note that the second factor $\frac{(p-r)p^{r-1}}{(p-1)^r}$ is non-zero and behaves asymptotically as $1+O(1/p^2)$).  As mentioned previously, this quantity will appear in two separate places in the proof of Theorem \ref{mainthm-y}, but these two occurrences will eventually cancel each other out. 

The Hardy-Littlewood conjecture is still out of reach of current technology.  Note that even the much weaker question as to whether the relation $p \rel q$ is satisfied for at least \emph{one} pair of $p$ and $q$ for any given $r$ is at least as hard as establishing that the primes contain arbitrarily long arithmetic progressions, which was only established by the second and fourth authors in \cite{gt-thm}.  However, for the argument used to prove Theorem \ref{mainthm-y}, it will suffice to be able to answer Question (i) for \emph{almost all} $p \in \PP$ rather than \emph{all} $p \in \PP$, and similarly for Question (ii).  In other words, we only need (a special case of) the Hardy-Littlewood prime conjecture ``on average''.  This is easier to establish; for instance, Balog \cite{balog} was able to use the circle method (or ``linear Fourier analysis'') to establish the prime tuples conjecture for ``most'' tuples in some sense.  The results in \cite{balog} are not strong enough for our applications, because of our need to consider arbitrarily long arithmetic progressions (which are well-known to not be amenable to linear Fourier-analytic methods for $r \geq 4$, see \cite{gowers-4}) rather than arbitrary prime tuples.  Instead we will use (a modification of) the more recent work of the second and fourth authors \cite{gt-linearprimes}.   More precisely, we claim the following bounds.

\begin{lem}\label{first}  Let $x,y,r,\PP,\QQ$, and $\rel$ be as above.   Let $0 \leq i \leq r-1$.
\begin{itemize}
\item[(i)] For all but $o(x/\log x)$ of the $p \in \PP$, we have the estimate
\[ \# \{ q \in \QQ: p \rel q-ir!p \} \asym  \alpha_r \frac{y}{\log^r x}.\]
\item[(ii)] For all but $o(y/\log x)$ of the $q \in \QQ$, we have
\[ \# \{ p \in \PP : p \rel q-ir!p \} \asym  \alpha_r  \frac{x}{2\log^r x}.\]
\item[(iii)]  For \emph{all} $p \in \PP$, we have the upper bounds
\[ \# \{ q \in \QQ: p \rel q-ir!p \} \ll \frac{y}{\log^r x}.\]
\item[(iv)]  For \emph{all} $q \in \QQ$, we have the upper bounds
\[ \# \{ p \in \PP : p \rel q-ir!p \} \ll \frac{x}{2\log^r x}.\]
\end{itemize}
\end{lem}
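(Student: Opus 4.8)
The plan is to reduce all four parts of Lemma \ref{first} to a single asymptotic for a sum over $\PP \times \QQ$ that is amenable to the Green--Tao--Ziegler machinery, and then deduce the ``almost all'' statements (i), (ii) from the exact average via a second-moment (variance) argument. Fix $0 \le i \le r-1$ and set, for $p \in \PP$, $q \in \QQ$, the indicator $N_i(p,q) := 1$ if the progression $q-ir!p, q-ir!p+r!p, \dots, q-ir!p+(r-1)r!p$ lies entirely in $\QQ$ (i.e.\ $p \rel q-ir!p$), and $0$ otherwise. First I would observe that $\sum_{q \in \QQ} N_i(p,q)$ counts, for fixed $p$, the number of length-$r$ arithmetic progressions of primes in $(x/4,y]$ with common difference $r!p$; this is exactly the kind of count controlled by the main theorem of \cite{gt-linearprimes}. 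Applying that result (with the system of $r$ linear forms $q \mapsto q, q+r!p, \dots, q+(r-1)r!p$, whose finite complexity is bounded in terms of $r$ only, and noting $r!p \asym x$ so the coefficients are of polynomial size in the relevant range), one obtains for \emph{each fixed} $p \in \PP$ coprime to $r!$ the asymptotic $\sum_{q \in \QQ} N_i(p,q) \asym \alpha_r \frac{y}{\log^r x}$, with the local factor $\alpha_r$ arising as the product of the archimedean volume factor and the local densities $\prod_p \beta_p$, exactly as in \eqref{alpha-r-def}; the factor at primes $p' \le r$ comes from the divisibility constraint forced by $r! \mid$ (common difference), and at $p' > r$ from the standard prime-tuples local density $\frac{(p'-r)p'^{r-1}}{(p'-1)^r}$ adjusted by the $\left(\frac{p'}{p'-1}\right)^{r-1}$ normalization. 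This gives (i) for \emph{every} $p$, which is stronger than needed, and also (iii) as an immediate corollary (indeed the main term \emph{is} the bound, up to constants).

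For (ii) and (iv) I would argue by double counting combined with uniformity in the error terms. Summing the asymptotic from the previous paragraph over all $p \in \PP$ gives
\[
\sum_{p \in \PP} \sum_{q \in \QQ} N_i(p,q) \asym \# \PP \cdot \alpha_r \frac{y}{\log^r x} \asym \alpha_r \frac{xy}{2 \log^{r+1} x},
\]
using \eqref{pq}; here I need the error term in the Green--Tao--Ziegler asymptotic to be uniform as $p$ ranges over $\PP$, which it is, since the relevant linear forms have coefficients bounded polynomially in $x$ and the implied $o(1)$ in that theorem is uniform over such bounded-coefficient systems (this is where one must be slightly careful and cite the quantitative form). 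Interchanging the order of summation, $\sum_{q \in \QQ} \big( \sum_{p \in \PP} N_i(p,q) \big) \asym \alpha_r \frac{xy}{2\log^{r+1}x} \asym \# \QQ \cdot \alpha_r \frac{x}{2 \log^r x}$. To upgrade this average over $q$ to a statement valid for almost all $q$, I would compute the second moment $\sum_{q \in \QQ} \big( \sum_{p \in \PP} N_i(p,q) \big)^2$: this is a count of pairs $(p,p')$ together with a $q$ such that both length-$r$ progressions with common differences $r!p$ and $r!p'$ start near $q$; for $p \ne p'$ this is governed by a system of $2r-1$ linear forms (sharing the variable $q$), again of bounded complexity, and the Green--Tao--Ziegler asymptotic gives $\asym \alpha_r^{(2)} \frac{x^2 y}{4 \log^{2r+1} x}$ for the off-diagonal, where the two-variable singular series $\alpha_r^{(2)}$ is precisely $\alpha_r^2 / (\text{the local overcount at primes} \le \text{something})$ — in fact a short local computation shows $\alpha_r^{(2)}$ works out so that the second moment matches $\big(\alpha_r \frac{x}{2\log^r x}\big)^2 \# \QQ$ up to $o$ of itself; the diagonal $p = p'$ contributes only $\sum_q \sum_p N_i(p,q) \ll \frac{x^2 y}{\log^{r+1} x} = o(x^2 y / \log^{2r+1}x)$ since $r \ge 1$ (and is negligible since $\log^{r-1} x \to \infty$ for $r \ge 2$; for $r=1$ the lemma is essentially the prime number theorem in $(x/4,y]$ and can be handled separately). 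Chebyshev's inequality then yields (ii) for all but $o(y/\log x)$ of $q$. Finally (iv) follows from (ii) together with the trivial bound $\sum_{p \in \PP} N_i(p,q) \le \#\{p : q-ir!p \in \QQ \text{ and } q-ir!p \equiv q \}$ combined with an elementary divisor-type argument, or more simply by noting that $\sum_{p} N_i(p,q) \le \#\{ p \in \PP : q + r! p \in \QQ\}$, which is $\le \#\{p \le x : q+r!p \text{ prime}\} \ll x/\log^2 x \le x/\log^r x$ uniformly in $q$ by an upper-bound sieve (Selberg or Brun) — this handles all $q$ at once.

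I expect the main obstacle to be the \emph{uniformity of the error term} in the linear-equations-in-primes asymptotic with respect to the parameter $p$ (equivalently, with respect to the size of the coefficients of the linear forms), since the cleanest statements of the Green--Tao--Ziegler results are often phrased for a fixed system; one must either invoke a version with explicit (even if weak, e.g.\ $\log$-power) error terms, or rerun the transference/von Neumann argument tracking the dependence on coefficient size, which in the range $r!p \asym x \asym y^{1+o(1)}$ is harmless. A secondary technical point is the bookkeeping to verify that the one-variable and two-variable singular series are related by $\alpha_r^{(2)} = \alpha_r^2(1+o(1))$ in the relevant averaged sense, which amounts to checking that the Euler products over local densities multiply correctly — routine but requiring care at the primes $p' \le r$ where the $r!$ in the common difference interacts with the local conditions. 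Everything else is double counting and a variance computation.
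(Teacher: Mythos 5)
There is a fatal flaw in your treatment of part (i), and several smaller but real gaps elsewhere.

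\textbf{The $p$-fixed system has infinite complexity.} You propose to prove (i) for \emph{every} $p \in \PP$ by fixing $p$ and applying the main theorem of \cite{gt-linearprimes} to the one-variable system $q \mapsto (q, q+r!p, \dots, q+(r-1)r!p)$. But the homogeneous parts of all $r$ forms are identically $q \mapsto q$, so each $\dot\psi_i$ is a scalar multiple of every other $\dot\psi_j$. This violates the finite complexity hypothesis, and the Green--Tao theorem gives nothing. Indeed, what you are asking for is exactly the Hardy--Littlewood $r$-tuple conjecture for the specific shifts $jr!p$, which is open; no version of \cite{gt-linearprimes} proves it. This is the reason the lemma is stated only for almost all $p$, and the paper proves it (in Section~\ref{first-lemma-sec}) via a \emph{variance} over $p$: one bounds $\sum_{m,n_1} |F(m,n_1)|^2 \Lambda(n_1)$ where $F(m,n_1)$ is the deviation of the $q$-count from $\alpha_r \cdot \#I(m,n_1)$, and then invokes Chebyshev. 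To expand the square requires Theorem~A-type input for systems in \emph{two or three variables} (with $n_1$ ranging over $(x/2,x]$ and $n_2,n_3$ over a block), and there the homogeneous parts $n_2 + jr!n_1$ are genuinely distinct. Your ``(i) holds for every $p$'' claim propagates to make (iii) circular; (iii) in fact requires an independent upper-bound sieve, as the paper states.

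\textbf{The elongated body forces a blocking decomposition and then a genuinely new theorem.} Your variance computation for (ii) runs over $(q,p,p') \in (x/4,y] \times (x/2,x]^2$, a box of volume $\asymp yx^2$. Since $y/x \asymp \log x$, to fit this box inside $[-N,N]^3$ one must take $N \asymp y$, and then the error term $o_{t,d,L}(N^3) = o(y^3)$ from Theorem~A swamps the main term $\asymp yx^2$ by a factor $(y/x)^2 \asymp \log^2 x$. The paper's remedy (Sections~\ref{first-lemma-sec}--\ref{second-lemma-sec}) is to partition the long variable into blocks $I(m)$ of length $x$, shift by $mx$, and work with $N \asymp x$. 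But then the shifted forms acquire a constant term $mx$ which can be as large as $y \asymp x\log x$, so $|\psi_i(0)|/N$ is of order $\log x$ rather than $O(1)$, and the hypothesis $\|\Psi\|_N \leq L$ of Theorem~A fails. This is the entire raison d'\^etre of Theorem~\ref{dickson-shifted} and Appendix~\ref{linear-primes-app}: one must re-run the Green--Tao argument allowing $|\psi_i(0)| \ll N\log^B N$. You gesture at a uniformity concern in your final paragraph, but you identify the wrong obstruction (coefficient size rather than the mismatch between $\vol(K)$ and $N^d$, and the resulting large constant terms), and you do not realize that a modification of \cite{gt-linearprimes} itself is required rather than merely a careful citation.

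\textbf{The sieve bound for (iv) is misstated.} You write $\#\{p \leq x : q+r!p \text{ prime}\} \ll x/\log^2 x \leq x/\log^r x$; the last inequality is backwards for $r > 2$. To get the claimed $\ll x/\log^r x$ you must sieve the full system of $r$ forms $p, q + (1-i)r!p, \dots, q+(r-1-i)r!p$ simultaneously (the paper cites \cite{HR}, \cite{FI} for this). The two-form bound $x/\log^2 x$ is too weak.

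In short: your overall instinct (use \cite{gt-linearprimes} plus a second-moment argument, with the singular series computed via local densities) is the right one and matches the paper at a high level, but the specific route --- fixing $p$ in part (i), and applying the theorem over an elongated body without blocking in part (ii) --- does not work, and the central technical step of the paper (allowing constant terms of size $N\log^B N$) is missing.
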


Parts (iii) and (iv) follow from standard sieve-theoretic methods (e.g. the Selberg sieve); we omit the proof here, referring the reader instead\footnote{One could also deduce these bounds from Proposition 6.4' in Appendix \ref{linear-primes-app}.} to \cite{HR} or \cite{FI}.  The more interesting bounds are (i) and (ii).
As stated above, these two claims are \emph{almost} relatively straightforward consequences of the main result of the paper \cite{gt-linearprimes} of the second and fourth authors. However, some modifications of that work are required to deal with the fact that $x$ and $y$ are of somewhat different sizes. In Section \ref{dickson-shifted-sec} below we state the modified version of the main result of \cite{gt-linearprimes} that we need, Theorem \ref{dickson-shifted}. The deductions of parts (i) and (ii) of Lemmas \ref{first} are
 rather similar to one another, and are given in Sections \ref{first-lemma-sec} and \ref{second-lemma-sec} respectively. Finally, a proof of Theorem \ref{dickson-shifted} can be obtained by modifying the arguments of \cite{gt-linearprimes} in quite a straightforward manner, but in a large number of places. We record these modifications in Appendix  \ref{linear-primes-app}.

As presently defined, it is possible for the bipartite graphs given by the $p \rel q-ir!p$ to overlap, thus it may happen that $p \rel q-ir!p$ and $p \rel q-jr!p$ for some $p \in \PP$, $q \in \QQ$, and $0 \leq i < j \leq r-1$.  For instance, this situation will occur if $\QQ$ has an arithmetic progression $q,q+r!p,\dots,q+r \times r!p$ of length $r+1$ with $p \in \PP$.  For technical reasons, such overlaps are undesirable for our applications.  However, these overlaps are rather rare and can be easily removed by the following simple device.  We define the modified relation $\relr$ between $\PP$ and $\QQ$ by declaring $p \relr q$ if the progression $\{q, q+r!p,\dots,q+(r-1)r!p\}$is contained inside $\QQ$, but $q+r \times r! p$ does \emph{not} lie in $\QQ$.  From construction we have the following basic fact:

\begin{lem}\label{disj} For any $p \in \PP$ and $q \in \QQ$ there is at most one $0 \leq i \leq r-1$ such that $p \relr q - ir!p$.
\end{lem}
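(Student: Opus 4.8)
The statement of Lemma \ref{disj} is essentially a tautology unravelling the definition of $\relr$, so the proof plan is short. First I would suppose, for contradiction, that there exist $p \in \PP$, $q \in \QQ$, and two distinct indices $0 \leq i < j \leq r-1$ with $p \relr q - ir!p$ and $p \relr q - jr!p$. Writing $q_i := q - ir!p$ and $q_j := q - jr!p$, the definition of $\relr$ applied to the pair $(p, q_i)$ tells us that the progression $\{q_i, q_i + r!p, \dots, q_i + (r-1)r!p\}$ lies in $\QQ$ but $q_i + r\cdot r!p \notin \QQ$.

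The key observation is that $q_i + r \cdot r! p = q + (r - i) r! p$, and since $i < j$ we have $1 \leq r - i \leq r$; in fact $r - i \geq r - j + 1 \geq 1$. On the other hand, the relation $p \relr q - jr!p$, i.e. $p \relr q_j$, asserts in particular that the progression $\{q_j, q_j + r!p, \dots, q_j + (r-1)r!p\}$ is contained in $\QQ$. Now $q_j + (r - i - 1 + j - j) \cdot$—let me phrase it directly: the terms of the progression starting at $q_j$ are $q_j + k r! p = q - jr!p + kr!p = q + (k-j)r!p$ for $k = 0, 1, \dots, r-1$, i.e. they are $q + m r! p$ for $m = -j, -j+1, \dots, r-1-j$. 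In particular, taking $m = r - i$ requires $-j \leq r - i \leq r - 1 - j$, i.e. $i \geq j$ — which fails. So this direct substitution does not immediately land the point $q + (r-i)r!p$ inside the progression based at $q_j$; I should instead argue via the progression based at $q_i$ itself.

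The clean route: the element $q + (r-i)r!p = q_i + r \cdot r! p$ is, by $p \relr q_i$, \emph{not} in $\QQ$. But I claim it also equals one of the first $r$ terms of a progression forced into $\QQ$ by $p \relr q_j$ — no. Let me reconsider once more and use the hypothesis in the symmetric way that actually works: from $p \relr q_i$ we get $q_i, q_i + r!p, \ldots, q_i + (r-1)r!p \in \QQ$, which says $q, q+r!p, \ldots$ — in terms of $q$, the points $q - ir!p, q-(i-1)r!p, \ldots, q + (r-1-i)r!p$ all lie in $\QQ$. Similarly $p \relr q_j$ gives $q - jr!p, \ldots, q+(r-1-j)r!p \in \QQ$. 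Since $i < j$, the second list extends the first further to the left (down to $q - jr!p$), and so in particular $q - ir!p - r!p = q - (i+1)r!p$ lies in $\QQ$ (as $i+1 \leq j$). But now applying the "non-extension" clause of $p \relr q_i$ in the downward direction is not part of the definition — the definition only forbids extension upward. So instead I exploit $p \relr q_j$'s upper endpoint: $q_j + r \cdot r!p = q + (r-j)r!p \notin \QQ$, yet from $p \relr q_i$ with $i < j$ we have $q + m r!p \in \QQ$ for all $-i \leq m \leq r-1-i$, and $r - j \leq r - 1 - i$ iff $i \leq j - 1$, which holds, while $r - j \geq -i$ trivially; hence $q + (r-j)r!p \in \QQ$, contradicting $q_j + r\cdot r!p \notin \QQ$. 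This contradiction completes the proof. The only thing to check carefully is the index bookkeeping $-i \leq r - j \leq r - 1 - i$, which follows from $0 \leq i < j \leq r - 1$; there is no real obstacle, just the need to get the inequalities pointing the right way.
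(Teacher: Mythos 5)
Your final argument is correct and is exactly the intended reasoning: for $i<j$, the point $q+(r-j)r!p$ excluded from $\QQ$ by $p \relr q-jr!p$ satisfies $-i \leq r-j \leq r-1-i$ and is therefore forced into $\QQ$ by $p \relr q-ir!p$, a contradiction. The paper states the lemma without proof as immediate ``from construction,'' so your task was only to spell out the index bookkeeping, which you do correctly; the write-up would read better with the two abandoned attempts (including the first, correctly-discarded try with the non-extension clause of $p\relr q-ir!p$) deleted and only the clean route presented.
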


We can then modify Lemma \ref{first} slightly by replacing the relation $\rel$ with its slightly perturbed version $\relr$:

\begin{lem}\label{Q0-a}  Let $x,y,r,\PP,\QQ$, and $\relr$ be as above.   Let $0 \leq i \leq r-1$.
\begin{itemize}
\item[(i)] For all but $o(x/\log x)$ of the $p \in \PP$, we have the estimate
\[ \# \{ q \in \QQ: p \relr q-ir!p \} \asym  \alpha_r \frac{y}{\log^r x}.\]
\item[(ii)] For all but $o(y/\log x)$ of the $q \in \QQ$, we have
\[ \# \{ p \in \PP : p \relr q-ir!p \} \asym  \alpha_r  \frac{x}{2\log^r x}.\]
\item[(iii)]  For \emph{all} $p \in \PP$, we have the upper bounds
\[ \# \{ q \in \QQ: p \relr q-ir!p \} \ll \frac{y}{\log^r x}.\]
\item[(iv)]  For \emph{all} $q \in \QQ$, we have the upper bounds
\[ \# \{ p \in \PP : p \relr q-ir!p \} \ll \frac{x}{2\log^r x}.\]
\end{itemize}
\end{lem}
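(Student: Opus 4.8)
The plan is to derive Lemma~\ref{Q0-a} from Lemma~\ref{first} by showing that passing from $\rel$ to $\relr$ changes each of the four quantities by a negligible amount. The key observation is that the difference between the two relations is controlled by the event that $\QQ$ contains a \emph{longer} progression: for fixed $p$ and $q$, we have $p \rel q - ir!p$ but not $p \relr q - ir!p$ precisely when the progression $\{q - ir!p, \dots, q + (r-1-i)r!p\}$ lies in $\QQ$ \emph{and} $q + (r-i)r!p$ also lies in $\QQ$, i.e.\ when there is an $(r+1)$-term progression of primes in $\QQ$ with common difference $r!p$ passing through both $q$ and $p$ in the appropriate way. Thus
\[
\#\{q \in \QQ : p \rel q - ir!p\} - \#\{q \in \QQ : p \relr q - ir!p\} \le \#\{q \in \QQ : \{q, q+r!p, \dots, q + r\times r!p\} \subseteq \QQ\},
\]
and similarly with the roles of $p$ and $q$ swapped (the right-hand side being independent of $i$ in each case).

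First I would bound these error terms. By the standard upper-bound sieve (the same input used for parts (iii), (iv) of Lemma~\ref{first}, applied now to a system of $r+1$ linear forms rather than $r$), the count of $(r+1)$-term progressions $q, q+r!p, \dots, q+r\times r!p$ with $p\in\PP$, $q\in\QQ$ is $\ll \frac{xy}{\log^{r+2}x}$, which is a factor $\asym \frac{1}{\log x}$ smaller than the total count $\asym \alpha_r\frac{xy}{2\log^{r+1}x}$ of $r$-term progressions. Consequently, by Markov's inequality, the number of $p\in\PP$ for which $\#\{q\in\QQ : \{q,\dots,q+r\times r!p\}\subseteq\QQ\}$ exceeds, say, $\frac{y}{\log^{r+1/2}x}$ is $o(x/\log x)$; and likewise the number of bad $q$ is $o(y/\log x)$. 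Hence for all but $o(x/\log x)$ of the $p\in\PP$, the quantity in (i) changes by at most $\frac{y}{\log^{r+1/2}x} = o\!\left(\frac{y}{\log^r x}\right)$ when we replace $\rel$ by $\relr$, and similarly for (ii). Intersecting this good set of $p$ with the good set from Lemma~\ref{first}(i) (a set of full density up to $o(x/\log x)$ losses), the asymptotic $\asym \alpha_r\frac{y}{\log^r x}$ is preserved; the argument for (ii) is identical with $x \leftrightarrow y$.

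For parts (iii) and (iv), the deduction is even simpler: since $\relr$ is a sub-relation of $\rel$ (if $p\relr q$ then $p\rel q$), the sets being counted in Lemma~\ref{Q0-a}(iii),(iv) are subsets of those in Lemma~\ref{first}(iii),(iv), so the upper bounds are inherited with no change at all. The main obstacle, such as it is, is purely bookkeeping: one must make sure the exceptional set of $p$ (resp.\ $q$) on which the error term is not genuinely small is still $o(x/\log x)$ (resp.\ $o(y/\log x)$), and that the threshold chosen (here $\log^{r+1/2}x$, or indeed any power between $\log^r x$ and $\log^{r+1}x$) is simultaneously large enough for Markov's inequality to give a small exceptional set and small enough to be absorbed into the $o(\cdot)$ error in the asymptotic. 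Both requirements hold comfortably because of the genuine $\frac{1}{\log x}$ gain coming from the extra linear form, so no new analytic input beyond the sieve bound and Lemma~\ref{first} is needed.
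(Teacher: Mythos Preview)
Your proposal is correct and follows the same core idea as the paper: bound the discrepancy between $\rel$ and $\relr$ by counting $(r+1)$-term progressions, and inherit (iii), (iv) trivially from the subrelation property. The one simplification you miss is that Markov's inequality is unnecessary: the upper-bound sieve (i.e.\ Lemma~\ref{first}(iii) with $r$ replaced by $r+1$) already gives the \emph{pointwise} bound
\[
\#\{q\in\QQ : p \rel q-ir!p \text{ but } p \not\relr q-ir!p\} \ll \frac{y}{\log^{r+1} x}
\]
for \emph{every} $p\in\PP$, so the error term is $o(y/\log^r x)$ uniformly and one may combine it directly with Lemma~\ref{first}(i) via the triangle inequality, with no additional exceptional set; the argument for (ii) is symmetric.
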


\begin{proof}  Parts (iii) and (iv) are immediate from their counterparts in Lemma \ref{first}, since $\relr$ is a subrelation of $\rel$.  To prove (i), we simply observe from Lemma \ref{first}(iii) (with $r$ replaced by $r+1$) that
\[ \# \{ q \in \QQ: p \rel q-ir!p \hbox{ but } p \not \relr q-ir!p \} \ll \frac{y}{\log^{r+1} x},\]
and the claim then follows from Lemma \ref{first}(i) and the triangle inequality.  The claim (ii) is proven similarly.
\end{proof}

For technical reasons, it will be convenient to reformulate the main results of Lemma \ref{Q0-a} as follows.

\begin{lem}\label{Q0}  Let $x,y,r,\PP,\QQ$, and $\relr$ be as above.   Then there exist subsets $\PP_0$, $\QQ_0$ of $\PP, \QQ$ respectively with
\begin{equation}\label{pq0}
\# \PP_0 \asym \frac{x}{2\log x}; \quad \# \QQ_0 \asym \frac{y}{\log x},
\end{equation}
such that
\begin{equation}\label{q0-i}
 \# \{ q \in \QQ: p \relr q-ir!p \} \asym  \alpha_r \frac{y}{\log^r x}
\end{equation}
for \emph{all} $p \in \PP_0$ and $0 \leq i \leq r-1$, and similarly that
\begin{equation}\label{q0-ii}
 \# \{ p \in \PP_0 : p \relr q-ir!p \} \asym  \alpha_r  \frac{x}{2\log^r x}
\end{equation}
for \emph{all} $q \in \QQ_0$ and $0 \leq i \leq r-1$.
\end{lem}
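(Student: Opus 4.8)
The plan is to deduce Lemma~\ref{Q0} from Lemma~\ref{Q0-a} by a routine ``clean-up'' argument: the estimates in Lemma~\ref{Q0-a}(i),(ii) hold for all but a negligible proportion of $p\in\PP$ (respectively $q\in\QQ$) for each fixed $i$, and since there are only $r$ values of $i$ to consider (and $r$ is fixed), we may intersect the finitely many exceptional sets and still discard only $o(x/\log x)$ primes $p$ and $o(y/\log x)$ primes $q$. The main subtlety is that the bound \eqref{q0-ii} must count $p$ in the \emph{restricted} set $\PP_0$, not all of $\PP$, so the two cleaning steps cannot be done independently; one must first fix $\PP_0$, then show that removing the $o(x/\log x)$ bad primes from $\PP$ does not disturb the count in \eqref{q0-ii} for most $q$.

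Concretely, I would proceed as follows. First, let $\PP_1\subseteq\PP$ be the set of $p$ for which \eqref{q0-i} holds for every $0\le i\le r-1$; by Lemma~\ref{Q0-a}(i) and the union bound over the $r$ values of $i$, we have $\#(\PP\setminus\PP_1)=o(x/\log x)$, hence $\#\PP_1\asym x/(2\log x)$ by \eqref{pq}. Second, I want to choose $\QQ_0\subseteq\QQ$ so that \eqref{q0-ii} holds with $\PP_0:=\PP_1$. For each fixed $i$, Lemma~\ref{Q0-a}(ii) gives a set of exceptional $q$ of size $o(y/\log x)$ for which $\#\{p\in\PP:p\relr q-ir!p\}$ fails to be $\asym\alpha_r x/(2\log^r x)$; I additionally need to control the contribution of the discarded primes $\PP\setminus\PP_1$. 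Here I would double-count:
\[
\sum_{q\in\QQ}\#\{p\in\PP\setminus\PP_1:p\relr q-ir!p\}=\sum_{p\in\PP\setminus\PP_1}\#\{q\in\QQ:p\relr q-ir!p\}\ll \#(\PP\setminus\PP_1)\cdot\frac{y}{\log^r x}=o\!\left(\frac{xy}{\log^{r+1}x}\right),
\]
using Lemma~\ref{Q0-a}(iii) for the inner count. By Markov's inequality, the number of $q\in\QQ$ for which $\#\{p\in\PP\setminus\PP_1:p\relr q-ir!p\}$ exceeds, say, $\frac{1}{\log x}\cdot\frac{x}{2\log^r x}$ is $o(y/\log x)$. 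Hence for all but $o(y/\log x)$ values of $q$ (again unioning over the $r$ choices of $i$), we have both $\#\{p\in\PP:p\relr q-ir!p\}\asym\alpha_r x/(2\log^r x)$ and $\#\{p\in\PP\setminus\PP_1:p\relr q-ir!p\}=o(x/\log^r x)$, so subtracting gives $\#\{p\in\PP_1:p\relr q-ir!p\}\asym\alpha_r x/(2\log^r x)$.

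Third, I would similarly let $\QQ_1\subseteq\QQ$ be the set of $q$ for which \eqref{q0-i} holds for all $i$ (from Lemma~\ref{Q0-a}(i), wait --- no: \eqref{q0-i} is a statement about $p$). Let me restate: set $\QQ_0$ to be the intersection of $\QQ_1':=\{q\in\QQ: \eqref{q0-ii}\text{ holds with }\PP_0=\PP_1\text{ for all }i\}$ (shown above to be co-$o(y/\log x)$) with any further co-$o(y/\log x)$ set needed; since $\#\QQ_0\asym y/\log x$ by \eqref{pq}, and $\PP_0:=\PP_1$, both \eqref{q0-i} (for all $p\in\PP_0$) and \eqref{q0-ii} (for all $q\in\QQ_0$) now hold, completing the proof. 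The only genuine obstacle is the bookkeeping in the double-counting step above --- ensuring that restricting from $\PP$ to $\PP_0$ in \eqref{q0-ii} loses nothing for most $q$ --- but the upper bounds in Lemma~\ref{Q0-a}(iii) make this entirely routine.
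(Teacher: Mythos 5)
Your proof follows exactly the same route as the paper: construct $\PP_0=\PP_1$ from Lemma~\ref{Q0-a}(i), deal with the restriction to $\PP_0$ in \eqref{q0-ii} by the double-counting identity combined with the upper bound of Lemma~\ref{Q0-a}(iii), and finish with Markov's inequality. However, there is a genuine arithmetic slip in your Markov step. Dividing the total count $o\bigl(xy/\log^{r+1}x\bigr)$ by your chosen threshold $\frac{1}{\log x}\cdot\frac{x}{2\log^r x}=\frac{x}{2\log^{r+1}x}$ yields $o(y)$ exceptional $q$, \emph{not} $o(y/\log x)$ as you assert; since $\#\QQ\ll y/\log x$, a bound of $o(y)$ on the exceptional set is vacuous. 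The correct way to extract the conclusion is to fix any constant $\eps>0$ and apply Markov with the threshold $\eps\,x/\log^r x$, which gives at most $\frac{1}{\eps}\cdot o(y/\log x)=o(y/\log x)$ exceptional $q$; since $\eps>0$ is arbitrary, it follows (after the usual diagonalization implicit in the $o(\cdot)$ notation) that $\#\{p\in\PP\setminus\PP_1: p\relr q-ir!p\}=o(x/\log^r x)$ for all but $o(y/\log x)$ values of $q\in\QQ$, which is what you need. With this minor repair your argument coincides with the paper's proof.
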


\begin{proof}  From Lemma \ref{Q0-a}(i) we may already find a subset $\PP_0$ of the desired cardinality obeying \eqref{q0-i}.  If the $\PP_0$ in \eqref{q0-ii} were replaced by $\PP$, then a similar argument using Lemma \ref{Q0-a}(ii) (and taking the union bound for the exceptional sets for each $0 \leq i \leq r-1$) would give the remainder of the lemma.  To deal with the presence of $\PP_0$ in \eqref{q0-ii}, it thus suffices to show that
$$
 \# \{ p \in \PP \backslash \PP_0 : p \relr q-ir!p \} = o\left( \frac{x}{\log^r x} \right)$$
for all but $o(y/\log x)$ of the $q \in \QQ$.  By Markov's inequality, it suffices to show that
$$
 \# \{ (p,q) \in (\PP \backslash \PP_0) \times \QQ : p \relr q-ir!p \} = o\left( \frac{x}{\log^r x} \times \frac{y}{\log x}\right).$$
But this follows by summing Lemma \ref{Q0-a}(iii) for all $p \in \PP \backslash \PP_0$, since the set $\PP \backslash \PP_0$ has cardinality $o(x/\log x)$.
\end{proof}

\section{Main construction}
%

We now begin the proof of Theorem \ref{mainthm-y}.  It suffices to establish the following claim:

\begin{thm}[First reduction]\label{first-red} Let $r \geq 13$ be an integer.  Take $x$ to be sufficiently large depending on $r$ (and going to infinity for the purposes of asymptotic notation), and then define $y$ by the formula
\be\label{ydef}
y := \frac{r}{6\log r}\, \frac{x\log x\log_3 x}{(\log_2 x)^2}.
\ee
Then there exists a residue class $a_s \pmod s$ for each
prime $s\le x$, such that the union of these classes
contains every positive integer less than or equal to $y$.
\end{thm}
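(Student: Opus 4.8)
The plan is to build the classes $a_s$ in four stages, on disjoint ranges of primes, and to reduce the statement to a probabilistic covering estimate fed by Lemma~\ref{Q0}. Fix $r\ge13$ and pick a threshold $z$ with $\log z\asymp\frac{\log x\,\log_3 x}{\log_2 x}$; the precise constant here, and the coefficient $\frac{r}{6\log r}$ in \eqref{ydef}, are what the estimates below pin down, and are the source of the hypothesis $r\ge13$. Then $x\sqrt{\log x}\le y\le x\log x$ for large $x$, so $\PP$, $\QQ$ and Lemma~\ref{Q0} apply. First I dispose of the ``easy'' part of $[y]$: put $a_p:=0$ for every prime $p\in(z,x/2]$ and $a_s:=0$ for every prime $s\le 2y/x$. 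Since $y\le x\log x$, an integer $n\in[y]$ surviving the first step is $z$-smooth or has the form $n=pm$ with $p\in(x/2,y]$ prime and $m=n/p\le 2y/x<z$; in the latter case, if $m\ge2$ then $n$ has a prime factor $\le m\le 2y/x$ and dies in the second step. So after these steps the only survivors are the primes of $(x/2,y]$, together with a set of size $O(\Psi(y,z)+\pi(z))=o(x/\log x)$ (products of primes from $(2y/x,z]$, and $1$): the choice of $z$ is exactly what makes the $z$-smooth count this small, and this set will be covered one class per element at the very end.

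Next, two random steps. For each prime $s\in(2y/x,z]$ choose $a_s$ uniformly and independently in $\Z/s\Z$, and let $V\subseteq\QQ$ be the random set of primes of $(x/2,y]$ not yet covered; each lies in $V$ with probability $\prod_{2y/x<s\le z}(1-1/s)\asymp\frac{\log_2 x}{\log z}$, so $|V|\asymp\frac{\log_2 x}{\log z}\cdot\frac{y}{2\log x}$ with probability $1-o(1)$, and the constants are arranged so that $|V|\asymp\frac{r}{\log r}\#\PP$. Discarding $o(x/\log x)$ further elements into the mop-up pile, we may also take $V\subseteq\QQ_0$, with $\PP_0,\QQ_0$ from Lemma~\ref{Q0}. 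Split $\PP_0=\PP'\sqcup\PP''$ into equal halves. For each $p\in\PP'$, choose uniformly at random one of the arithmetic progressions $\{q,q+r!p,\dots,q+(r-1)r!p\}$ that lie inside $V$ (there is typically no shortage, as we shall see; if there is none, set $a_p:=0$) and let $a_p$ be any of its members: then the class $a_p\ \pmod{p}$ covers all $r$ of them. Any still-unused prime gets class $0$. The theorem follows once we show that, with positive probability over the two random steps, the number of $v\in V$ not covered by the third step is at most $\#\PP''$; those, and the $o(x/\log x)$ reserved debris, are then each given a private prime of $\PP''$.

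To bound the uncovered part, condition on the small sieve (fixing $V$) and use the independence of the $a_p$, $p\in\PP'$. For fixed $v\in V$ and $p\in\PP'$, $\PR[v\text{ covered by }p]$ is at least the probability that $v$ lies in the progression chosen for $p$, that is, the number of length-$r$ progressions with common difference $r!p$ that lie in $V$ and pass through $v$, divided by the number that lie in $V$. \emph{If $V$ is suitably pseudorandom}, the denominator is $\asymp\alpha_r\frac{y}{\log^r x}(|V|/\#\QQ)^r$ --- the count of such progressions inside $\QQ$, given for $p\in\PP_0$ by Lemma~\ref{Q0}(i) (with $\rel$ and $\relr$ interchangeable up to lower order), ``thinned'' by the density $|V|/\#\QQ$ --- while the numerator is $\asymp\#\{0\le j\le r-1:p\rel v-jr!p\}\cdot(|V|/\#\QQ)^{r-1}$. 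Summing over $p\in\PP'$ and using Lemma~\ref{Q0}(ii) (valid since $v\in\QQ_0$) to evaluate $\sum_{p\in\PP_0}\#\{j:p\rel v-jr!p\}\asymp r\alpha_r\frac{x}{2\log^r x}$, one obtains
\[
 \sum_{p\in\PP'}\PR[v\text{ covered by }p]\ \gtrsim\ \frac{r\,x\,\#\QQ}{4\,y\,|V|},
\]
and the choice of parameters makes the right-hand side a positive constant multiple of $\log r$ (here the two occurrences of $\alpha_r$ cancel, as anticipated in Section~\ref{sec:prime-progressions}). Hence $\PR[v\text{ uncovered}]\ll r^{-c}$ for an absolute $c>1$, so $\E[\#\{v\in V\text{ uncovered}\}]\ll r^{1-c}\#\PP=o(\#\PP)$; for $r\ge13$ this sits below $\#\PP''$, and Markov's inequality then yields a deterministic configuration with the same property.

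The crux --- the step I expect to be hardest --- is the pseudorandomness of $V$ used above: that for all but $o(\#\PP)$ of the $p\in\PP_0$ (and all $v\in\QQ_0$ and all positions $j$), the number of length-$r$ progressions with common difference $r!p$ lying entirely in $V$, and the number of those through $v$ at position $j$, are $(1+o(1))$ times their expected values $\asymp\alpha_r\frac{y}{\log^r x}(|V|/\#\QQ)^r$ etc., with high probability over the random small sieve. I would establish this by computing the mean and the variance of these counts over the choices $a_s$, $2y/x<s\le z$, and showing the variance is of lower order. The difficulties are: (a) the events ``$q+jr!p\in V$'', $j=0,\dots,r-1$, are correlated whenever a prime $s$ divides a difference $(j-j')r!p$, but the factor $r!$ is inserted precisely so that every prime $\le r$ divides all these differences (contributing no extra constraint), primes in $(r,2y/x]$ were killed deterministically in the first stage, and primes in $(2y/x,z]$ merely supply the $(1-1/s)$ factors; (b) one must replace $\rel$ by $\relr$ (to keep the $r$ graphs disjoint, via Lemma~\ref{disj}) and then by the random refinement $\relra$ that records membership in $V$; and (c) the reference counts inside $\QQ$ that normalise everything are themselves only controlled for \emph{typical} $p,q$ through Lemma~\ref{Q0}, so the exceptional sets must be propagated carefully through the variance estimate. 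Granting these, the four-stage construction covers $[y]$, proving the theorem.
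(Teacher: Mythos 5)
Your four-stage architecture is the same as the paper's, and your ``pseudorandomness of $V$'' paragraph correctly identifies the key technical input (the paper's Lemma~\ref{sieveunited}, proved by the second-moment method). But there is a genuine quantitative gap in how you source the final mop-up primes, and it makes the constants fail for \emph{every} $r$.

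You set $a_s := 0$ for all primes $s\in(z,x/2]$, which consumes the band $(x/4,x/2]$. To pay for the final ``one prime per survivor'' step you then split $\PP_0=\PP'\sqcup\PP''$ into equal halves, sieving with $\PP'$ and mopping up with $\PP''$. Run the numbers. Because the disjoint sets $\PP_1(\vect{\mathbf{a}},q;i)$ each lose half their elements to $\PP''$, the exponent in the exponential drops from the full $\sum_{i}\sum_{p\in\PP_1(\vect{\mathbf{a}},q;i)}\frac{1}{\#\QQ(\vect{\mathbf{a}},p)}\asym\log r$ to $\asym\tfrac12\log r$ (this is exactly your formula $\frac{rx\#\QQ}{4y|V|}\asym\tfrac12\log r$, where the $4$ in the denominator comes from the halving). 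Hence $\PR[v\text{ uncovered}]\lesssim r^{-1/2}$, not $r^{-c}$ with $c>1$; that overclaim has no support in your estimate. Consequently the expected number of uncovered $v$ is $\asym |V|\,r^{-1/2}\asym\frac{r^{1/2}}{2\log r}\cdot\frac{x}{\log x}$, while $\#\PP''\asym\frac{x}{4\log x}$. You would need $r^{1/2}\le\tfrac12\log r$, which is false for all $r\ge1$. Increasing the fraction $\theta$ of $\PP_0$ you sieve with does not help: you would need $r^{1-\theta}\le(1-\theta)\log r$, i.e.\ $e^u\le u$ with $u=(1-\theta)\log r$, which is impossible. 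Even at $\theta=1$ the survival probability is only $\asym1/r$, so $\E[\text{uncovered}]\asym\frac{1}{2\log r}\frac{x}{\log x}$ is \emph{not} $o(\#\PP)$; it is a constant multiple of $\#\PP$, and the whole point of the choice $r\ge13$ in \eqref{ydef} is to make that constant fit under $1/4$.

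The paper avoids this by \emph{not} zeroing out $(x/4,x/2]$. Its $\cS_1$ is $\{s\le\log x\}\cup(z,x/4]$, leaving $\cS_4=(x/4,x/2]$ untouched and available as a reservoir of $\asym\frac{x}{4\log x}$ fresh primes for the mop-up, while \emph{all} of $\PP=(x/2,x]$ is used in the third sieving, so the exponent really is $\asym\log r$ and the expected survivors $\asym\frac{1}{2\log r}\frac{x}{\log x}$ fit under $\frac{x}{5\log x}<\frac{x}{4\log x}$ once $r\ge13$. That also accounts for why the paper defines $\QQ$ as the primes in $(x/4,y]$ rather than $(x/2,y]$: the progressions with difference $r!p$ passing through a prime $q$ just above $x/2$ need room below $x/2$ to land, and $(x/4,y]$ provides it. Your decomposition, with $V\subseteq(x/2,y]$ but progression members potentially in $(x/4,x/2]$, would also need attention on this point. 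A further, smaller issue: even if you keep the split, you would need the half $\PP'$ to inherit the Lemma~\ref{Q0}(ii) uniformity for essentially all $q$, which itself requires an averaging argument; the paper sidesteps this entirely by working with $\PP_1(\vect{\mathbf{a}})$, which is $1-o(1)$ of $\PP_0$ rather than half of it. The fix is therefore structural, not just a matter of re-optimizing: reserve a disjoint prime band for the final step, as the paper does.
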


The numerical values of $13$ and $6$ in the above theorem are only of minor significance, and can be ignored for a first reading.

Observe that $x,y$ obey the condition \eqref{xy} from the previous section.  If Theorem \ref{first-red} holds, then in  terms of the quantity $Y(x)$ defined in the introduction, we have
$$ Y(x) \geq y$$
which by \eqref{ydef} will imply Theorem \ref{mainthm-y} by taking $r$ sufficiently large depending on $R$.

It remains to prove Theorem \ref{first-red}.  Set
\begin{equation}\label{zdef}
 z :=x^{\log_3 x/(3\log_2 x)},
\end{equation}
and partition the primes less than or equal to $x$ into the four disjoint classes
\begin{align*}
\cS_1 &:= \{ s \; \mbox{prime} : s\le \log x \text{ or } z < s \le x/4 \}\\
\cS_2 &:= \{ s\; \mbox{prime} : \log x < s \le z \}\\
\cS_3 &:= \PP = \{ s \; \mbox{prime}: x/2 < s \leq x\}\\
\cS_4 &:= \{s \;\mbox{prime}: x/4 < s \leq x/2\}.
\end{align*}

We are going to sieve $[y]$ in four stages by removing at most one congruence class $a_s \pmod{s}$ for each prime $s \in S_i$, $i = 1,2,3,4$. If we can do this in such a way that nothing is left at the end, we shall have achieved our goal.

We first dispose of the final sieving process (involving $\cS_4$), as it is rather trivial.  Namely, we reduce Theorem \ref{first-red} to

\begin{thm}[Second reduction]\label{second-red}  Let $r,x,y$ be as in
  Theorem \ref{first-red}, and let $\cS_1,\cS_2,\cS_3$ be as above.
  Then there exists a residue class $a_s \pmod s$ for each $s \in
  \cS_1 \cup \cS_2 \cup \cS_3$, such that the union of these classes
  contains all but at most $(\frac{1}{5} + o(1)) \frac{x}{\log x}$ of the positive integers less than or equal to $y$.
\end{thm}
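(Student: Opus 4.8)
The plan is to sieve $[y]$ in three successive stages, using the primes of $\cS_1$, then those of $\cS_2$, then those of $\cS_3=\PP$, deleting one residue class $a_s\pmod s$ per prime, and to bound the number of survivors by the probabilistic method. First I would dispose of $\cS_1$ by the crude choice $a_s=0$ for all $s\in\cS_1$. An integer $n\in[y]$ surviving this stage has all of its prime factors in $(\log x,z]\cup(x/4,y]$; since $y\le x\log x$ it has at most one prime factor exceeding $x/4$, and if $n=pm$ with $p\in(x/4,y]$ prime then $m\le 4y/x=o(\log x)$ forces $m=1$. Hence the survivors are exactly the primes in $(x/4,y]$ together with the $z$-smooth integers $\le y$ that are free of prime factors $\le\log x$; by \eqref{zdef} and the de Bruijn bound for smooth numbers the latter set has size at most $\Psi(y,z)\le y(\log x)^{-3+o(1)}=o(x/\log x)$, so I simply leave those in place (they are absorbed by the permitted error). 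Thus the problem reduces to covering the primes in $(x/4,y]$ using only $\cS_2\cup\cS_3$.

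For the second stage, choose the residues $\mathbf{a}_s$ ($s\in\cS_2$) independently and uniformly, and let $V$ be the set of primes $p\in(x/4,y]$ with $p\not\equiv\mathbf{a}_s\pmod s$ for all $s\in\cS_2$. For fixed such $p$ one has $\PR(p\in V)=\prod_{s\in\cS_2}(1-1/s)\asym\log_2 x/\log z$ by Mertens' theorem, so by linearity together with \eqref{zdef} and \eqref{ydef},
\[
\E\,\#V=(1+o(1))\,\frac{y}{\log x}\cdot\frac{\log_2 x}{\log z}=(1+o(1))\,\frac{r}{2\log r}\cdot\frac{x}{\log x},
\]
a large multiple of $x/\log x$ (this is where the numerical constants in \eqref{ydef}, \eqref{zdef} are chosen). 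A routine second-moment estimate shows that $\#V=(1+o(1))\E\#V$ and $\#(V\setminus\QQ_0)=o(x/\log x)$ with probability $1-o(1)$.

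The third stage is where the progression estimates of Section~\ref{sec:prime-progressions} enter. Let $\PP_0\subseteq\PP$ and $\QQ_0\subseteq\QQ$ be as in Lemma~\ref{Q0}, and refine $\relr$ to the random relation $\relra$ by putting $p\relra q$ iff $p\relr q$ and the whole progression $q,q+r!p,\dots,q+(r-1)r!p$ lies in $V$; write $\Gamma(p)=\{q:p\relra q\}$, $D_V(p)=\#\Gamma(p)$, and for $v\in V$ write $c_V(v,p)=\#\{0\le j\le r-1:v-jr!p\in\Gamma(p)\}$, the number of positions in which $v$ sits inside such a progression. I would then take $a_p=\mathbf{q}_p$, chosen uniformly at random in $\Gamma(p)$, independently over $p\in\PP_0$ (and $a_p=0$ when $\Gamma(p)=\varnothing$ or $p\notin\PP_0$). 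Since $a_p=\mathbf{q}_p$ deletes from $V$ every $\mathbf{q}_p+jr!p$ ($0\le j\le r-1$) lying in $V$, for $v\in V$
\[
\PR(\,v\ \text{survives Stage 3}\mid V\,)\ \le\ \prod_{p\in\PP_0}\Bigl(1-\frac{c_V(v,p)}{D_V(p)}\Bigr)\ \le\ \exp\Bigl(-\sum_{p\in\PP_0}\frac{c_V(v,p)}{D_V(p)}\Bigr).
\]
The decisive computation is of the two expectations over the Stage-2 randomness: using that each $s\in\cS_2$ is coprime to $r!p$ (so $\mathbf{a}_s$ misses all $r$ residues of an $r$-term progression with probability $1-r/s$), together with Lemma~\ref{Q0}, one gets $\E D_V(p)=(1+o(1))(\log_2 x/\log z)^r\alpha_r y/\log^r x$ for $p\in\PP_0$ and $\E\bigl[\sum_{p\in\PP_0}c_V(v,p)\mid v\in V\bigr]=(1+o(1))(\log_2 x/\log z)^{r-1}r\alpha_r x/(2\log^r x)$ for $v\in\QQ_0$; their ratio equals $\tfrac{rx}{2y}\cdot\tfrac{\log z}{\log_2 x}=(1+o(1))\log r$ by \eqref{zdef} and \eqref{ydef} --- the factors $\alpha_r$ and $(\log_2 x/\log z)$ cancel exactly. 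Hence, provided $V$ is \emph{regular} in the sense that $D_V(p)\asym\E D_V(p)$ for all but $o(\#\PP_0)$ of $p\in\PP_0$ and $\sum_{p\in\PP_0}c_V(v,p)\ge(1-o(1))(\log r)\,\E D_V(p)$ for all but $o(\#V)$ of $v\in V\cap\QQ_0$, the bound above gives $\PR(v\ \text{survives Stage 3}\mid V)\le r^{-1+o(1)}$ for all but $o(\#V)$ of $v\in V$, whence
\[
\E\bigl[\#\{\text{survivors of Stage 3}\}\bigr]\ \le\ (1+o(1))\,\frac{\#V}{r}+o\!\Bigl(\frac{x}{\log x}\Bigr)=\Bigl(\frac{1}{2\log r}+o(1)\Bigr)\frac{x}{\log x},
\]
and since $\tfrac1{2\log r}\le\tfrac1{2\log 13}<\tfrac15$ for $r\ge13$ (this is the role of the $13$) this is at most $(\tfrac15+o(1))\frac{x}{\log x}$. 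The probabilistic method now yields a single choice of all the $\mathbf{a}_s$ and $\mathbf{q}_p$ --- combined with $a_s=0$ on $\cS_1$ and on $\PP\setminus\PP_0$ --- whose classes miss at most $(\tfrac15+o(1))\frac{x}{\log x}$ integers of $[y]$, which is Theorem~\ref{second-red}. (The $o(x/\log x)$ above swallows the Stage-1 smooth survivors, $V\setminus\QQ_0$, and the exceptional $v$, using $\#V=(1+o(1))\E\#V$.)

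The hard part will be proving that $V$ is regular with probability $1-o(1)$, and moreover with a failure probability small enough --- a fixed negative power of $\log x$ suffices --- to dominate the trivial worst-case bound $\#V\le\#\QQ\ll y/\log x$. This requires second-moment bounds for $D_V(p)$ and for $\sum_{p\in\PP_0}c_V(v,p)$, i.e.\ estimates for sums over \emph{pairs} of progressions: pairs sharing the common difference $r!p$ in the case of $D_V(p)$, and pairs through a fixed $v$ with distinct differences $r!p,r!p'$ in the case of $\sum_p c_V(v,p)$. The mechanism is that two survival events $w,w'\in V$ are independent up to a factor $1+o(1)$ unless $w\equiv w'$ modulo some prime of $\cS_2$, and the diagonal configurations --- including degenerate ones in which a progression would extend to length $r+1$, cf.\ Lemma~\ref{disj} --- are negligible; the off-diagonal counting is carried out via Lemma~\ref{Q0} and the method behind it, and the contribution of atypical $p$ and $q$ is controlled by the sieve upper bounds of Lemma~\ref{Q0-a}(iii),(iv).
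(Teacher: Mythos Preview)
Your proposal is correct and follows essentially the same three-stage architecture as the paper: the paper deduces Theorem~\ref{second-red} from Theorem~\ref{third-red} by the $a_s=0$ choice on $\cS_1$ plus the smooth-number bound (your Stage~1), then deduces Theorem~\ref{third-red} from Theorem~\ref{fourth-red} via the random Stage~2 residues and Lemma~\ref{QQ1_normal}, and finally proves Theorem~\ref{fourth-red} by the random choice of $\mathbf{q}_p\in\QQ(\vect{\mathbf{a}},p)$ together with the regularity Lemma~\ref{sieveunited}. Your sets $V$, $\Gamma(p)$, $D_V(p)$, $c_V(v,p)$ are exactly the paper's $\QQ(\vect{\mathbf{a}})$, $\QQ(\vect{\mathbf{a}},p)$, $\#\QQ(\vect{\mathbf{a}},p)$, and $\sum_i 1_{p\in\PP_1(\vect{\mathbf{a}},v;i)}$; the cancellation of $\alpha_r$ and of one power of $\gamma\asym\log_2 x/\log z$, yielding the exponent $-\log r$, is precisely the computation following~\eqref{Bp-size}.

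One point deserves comment. Your final paragraph asks for regularity to hold with probability $1-o(1)$, and ``moreover'' with failure probability a fixed negative power of $\log x$ so as to absorb the worst-case bound $\#V\le\#\QQ$. This extra quantitative requirement is unnecessary, and in fact the paper does \emph{not} establish it: Lemma~\ref{sieveunited} only obtains regularity with probability at least $\eps$ for some $\eps\to 0$ slowly, because the step transferring from $\PP_0$ to $\PP_1(\vect{\mathbf{a}})$ in the count $\sum_p c_V(v,p)$ is handled by a bare Markov inequality on $\#\{(p,q):p\relra q\}$. Positive probability is all one needs: one \emph{first} fixes a regular $\vect{\mathbf{a}}$, \emph{then} takes the expectation over $\vect{q}$ (which is exactly how you wrote your displayed bound $\E[\#\text{survivors}]\le(1+o(1))\#V/r$), so the combined expectation $\E_{\vect{\mathbf{a}},\vect{q}}$ and the associated worry about the irregular tail never arise. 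If you instead insisted on a single expectation over both random stages, the second-moment estimates in Section~\ref{sec:prob} only give relative variance $O(1/\log_2 x)$, which would not suffice to beat $y/x$; so drop that requirement and argue conditionally, as the paper does.
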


Indeed, if the $a_s \pmod s$ for $s \in \cS_1 \cup \cS_2 \cup \cS_3$ are as in Theorem \ref{second-red}, then from the prime number theorem, the number of integers less than $y$ that have not already been covered by a residue class is smaller than the number of primes in $\cS_4$.  Thus, we may eliminate each of these surviving integers using a residue class $a_s \pmod s$ from a different element $s$ from $\cS_4$ (and selecting residue classes arbitrarily for any $s \in \cS_4$ that are left over), and Theorem \ref{first-red} follows.

It remains to prove Theorem \ref{second-red}.  For this, we perform the first sieving process (using up the primes from $\cS_1$) and reduce to

\begin{thm}[Third reduction]\label{third-red}  Let $r,x,y,\cS_2,\cS_3$
  be as in Theorem \ref{second-red}, and (as in the previous section)
  let $\QQ$ denote the primes in the range $(x/4,y]$.  Then there
    exists a residue class $a_s \pmod s$ for each $s \in \cS_2 \cup
    \cS_3$, such that the union of these classes contains all but at most $(\frac{1}{5} + o(1)) \frac{x}{\log x}$ of the elements of $\QQ$.
\end{thm}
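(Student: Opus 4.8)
\emph{Overall strategy and first stage.} The plan is to choose every residue class at random and bound the expected number of elements of $\QQ$ that remain uncovered, concluding by the probabilistic method. I work with the sets $\PP_0\subseteq\PP=\cS_3$, $\QQ_0\subseteq\QQ$ and the estimates \eqref{pq0}, \eqref{q0-i}, \eqref{q0-ii} of Lemma~\ref{Q0}, together with Lemma~\ref{disj}; the points of $\QQ\setminus\QQ_0$ and primes of $\PP\setminus\PP_0$ are negligible (they number $o(y/\log x)$ and $o(x/\log x)$ by Lemma~\ref{Q0}) and contribute $o(x/\log x)$ to the final count after being discarded or assigned arbitrary classes. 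For each prime $s\in\cS_2=\{s:\log x<s\le z\}$ pick $a_s\in\Z/s\Z$ uniformly and independently, and let $V\subseteq\QQ_0$ consist of those $q\in\QQ_0$ avoided by all these classes. Since each $q\in\QQ_0$ is coprime to every $s\in\cS_2$, one has $\PR[q\in V]=\prod_{s\in\cS_2}(1-1/s)=:\delta$, and by Mertens' theorem and \eqref{zdef}, $\delta=(1+o(1))\tfrac{\log_2 x}{\log z}=(1+o(1))\tfrac{3(\log_2 x)^2}{\log_3 x\,\log x}$. Hence $\E\,\#V\asym\delta\,\#\QQ_0\asym\delta y/\log x$, and substituting \eqref{ydef} this equals $\asym\tfrac{r}{2\log r}\cdot\tfrac{x}{\log x}$. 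A second-moment computation shows $\#V$ is concentrated near this value and, more importantly, that $V$ is ``regular'' in the sense of the last paragraph.

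\emph{Second stage.} Refine $\relr$ to $\relra$ by declaring $p\relra q$ iff $p\relr q$ and the whole progression $\{q,q+r!p,\dots,q+(r-1)r!p\}$ lies in $V$; equivalently, the class $q\bmod p$ is \emph{rich}, meaning all $r$ of its $\relr$-progression members survived the first stage. By \eqref{q0-i}, each $p\in\PP_0$ carries $\asym\alpha_r\,y/\log^r x$ classes with a $\relr$-progression, and a given such progression survives the first stage with probability $\prod_{s\in\cS_2}(1-r/s)=(1+o(1))\delta^{r}$; so the number of rich classes of $p$ has expectation $\asym\alpha_r\delta^{r}y/\log^r x$, which by \eqref{ydef}--\eqref{zdef} tends to infinity, and (after a concentration argument) almost all $p\in\PP_0$ have a rich class. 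For each such $p$, choose $a_p\bmod p$ uniformly among its rich classes (arbitrarily otherwise); this sifts out all $r$ members of the chosen progression, all of which lie in $V$. Now fix $q\in\QQ_0$: by Lemma~\ref{disj} and \eqref{q0-ii} there are $\asym r\alpha_r\,x/(2\log^r x)$ primes $p\in\PP_0$ (``potential coverers'' of $q$) putting $q$ into some $\relr$-progression of common difference $r!p$; conditionally on $q\in V$, the class $q\bmod p$ is rich for such a $p$ exactly when the remaining $r-1$ members survived, an event of probability $\prod_{s\in\cS_2}\tfrac{s-r}{s-1}=(1+o(1))\delta^{r-1}$, in which case $p$ covers $q$ with probability $\asym(\alpha_r\delta^{r}y/\log^r x)^{-1}$; multiplying gives $\PR[p\text{ covers }q\mid q\in V]\asym\tfrac{\log^r x}{\alpha_r y\,\delta}$. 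Therefore the expected number of potential coverers that actually cover $q$ is $\asym r\alpha_r\tfrac{x}{2\log^r x}\cdot\tfrac{\log^r x}{\alpha_r y\,\delta}=\tfrac{rx}{2y\delta}\asym\log r$ (using $y\delta\asym\tfrac{rx}{2\log r}$ from \eqref{ydef}--\eqref{zdef}). Conditionally on the first stage the classes $a_p$ are independent and each covers $q$ with small probability, so $\PR[q\text{ uncovered by }\cS_3\mid q\in V]\asym e^{-\log r}=(1+o(1))/r$, whence $\PR[q\text{ uncovered}]\le(1+o(1))\delta/r$ for almost all $q\in\QQ_0$. Summing over $q\in\QQ_0$ and adding the negligible contributions gives $\E\,\#\{q\in\QQ:q\text{ uncovered}\}\le\big(\tfrac{1}{2\log r}+o(1)\big)\tfrac{x}{\log x}$. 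Since $\tfrac{1}{2\log r}<\tfrac15$ exactly when $r\ge e^{5/2}$, i.e. when $r\ge 13$, the right side is $\le(\tfrac15+o(1))\tfrac{x}{\log x}$, and the probabilistic method yields a deterministic choice of the $a_s$ with the same bound, proving Theorem~\ref{third-red}. (This is where the numerical value $13$ enters.)

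\emph{Main obstacle.} All the ``$\asym$'' signs in the second stage hide the real work: one must show that conditioning on the first stage does not distort the counts seen by $q$. Concretely, one needs, with probability $1-o(1)$ over the first-stage randomness, that (i) $\#\{q':p\relra q'\}\asym\alpha_r\delta^{r}y/\log^r x$ for all but $o(\#\PP_0)$ primes $p\in\PP_0$ (so rich classes exist and are not too sparse), and (ii) for all but $o(\#\QQ_0)$ of $q\in\QQ_0\cap V$, the number of potential coverers whose progression through $q$ survives is $\asym\delta^{r-1}\cdot r\alpha_r x/(2\log^r x)$; combined with a standard second-moment / hypergraph-covering estimate these give $\PR[q\text{ uncovered by }\cS_3\mid q\in V]\le(1+o(1))/r$ for almost all $q$. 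All of this reduces to a second-moment analysis in which the error terms are controlled by the fact that two distinct $\relr$-progressions through a common point overlap in $O_r(1)$ further points, and in which every implied constant must be kept uniform in all parameters except $r$. This regularity statement for $V$, not the arithmetic above, is the technical heart of the argument.
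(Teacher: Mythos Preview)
Your proposal is correct and follows essentially the same approach as the paper: your $\delta$ is the paper's $\gamma$, your $V$ is $\QQ_0(\vect{\mathbf{a}})$, your ``rich classes of $p$'' are the sets $\QQ(\vect{\mathbf{a}},p)$, and your survival probability computation $\PR[q\text{ uncovered}\mid q\in V]\le(1+o(1))/r$ is exactly the paper's \eqref{pqr}. The ``main obstacle'' you correctly isolate is precisely Lemma~\ref{sieveunited} (built on Lemma~\ref{QQ1p-normal} via second moments), with the minor caveat that the paper only secures the full regularity package with probability $\geq\eps$ for some slowly decaying $\eps=o(1)$ rather than $1-o(1)$---this still suffices for the probabilistic method, but you should not expect to get $1-o(1)$ without an additional variance estimate.
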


\begin{proof}[Proof of Theorem \ref{second-red} assuming Theorem \ref{third-red}]  We take $a_s := 0$ for all $s\in \cS_1$.
Write $\cR \subset [y]$ for the residual set of elements
which survive this first sieving, that is to say $\cR$ consists of all numbers in $[y]$ that are not divisible by any prime $s$ in $\cS_1$. Taking into account that
$(x/4)\log x>y$ from \eqref{ydef}, we conclude that 
$$\cR = \QQ \cup \cR^{\err},$$
where $\cR^{\err}$ contains only $z$-smooth
  numbers, that is to say numbers in $[y]$ all of
  whose prime factors are at most $z$.

Let $u$ denote the quantity
$$ u := \frac{\log y}{\log z},$$
so from \eqref{zdef} one has $u \asym 3 \frac{\log_2 x}{\log_3 x}$.  
By standard counts for smooth numbers (e.g. de Bruijn's theorem \cite{deB}),
\begin{align*}
\# \cR^{\err}  &\ll y e^{-u\log u + O( u \log\log(u+2) ) } \\
&= \frac{y}{\log^{3+o(1)} x} \\
&= \frac{x}{\log^{2+o(1)} x} \\
&= o( x/\log x).
\end{align*}
Thus the contribution of $\cR^{\err}$ may be absorbed into the exceptional set in Theorem \ref{second-red}, and this theorem is now immediate from Theorem \ref{third-red}.
\end{proof}

\begin{remark} One can replace the appeal to de Bruijn's theorem here by the simpler bounds of Rankin \cite[Lemma II]{R1}, if one makes the very minor change of increasing the $3$ in the denominator of \eqref{zdef} to $4$, and to similarly increase the $6$ in \eqref{ydef} to $8$. 
\end{remark}

It remains to establish Theorem \ref{third-red}.
Recall from \eqref{pq} that $\QQ$ has cardinality $\asym y/\log x$.  This is significantly larger than the error term of $(\frac{1}{5}+o(1)) \frac{x}{\log x}$ permitted in Theorem \ref{third-red}; our sieving process has to reduce the size of $\QQ$ by a factor comparable to $y/x$.  The purpose of the second sieving, by congruences $\mathbf{a}_s \pmod{s}$ with $s \in \cS_2$, is to achieve almost all of this size reduction. Our choice of the $\mathbf{a}_s$ for $s \in \cS_2$ will be completely random (which is why we are using the boldface font here): that is, for each prime $s \in \cS_2$ we select $\mathbf{a}_s$ uniformly at random from $\{0,1,\dots, s-1\}$, and these choices are independent for different values of $s$. Write $\vect{\mathbf{a}}$ for the random vector $(\mathbf{a}_s)_{s \in \cS_2}$.

Observe that if $n$ is any integer (not depending on $\vect{\mathbf{a}}$), then the probability that $n$ lies outside of all of the $\mathbf{a}_s \pmod{s}$ is exactly equal to
$$ \gamma := \prod_{s \in S_2} \left(1 - \frac{1}{s}\right).$$
This quantity will be an important normalizing factor in the arguments that follow.
From Mertens' theorem and \eqref{ydef}, \eqref{zdef} we see that
\begin{equation}\label{gamma-form}
\gamma \asym \frac{\log_2 x}{\log z} \asym \frac{3 (\log_2 x)^2}{\log x \log_3 x} \asym  \frac{r}{2\log r} \frac{x}{y}.
\end{equation}

Write $\QQ(\vect{\mathbf{a}})$ for the (random) residual set of primes $q$ in $\QQ$ that do not lie in any of the congruence classes $\mathbf{a}_s \pmod{s}$ for $s \in \cS_2$.  We will in fact focus primarily on the slightly smaller set
$$ \QQ_0(\vect{\mathbf{a}}) := \QQ(\vect{\mathbf{a}}) \cap \QQ_0$$
where $\QQ_0$ is the subset of $\QQ$ constructed in Lemma \ref{Q0}. 
From linearity of expectation we see that
\begin{equation}\label{eqa}
\E \# \QQ(\vect{\mathbf{a}}) = \gamma \# \QQ
\end{equation}
and thus from \eqref{gamma-form}, \eqref{pq}
\begin{equation}\label{qa}
 \E \# \QQ(\vect{\mathbf{a}}) \asym \frac{r}{2\log r} \frac{x}{\log x}.
\end{equation}
Similarly, from Lemma \ref{Q0} we have
$$ \# (\QQ \backslash \QQ_0) = o\left( \frac{y}{\log x} \right)$$
and thus from linearity of expectation and \eqref{gamma-form} we have
$$ \E \# (\QQ(\vect{\mathbf{a}}) \backslash \QQ_0(\vect{\mathbf{a}}) ) = o\left( \gamma \frac{y}{\log x} \right) = o\left( \frac{x}{\log x} \right).$$
In particular, from Markov's inequality we have
\begin{equation}\label{qq-qq0}
\# ( \QQ(\vect{\mathbf{a}}) \backslash \QQ_0(\vect{\mathbf{a}}) ) = o\left( \gamma \frac{y}{\log x} \right) = o\left( \frac{x}{\log x} \right)
\end{equation}
with probability $1-o(1)$.

\begin{table}
\begin{tabular}{|l|l|l|}
\hline
Set & Description & Expected cardinality \\
\hline
$\PP$ & Primes in $(x/2,x]$ & $\asym \frac{x}{2\log x}$ \\
$\PP_0$ & Primes in $\PP$ connected to the expected \# of primes in $\QQ$ & $\asym \frac{x}{2\log x}$\\ 
$\PP_1(\vect{\mathbf{a}})$ & Primes in $\PP_0$ connected to the expected \# of primes in $\QQ(\vect{\mathbf{a}})$ & $\asym \frac{x}{2\log x}$\\
$\PP_1(\vect{\mathbf{a}},q;i)$ & Primes in $\PP_1(\vect{\mathbf{a}})$ $i$-connected to a given prime $q \in \QQ_1(\vect{\mathbf{a}})$ & $\asym \gamma^{r-1} \alpha_r \frac{x}{2\log^r x}$ \\
\hline
$\QQ$ & Primes in $(x/4,y]$ & $\asym \frac{y}{\log x}$ \\
$\QQ_0$ & Primes in $\QQ$ connected to the expected \# of primes in $\PP_0$ & $\asym \frac{y}{\log x}$ \\
$\QQ(\vect{\mathbf{a}})$ & Randomly refined subset of $\QQ$ & $\asym \frac{r}{2\log r} \frac{x}{\log x}$ \\
$\QQ(\vect{\mathbf{a}}, p)$ & Primes in $\QQ(\vect{\mathbf{a}})$ connected to a given prime $p \in \PP_1(\vect{\mathbf{a}})$ & $\asym \gamma^r\alpha_r\frac y{\log^r x}$ \\
$\QQ_0(\vect{\mathbf{a}})$ & Intersection of $\QQ(\vect{\mathbf{a}})$ with $\QQ_0$ & $\asym \frac{r}{2\log r} \frac{x}{\log x}$ \\
$\QQ_1(\vect{\mathbf{a}})$ & Primes in $\QQ_0(\vect{\mathbf{a}})$ connected to the expected \# of primes in $\PP_1(\vect{\mathbf{a}})$ & $\asym \frac{r}{2\log r} \frac{x}{\log x}$ \\
$\QQ_1(\vect{\mathbf{a}},\vect{\mathbf{q}})$ & Randomly refined subset of $\QQ_1(\vect{\mathbf{a}})$ & $\asym \frac{1}{2\log r} \frac{x}{\log x}$\\
\hline
\end{tabular}
\caption{A brief description of the various $\PP$ and $\QQ$-type sets used in the construction, and their expected size.  Roughly speaking, the congruence classes from $\cS_1$ are used to cut down $[y]$ to approximately $\QQ$, the congruence classes from $\cS_2$ are used to cut $\QQ$ down to approximately $\QQ_0(\vect{\mathbf{a}})$, the congruence classes from $\cS_3 = \PP$ are used to cut $\QQ_0(\vect{\mathbf{a}})$ down to approximately $\QQ_1(\vect{\mathbf{a}}, \vect{\mathbf{q}})$, and the congruence classes in $\cS_4$ are used to cover all surviving elements from previous sieving.}
\end{table}

We have an analogous concentration bound for $\# \QQ(\vect{\mathbf{a}})$:

\begin{lem}\label{QQ1_normal}  With probability $1-o(1)$, we have
$$ \# \QQ(\vect{\mathbf{a}}) \asym \frac{r}{2\log r} \frac{x}{\log x} \asym \gamma \frac{y}{\log x}.$$
In particular, from \eqref{qq-qq0} we also have
$$ \# \QQ_0(\vect{\mathbf{a}}) \asym \frac{r}{2\log r} \frac{x}{\log x} \asym \gamma \frac{y}{\log x}$$
with probability $1-o(1)$.
\end{lem}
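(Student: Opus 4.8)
The plan is to prove concentration of $\#\QQ(\vect{\mathbf{a}})$ around its mean $\gamma\#\QQ$ (computed in \eqref{eqa}, \eqref{qa}) via a second-moment or Azuma-type argument, and then deduce the statement for $\#\QQ_0(\vect{\mathbf{a}})$ from \eqref{qq-qq0}. Write $\#\QQ(\vect{\mathbf{a}}) = \sum_{q\in\QQ} X_q$, where $X_q = \prod_{s\in\cS_2}\mathbf{1}[q\not\equiv \mathbf{a}_s\ (\mathrm{mod}\ s)]$ is the indicator that $q$ survives the second sieving. Then $\E X_q = \gamma$ and, for distinct $q,q'\in\QQ$, $\E X_q X_{q'} = \prod_{s\in\cS_2}\PR(q\not\equiv\mathbf{a}_s,\ q'\not\equiv\mathbf{a}_s)$. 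For a given prime $s$, the pair-survival probability is $1-2/s$ if $s\nmid q-q'$ and $1-1/s$ if $s\mid q-q'$; hence $\E X_q X_{q'} = \gamma^2 \prod_{s\in\cS_2,\, s\mid q-q'} \frac{1-1/s}{1-2/s} = \gamma^2\prod_{s\in\cS_2,\, s\mid q-q'}\bigl(1+O(1/s)\bigr)$.

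Next I would bound the variance. We have
\[
\Var\bigl(\#\QQ(\vect{\mathbf{a}})\bigr) \le \sum_{q\in\QQ}\E X_q + \sum_{\substack{q,q'\in\QQ\\ q\ne q'}}\bigl(\E X_q X_{q'} - \gamma^2\bigr) \ll \gamma\#\QQ + \gamma^2 \sum_{\substack{q\ne q'}}\Bigl(\prod_{\substack{s\in\cS_2\\ s\mid q-q'}}\bigl(1+O(1/s)\bigr) - 1\Bigr).
\]
Since every prime $s\in\cS_2$ exceeds $\log x$, any integer $n$ with $|n|\le y$ has at most $O(\log y/\log_2 x) = O(u)$ prime factors from $\cS_2$, so the product over $s\mid q-q'$ is $1+O\bigl(\frac{1}{\log x}\sum_{s\mid q-q'} 1\bigr)$ only when $q-q'$ has no $\cS_2$-prime factors (giving an empty product, contributing $0$); more carefully, $\prod_{s\mid q-q', s\in\cS_2}(1+O(1/s)) - 1 \ll \sum_{s\in\cS_2,\, s\mid q-q'} 1/s$ when this sum is $O(1)$, which holds here. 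Summing, $\sum_{q\ne q'}\sum_{s\in\cS_2,\, s\mid q-q'} 1/s = \sum_{s\in\cS_2}\frac1s \#\{(q,q')\in\QQ^2: s\mid q-q'\} \ll \sum_{s\in\cS_2}\frac1s\cdot\frac{(\#\QQ)^2}{s} \ll (\#\QQ)^2\sum_{s>\log x}\frac{1}{s^2} \ll \frac{(\#\QQ)^2}{\log x\log_2 x}$. Therefore $\Var(\#\QQ(\vect{\mathbf{a}})) \ll \gamma\#\QQ + \gamma^2(\#\QQ)^2/(\log x\log_2 x) = o\bigl((\gamma\#\QQ)^2\bigr)$, using $\gamma\#\QQ \asymp x/\log x \to\infty$ from \eqref{qa}. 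Chebyshev's inequality then gives $\#\QQ(\vect{\mathbf{a}}) = (1+o(1))\gamma\#\QQ$ with probability $1-o(1)$, and \eqref{gamma-form}, \eqref{pq} convert this to $\#\QQ(\vect{\mathbf{a}}) \asym \frac{r}{2\log r}\frac{x}{\log x} \asym \gamma\frac{y}{\log x}$.

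Finally, the claim for $\#\QQ_0(\vect{\mathbf{a}}) = \#\QQ(\vect{\mathbf{a}}) - \#\bigl(\QQ(\vect{\mathbf{a}})\setminus\QQ_0(\vect{\mathbf{a}})\bigr)$ follows immediately: by \eqref{qq-qq0} the subtracted term is $o(x/\log x)$ with probability $1-o(1)$, which is negligible compared to $\#\QQ(\vect{\mathbf{a}})\asymp x/\log x$, so intersecting the two $1-o(1)$-probability events gives $\#\QQ_0(\vect{\mathbf{a}})\asym \frac{r}{2\log r}\frac{x}{\log x}$. The main obstacle is the variance bound — specifically, ensuring that the correlation correction factor $\prod_{s\mid q-q'}\frac{1-1/s}{1-2/s}$ summed over all pairs is genuinely lower order; this works precisely because $\cS_2$ consists of primes in the range $(\log x, z]$, so each such prime is large enough that $\sum_{s\in\cS_2} 1/s^2$ is small and no integer of size $\le y$ can be divisible by too many of them. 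One should also double-check that $1 - 2/s > 0$ for all $s\in\cS_2$ (true since $s>\log x\ge 3$ for large $x$), so the pair-survival probabilities are genuinely of the stated form; alternatively one can run the same computation as an Azuma--Hoeffding martingale argument over the independent choices $(\mathbf{a}_s)_{s\in\cS_2}$, where changing one $\mathbf{a}_s$ alters $\#\QQ(\vect{\mathbf{a}})$ by at most $O(\#\QQ/s + 1)$, giving an even cleaner concentration statement.
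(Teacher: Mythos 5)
Your argument is the same second-moment/Chebyshev approach used in the paper, and it is correct. The paper's version establishes, uniformly for distinct $q_1,q_2\in\QQ$, the pointwise asymptotic $\PR(q_1,q_2\in\QQ(\vect{\mathbf{a}}))\asym\gamma_2\asym\gamma^2$ by noting that only $o(\log x)$ primes $s\in\cS_2$ can divide $q_2-q_1$ and that each contributes a factor $1+O(1/\log x)$; you instead bound the covariance sum by reversing the order of summation and using $\sum_{s>\log x} s^{-2}$ being small. These are just two bookkeeping styles for the same estimate.

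Two small imprecisions are worth noting. First, in your identity $\E X_q X_{q'}=\gamma^2\prod_{s\mid q-q'}\frac{1-1/s}{1-2/s}$ the prefactor should be $\gamma_2=\prod_{s\in\cS_2}(1-2/s)$, not $\gamma^2=\prod_{s\in\cS_2}(1-1/s)^2$; this introduces an additional term of size $O(\gamma^2(\#\QQ)^2/\log x)$ in the variance (from the $\gamma_2-\gamma^2$ discrepancy over the $\asymp(\#\QQ)^2$ pairs with no $\cS_2$-prime dividing $q-q'$), which is still $o\left((\gamma\#\QQ)^2\right)$ but should be accounted for. Second, the bound $\#\{(q,q'):s\mid q-q'\}\ll(\#\QQ)^2/s$ needs Brun--Titchmarsh; the trivial bound gives only $\ll(\#\QQ)^2\log x/s$, which costs a factor $\log x$ but still leaves the variance at $o\left((\gamma\#\QQ)^2\right)$.

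The closing Azuma--Hoeffding aside, however, does not work as stated. With bounded differences $c_s\ll\#\QQ/s$ one gets $\sum_s c_s^2\ll(\#\QQ)^2/(\log x\log_2 x)\asymp y^2/(\log^3 x\log_2 x)$, and for concentration at scale $\gamma\#\QQ\asymp x/\log x$ one would need this to be $o\left((x/\log x)^2\right)$, i.e.\ $(y/x)^2=o(\log x\log_2 x)$. But from \eqref{ydef} one has $(y/x)^2\asymp r^2(\log x\log_3 x)^2/(36\log^2 r\,(\log_2 x)^4)$, which is much larger than $\log x\log_2 x$. So Azuma with the naive Lipschitz constants is strictly weaker than the second-moment bound here; the second moment succeeds because it exploits the near pairwise-independence of the indicators $X_q$, which the worst-case bounded-differences argument cannot see.
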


This lemma is proven by a routine application of the second moment method; we defer that proof to Section \ref{sec:prob}.  It will now suffice to show

\begin{thm}[Fourth reduction]\label{fourth-red}  Let
  $x,y,r,\vect{\mathbf{a}},\PP_0,\QQ_0(\vect{\mathbf{a}})$ be as
  above, and let $\eps > 0$ be a quantity going to zero arbitrarily
  slowly as $x \to \infty$, thus $\eps = o(1)$.  Then with probability
  at least $\eps$ in the random choice of $\vect{\mathbf{a}}$, 
we may find a length $r$  arithmetic progression $\{ q_p + i r! p: 0
\leq i \leq r-1\}$ for each $p \in \PP_0$, such that the union of
these progressions contains all but at most $(\frac{1}{5} + o(1)) \frac{x}{\log x}$ of the elements of $\QQ_0(\vect{\mathbf{a}})$.  (The $o(1)$ decay in the conclusion may depend on $\eps$.)
\end{thm}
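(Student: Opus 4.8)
The plan is to use the congruence classes $a_p \pmod p$ for $p \in \PP_0$ to sieve out $r$ elements of $\QQ_0(\vect{\mathbf{a}})$ at a time, namely a progression $q, q+r!p, \dots, q+(r-1)r!p$ lying entirely inside $\QQ$, by setting $a_p := q$ for a suitably chosen $q$. To make this work efficiently we want the chosen progressions to be nearly disjoint, so that $|\PP_0| \asym \frac{x}{2\log x}$ progressions of length $r$ cover roughly $r \cdot \frac{x}{2\log x} \asym \frac{r}{2\log r} \cdot \frac{r \log r}{\log r}\cdots$ — more precisely, from \eqref{qa} and \eqref{gamma-form}, $\#\QQ_0(\vect{\mathbf{a}}) \asym \frac{r}{2\log r}\frac{x}{\log x}$, and covering all but $(\frac15 + o(1))\frac{x}{\log x}$ of it requires covering a $(1 - \frac{2\log r}{5r} + o(1))$-proportion, which for $r \geq 13$ is strictly less than $1$, so we have room to lose a constant fraction to overlaps. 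First I would set up the bipartite structure: for $p \in \PP_0$, the candidate "good" $q$'s are those with $p \relr q - ir!p$ for some $i$, i.e. $q$ lies in an $r$-term progression in $\QQ$ with common difference $r!p$ anchored appropriately; by Lemma \ref{Q0} there are $\asym \alpha_r \frac{y}{\log^r x}$ such progressions through each $p \in \PP_0$. After the random sieving by $\cS_2$, each of the $r$ elements of such a progression survives into $\QQ(\vect{\mathbf{a}})$ independently with probability $\gamma$ (independence across the $r$ elements holds because they are distinct primes, hence lie in distinct residue classes mod each $s \in \cS_2$ — this needs $r!p$ coprime to all $s \in \cS_2$, which holds since $s > \log x > r$ and $s \leq z < p$), so the expected number of progressions through $p$ fully surviving is $\asym \gamma^r \alpha_r \frac{y}{\log^r x}$.

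The key steps, in order: (1) Define $\PP_1(\vect{\mathbf{a}})$ to be the set of $p \in \PP_0$ for which the number of fully-surviving progressions is $\asym \gamma^r \alpha_r \frac{y}{\log^r x}$ (the "typical" count), and show $\#\PP_1(\vect{\mathbf{a}}) \asym \frac{x}{2\log x}$ with high probability — this is a first/second moment computation on the random variable counting surviving progressions through a fixed $p$, deferred in spirit to Section \ref{sec:prob}. (2) Dually, define $\QQ_1(\vect{\mathbf{a}}) \subseteq \QQ_0(\vect{\mathbf{a}})$ to be the $q$'s that are $i$-connected (for each $i$) to the expected number $\asym \gamma^{r-1}\alpha_r \frac{x}{2\log^r x}$ of primes $p \in \PP_1(\vect{\mathbf{a}})$ — here one element of the progression (namely $q$ itself) is already known to survive, so only $r-1$ further survivals are needed, giving the $\gamma^{r-1}$ — and show $\#\QQ_1(\vect{\mathbf{a}}) \asym \#\QQ_0(\vect{\mathbf{a}})$ w.h.p. (3) Now invoke a random greedy / probabilistic matching argument: for each $p \in \PP_1(\vect{\mathbf{a}})$ choose one of its surviving progressions uniformly at random (these choices independent over $p$), set $a_p$ to its anchor, and for the remaining $p \in \PP_0 \setminus \PP_1(\vect{\mathbf{a}})$ choose $a_p$ arbitrarily. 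For a fixed $q \in \QQ_1(\vect{\mathbf{a}})$, estimate the probability that $q$ is \emph{not} covered: $q$ is covered iff for some $p$ and some $i$ with $p \relr q - ir!p$, the progression chosen at $p$ is exactly the one through $q$ in position $i$. Since each eligible $p$ picks the relevant progression with probability $\asymp \frac{1}{\gamma^r \alpha_r y / \log^r x}$ and there are $\asymp r \cdot \gamma^{r-1}\alpha_r \frac{x}{2\log^r x}$ eligible pairs $(p,i)$, the expected number of times $q$ gets covered is $\asymp \frac{r}{2\gamma}\cdot\frac{x}{y} \asym \frac{r}{2}\cdot\frac{2\log r}{r} = \log r$ by \eqref{gamma-form}. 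Treating the covering events as roughly independent (or just using inclusion–exclusion / FKG-type lower bounds, or the "mostly independent" bound that the probability $q$ survives is at most $\prod(1 - p_{\text{cover}}) + \text{error} \approx e^{-\log r} = 1/r$ plus lower-order corrections), the probability $q$ survives the third sieving is $\leq \frac{1}{r} + o(1)$, uniformly in $q$.

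(4) Finally, by linearity of expectation the expected number of uncovered elements of $\QQ_0(\vect{\mathbf{a}})$ is at most $(\frac{1}{r}+o(1))\#\QQ_1(\vect{\mathbf{a}}) + \#(\QQ_0(\vect{\mathbf{a}})\setminus\QQ_1(\vect{\mathbf{a}})) \leq (\frac{1}{r}+o(1))\cdot\frac{r}{2\log r}\frac{x}{\log x} = (\frac{1}{2\log r}+o(1))\frac{x}{\log x}$, and since $\frac{1}{2\log r} < \frac15$ for $r \geq 13$, Markov's inequality gives that with probability $\geq \eps$ (indeed $1 - o(1)$, but $\eps$ suffices) the number of uncovered elements is $\leq (\frac15 + o(1))\frac{x}{\log x}$; combining with the w.h.p. events in (1)–(3) via a union bound finishes the proof. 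The main obstacle I anticipate is step (3): controlling the probability that a fixed $q$ survives the random selection of progressions. The covering events at different primes $p$ are \emph{not} independent (two primes $p \neq p'$ may have a common candidate progression through $q$ only in degenerate cases, but more seriously the normalization of "pick uniformly among surviving progressions through $p$" couples all of $q$'s covering chances to the global survival pattern), and one also needs the uniformity of the bound over \emph{all} $q \in \QQ_1(\vect{\mathbf{a}})$ simultaneously. I expect this to be handled by a careful conditioning argument — fixing $\vect{\mathbf{a}}$ in a high-probability-good configuration, then analyzing only the randomness in the progression-selection, and bounding the survival probability of $q$ by a product over eligible $p$ of $(1 - c/(\gamma^r\alpha_r y/\log^r x))$ up to controllable error terms coming from the $q$-dependent conditioning of which progressions survive — this is presumably where the bulk of the technical work (the later sections of the paper) goes.
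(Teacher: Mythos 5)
Your proposal tracks the paper's actual argument closely: define $\PP_1(\vect{\mathbf{a}})$ and $\QQ_1(\vect{\mathbf{a}})$ via a second-moment concentration argument (Lemma \ref{sieveunited}, deferred to Section \ref{sec:prob}), pick $\mathbf{q}_p$ uniformly at random from $\QQ(\vect{\mathbf{a}},p)$ independently across $p$, bound the survival probability of each $q\in\QQ_1(\vect{\mathbf{a}})$ by roughly $1/r$, and finish with linearity of expectation. Your heuristic ``expected coverage $\asym\log r$'' computation is exactly the paper's.

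The one place where you hedge — step (3), where you worry that the covering events at different $p$ are coupled and reach for FKG or inclusion--exclusion — is actually a non-issue, and this is the small but important idea you didn't quite nail down. Once $\vect{\mathbf{a}}$ is conditioned on being in a ``good'' configuration (probability $\geq\eps$), the sets $\QQ(\vect{\mathbf{a}},p)$ are deterministic, and the only remaining randomness is in the $\mathbf{q}_p$, which are genuinely independent across $p$ by construction. The worry about a single $p$ being able to cover $q$ in several positions $i$ (which would spoil the clean product) is precisely what the $\relr$ modification and Lemma \ref{disj} were introduced to prevent: for each $p$ there is at most one $i$ with $p\relr q-ir!p$, so the sets $\PP_1(\vect{\mathbf{a}},q;i)$ are disjoint in $p$. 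Hence the probability that $q$ survives is \emph{exactly} $\prod_{i,\,p\in\PP_1(\vect{\mathbf{a}},q;i)}\bigl(1-1/\#\QQ(\vect{\mathbf{a}},p)\bigr)$, which the uniform cardinality bounds from Lemma \ref{sieveunited} evaluate as $\exp(-(1+o(1))\log r)$. No correlation inequalities are needed. A further small point: once $\vect{\mathbf{a}}$ is fixed good, the bound $\E_{\vect{\mathbf{q}}}\#\QQ_1(\vect{\mathbf{a}},\vect{\mathbf{q}})\leq x/(5\log x)$ already yields a deterministic good choice of $\vect{q}$ by the first-moment method, so the extra Markov step and union bound you invoke in (4) over $\vect{\mathbf{q}}$ are unnecessary; the only probabilistic cost lives in the choice of $\vect{\mathbf{a}}$, which is where the factor $\eps$ in the statement comes from.
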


Indeed, from this theorem (and taking $\eps$ going to zero sufficiently slowly) we may find $\vect{\mathbf{a}}$ such that the conclusions of this theorem hold simultaneously with \eqref{qq-qq0}, and by combining the residue classes from $\vect{\mathbf{a}}$ with the residue classes $q_p \pmod{p}$ for $p \in \PP_0$ from Theorem \ref{fourth-red} (and selecting residue classes arbitrarily for $p \in \PP \backslash \PP_0$), we obtain Theorem \ref{third-red}.

It remains to establish Theorem \ref{fourth-red}.  Note now (from Lemma \ref{QQ1_normal}) that we only need to reduce the size of the surviving set $\QQ_0(\vect{\mathbf{a}})$ through sieving by a constant factor (comparable to $\frac{r}{\log r}$), rather than by a factor like $y/x$ that goes to infinity as $x \to \infty$.

Recall from the previous section that we had the relation $\relr$ between $\PP$ and $\QQ$.  We now refine this relation to a (random) relation between $\PP_0$ and $\QQ(\vect{\mathbf{a}})$ as follows.  If $p \in \PP_0$ and $q \in \QQ(\vect{\mathbf{a}})$, we write $p \relra q$ if $p \relr q$ and if the arithmetic progression $\{ q, q+r!p, \dots, q+(r-1)r!p \}$ is contained in $\QQ(\vect{\mathbf{a}})$ (i.e. the entire progression survives the second sieving process).  

Intuitively, if $p \in \PP_0$ and $q \in \QQ$ are such that $p \relr q$, we expect $p \relra q$ to occur with probability close to $\gamma^r$.  The following lemma makes this intuition precise (compare with Lemma \ref{Q0}):

\begin{lem}\label{sieveunited} 
Let $\eps > 0$ be a quantity going to zero arbitrarily slowly as $x \to \infty$.
Then with probability at least $\eps$, we can find (random) subsets
$\PP_1(\vect{\mathbf{a}})$  of $\PP_0$ and $\QQ_1(\vect{\mathbf{a}})$
of $\QQ_0(\vect{\mathbf{a}})$ obeying the cardinality
bounds
\begin{equation}\label{ppqq1}
\# \PP_1(\vect{\mathbf{a}}) \asym \frac{x}{2\log x}; \quad \# \QQ_1(\vect{\mathbf{a}}) \asym \# \QQ_0(\vect{\mathbf{a}}) \asym \frac{r}{2\log r} \frac{x}{\log x},
\end{equation}
such that
$$ \# \{ q \in \QQ(\vect{\mathbf{a}}): p \relra q - ir! p \} \asym \gamma^r\alpha_r\frac y{\log^r x}$$
for \emph{all} $p \in \PP_1(\vect{\mathbf{a}})$ and $0 \leq i \leq r-1$, and such that
$$ \# \{ p \in \PP_1(\vect{\mathbf{a}}):  p \relra q - ir! p \} \asym \gamma^{r-1} \alpha_r \frac{x}{2\log^r x} $$
for \emph{all} $q \in \QQ_1(\vect{\mathbf{a}})$ and $0 \leq i \leq
r-1$.  (The implied $o(1)$ errors in the $\asym$ notation may depend on $\eps$.)
\end{lem}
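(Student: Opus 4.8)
The plan is a ``double counting plus second moment'' argument, run over the extra randomness in $\vect{\mathbf{a}}$. Fix $0\le i\le r-1$; for $p\in\PP_0$ and $q\in\QQ$ with $p\relr q-ir!p$ write $\Lambda_i(p,q):=\{q-ir!p,\dots,q+(r-1-i)r!p\}$ for the associated progression of length $r$ (it has $q$ as its $i$-th term), so that $p\relra q-ir!p$ holds precisely when $p\relr q-ir!p$ and $\Lambda_i(p,q)\subset\QQ(\vect{\mathbf{a}})$; note in particular that the set $\{q\in\QQ(\vect{\mathbf{a}}):p\relra q-ir!p\}$ appearing in the lemma is the same as $\{q\in\QQ:p\relra q-ir!p\}$. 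Since every $s\in\cS_2$ satisfies $\log x<s\le z$, hence $s>r$ and $\gcd(s,r!p)=1$, the $r$ terms of $\Lambda_i(p,q)$ occupy $r$ distinct residue classes modulo $s$. Consequently the unconditional probability that $\Lambda_i(p,q)$ survives the second sieving is exactly $\gamma_r:=\prod_{s\in\cS_2}(1-r/s)$, while, conditioned on the event $B_q$ that $q$ itself survives, it is $\gamma'_r:=\prod_{s\in\cS_2}\frac{s-r}{s-1}$. An elementary computation (using $\cS_2\subset(\log x,z]$ and Mertens) gives $\gamma_r=(1+o(1))\gamma^r$ and $\gamma'_r=(1+o(1))\gamma^{r-1}$ --- this is the source of the $r$ versus $r-1$ discrepancy between the two conclusions of the lemma, the point being that for the $\QQ_1$-estimate the prime $q$ has already been ``paid for''.

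Next I would compute moments. By Lemma~\ref{Q0}, for $p\in\PP_0$ we get $\E\,\#\{q\in\QQ:p\relra q-ir!p\}=\gamma_r\,\#\{q:p\relr q-ir!p\}=(1+o(1))\mu$, where $\mu:=\alpha_r\gamma^r\,y/\log^r x\to\infty$; and for $q\in\QQ_0$, $\E[\#\{p\in\PP_0:p\relra q-ir!p\}\mid B_q]=\gamma'_r\,\#\{p\in\PP_0:p\relr q-ir!p\}=(1+o(1))\nu$, where $\nu:=\alpha_r\gamma^{r-1}\,x/(2\log^r x)\to\infty$. For the second moments I rely on two combinatorial disjointness facts. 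First, for a fixed $p$ and a fixed $i$, distinct $q,q'$ with $p\relr q-ir!p$ and $p\relr q'-ir!p$ have $\Lambda_i(p,q)\cap\Lambda_i(p,q')=\emptyset$: an overlap would place $q_0+r\cdot r!p$ (where $q_0$ is the first term of $\Lambda_i(p,q)$) inside $\Lambda_i(p,q')\subset\QQ$, contradicting the defining property of $\relr$. Second, for a fixed $q$ and a fixed $i$, distinct $p,p'$ have $\Lambda_i(p,q)\cap\Lambda_i(p',q)=\{q\}$, since the remaining terms are $q+kr!p$ with $1\le|k|\le r-1<p$ and $\gcd(p,p')=1$. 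Feeding these into the product formulas for $\PR[\Lambda_i(p,q),\Lambda_i(p,q')\subset\QQ(\vect{\mathbf{a}})]$ (respectively the $B_q$-conditional version for $p,p'$), and noting that two of our progressions can create a correlation only at those $s\in\cS_2$ dividing one of the $O(r^2)$ nonzero pairwise differences of their terms --- at most $O_r(\log x/\log_2 x)$ values of $s$, each contributing a factor $1+O(1/\log x)$ --- yields $\E\,(\#\{q\in\QQ:p\relra q-ir!p\})^2=(1+o(1))\mu^2$ uniformly for $p\in\PP_0$ and $\E[(\#\{p\in\PP_0:p\relra q-ir!p\})^2\mid B_q]=(1+o(1))\nu^2$ uniformly for $q\in\QQ_0$; equivalently, the two variances are $o(\mu^2)$ and $o(\nu^2)$.

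With the moments in hand I would define $\PP_1(\vect{\mathbf{a}})$ to be the set of $p\in\PP_0$ for which $\#\{q\in\QQ(\vect{\mathbf{a}}):p\relra q-ir!p\}=(1+o(1))\mu$ for all $0\le i\le r-1$ (fixing a slowly decaying tolerance). Chebyshev and the variance bound give $\PR[p\notin\PP_1(\vect{\mathbf{a}})]=o(1)$ uniformly in $p$, hence $\E\,\#(\PP_0\setminus\PP_1(\vect{\mathbf{a}}))=o(x/\log x)$, and Markov gives $\#\PP_1(\vect{\mathbf{a}})\asym\#\PP_0\asym x/(2\log x)$ with probability $1-o(1)$; this is already the first displayed estimate of the lemma. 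For the second, fix $i$ and, for $q\in\QQ_0(\vect{\mathbf{a}})=\{q\in\QQ_0:B_q\}$, split
\[ \#\{p\in\PP_1(\vect{\mathbf{a}}):p\relra q-ir!p\}=\#\{p\in\PP_0:p\relra q-ir!p\}-\#\{p\in\PP_0\setminus\PP_1(\vect{\mathbf{a}}):p\relra q-ir!p\}. \]
The first term equals $(1+o(1))\nu$ for all but $o(x/\log x)$ of $q\in\QQ_0(\vect{\mathbf{a}})$, with probability $1-o(1)$: apply Chebyshev conditionally on $B_q$ using the conditional variance bound, then Markov over $q$, using $\E\#\QQ_0(\vect{\mathbf{a}})=\gamma\,\#\QQ_0\asymp x/\log x$. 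For the second (error) term I would use Cauchy--Schwarz on $\vect{\mathbf{a}}$:
\[ \E\sum_{p\in\PP_0}\mathbf 1_{p\notin\PP_1(\vect{\mathbf{a}})}\,\#\{q\in\QQ:p\relra q-ir!p\}\le\sum_{p\in\PP_0}\PR[p\notin\PP_1(\vect{\mathbf{a}})]^{1/2}\big(\E(\#\{q:p\relra q-ir!p\})^2\big)^{1/2}\le(1+o(1))\mu\cdot o(x/\log x), \]
the last step using a further Cauchy--Schwarz over $p$ together with $\sum_{p\in\PP_0}\PR[p\notin\PP_1(\vect{\mathbf{a}})]=o(x/\log x)$. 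Since $\sum_{q\in\QQ_0(\vect{\mathbf{a}})}\#\{p\in\PP_0\setminus\PP_1(\vect{\mathbf{a}}):p\relra q-ir!p\}$ is bounded by the left-hand side above, Markov over $q$ (using $\mu\asym\frac{r}{\log r}\nu$, so $\mu=O_r(\nu)$) shows the error term is $o(\nu)$ for all but $o(x/\log x)$ of $q\in\QQ_0(\vect{\mathbf{a}})$, with probability $1-o(1)$. Taking $\QQ_1(\vect{\mathbf{a}})$ to be those $q\in\QQ_0(\vect{\mathbf{a}})$ good in both respects for every $i$, and recalling $\#\QQ_0(\vect{\mathbf{a}})\asym\frac{r}{2\log r}\frac{x}{\log x}$ from Lemma~\ref{QQ1_normal}, intersecting the finitely many $1-o(1)$ events yields $\#\QQ_1(\vect{\mathbf{a}})\asym\#\QQ_0(\vect{\mathbf{a}})$ and the second displayed estimate. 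In particular the conclusion holds with probability at least $\eps$ for any $\eps\to 0$.

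The main obstacle is exactly the last step: the target size $\nu$ of the $\PP$-degrees is smaller than the cardinality $o(x/\log x)$ of the discarded set $\PP_0\setminus\PP_1(\vect{\mathbf{a}})$ by the \emph{unbounded} factor $\gamma^{-(r-1)}$, so a crude ``discard at most $o(x/\log x)$ primes'' bound is useless. The argument survives because a discarded prime is ``bad'' only with probability $o(1)$ \emph{and} carries only $O(\mu)$ neighbours on average --- and the latter is precisely where the combinatorial disjointness facts are needed, to force the second moment of the $q$-count to match the square of its mean. Checking that all the $o(1)$'s are uniform (over $p\in\PP_0$, over $q\in\QQ_0$, and over the pairs arising in the second moments, including the $1+O(1/\log x)$ correction coming from small prime divisors of the pairwise differences) is the bulk of the remaining, routine, work.
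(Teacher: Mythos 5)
Your proof is correct and follows the same broad strategy as the paper's: establish concentration for the individual degree counts by the second moment method (using the combinatorial disjointness facts you state, which also appear implicitly in the paper's treatment of \eqref{iso-1}--\eqref{iso-2a}), define $\PP_1(\vect{\mathbf{a}})$ as the set of $p$ satisfying the degree estimate, and then control the contribution of $\PP_0 \setminus \PP_1(\vect{\mathbf{a}})$ to the $\PP$-degree of a typical $q$ by a double-counting argument. You correctly identify that the last step is the crux, since the target size $\nu \asym \gamma^{r-1}\alpha_r x/(2\log^r x)$ is much smaller than the trivial discard bound $o(x/\log x)$. Your conditioning-on-$B_q$ bookkeeping is equivalent to the paper's device of computing unconditional moments of the sum $\sum_{q\in\QQ_0(\vect{\mathbf{a}})}|\,\cdot\,|^2$ (as in the paper's \eqref{est-1}); both routes isolate the same conditional mean and variance.

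Where you genuinely diverge from the paper is in bounding the error term. The paper estimates the \emph{total} count $\#\{(p,q)\in\PP_0\times\QQ_0(\vect{\mathbf{a}}):p\relra q\}$ by first-moment Markov, which only yields an upper bound of $(1+o(1))/(1-4\eps)$ times the mean with probability $\ge 4\eps$ (there is no cheap second moment for this bilinear count); this is precisely what forces the awkward ``probability at least $\eps$'' formulation of the lemma, and the final step is obtained by subtracting the $\asym$-sharp count over $\PP_1(\vect{\mathbf{a}})\times\QQ_0(\vect{\mathbf{a}})$. You instead bound the error count directly via the term-by-term Cauchy--Schwarz $\E[\mathbf{1}_{p\notin\PP_1}X_p]\le\PR[p\notin\PP_1]^{1/2}(\E X_p^2)^{1/2}$, which couples the rarity of $p\notin\PP_1(\vect{\mathbf{a}})$ with the uniform $L^2$ bound on $X_p$, followed by a second Cauchy--Schwarz over $p$. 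This is cleaner, exploits the smallness of $\PR[p\notin\PP_1]$ directly, and upgrades the conclusion to probability $1-o(1)$ (which of course still implies the stated ``$\ge\eps$''). The trade-off is minor: you rely on the uniform bound $\E X_p^2=(1+o(1))\mu^2$ for all $p\in\PP_0$, which you have anyway from the degree-concentration computation. Both approaches are sound; yours is, if anything, a small improvement in both economy and strength of conclusion.
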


This lemma is also proven by an application of the second moment method; we defer this proof also to Section \ref{sec:prob}.

We are now ready to perform the third sieving process.
 Let us fix any $\vect{\mathbf{a}}$ obeying the
properties in Lemma \ref{sieveunited}, and let
$\PP_1(\vect{\mathbf{a}})$ and $\QQ_1(\vect{\mathbf{a}})$ be as in
that lemma.  Since $\vect{\mathbf{a}}$ has the desired properties 
with probability at least $\eps$, in order to establish
Theorem \ref{fourth-red} (and thus Theorem \ref{mainthm-y} and Theorem
\ref{mainthm}), it suffices to show that for every such
 $\vect{\mathbf{a}}$, there is a choice of residue classes $q_p$ for 
$p\in \PP_0$ satisfying the required union property for Theorem
\ref{fourth-red}.

For each $p \in \PP_1(\vect{\mathbf{a}})$, we select $\mathbf{q}_p$ uniformly at random from the set
\begin{equation}\label{Bp-def}
\QQ(\vect{\mathbf{a}}, p) := \{ q \in \QQ(\vect{\mathbf{a}}): p \relra q \}, 
\end{equation}
with the $\mathbf{q}_p$ for $p \in \PP_1(\vect{\mathbf{a}})$ being chosen independently (after $\vect{\mathbf{a}}$ has been fixed); note from Lemma \ref{sieveunited} that
\begin{equation}\label{Bp-size}
\# \QQ(\vect{\mathbf{a}}, p) \asym \gamma^r\alpha_r\frac y{\log^r x}
\end{equation}
for all $p \in \PP_1(\vect{\mathbf{a}})$.  We write
$\vect{\mathbf{q}}$ for the random tuple $(\mathbf{q}_p)_{p \in
  \PP_1(\vect{\mathbf{a}}) }$, and for brevity write
 $\PR_\vect{\mathbf{q}}$ and $\E_\vect{\mathbf{q}}$ for the associated 
probability and expectation with respect to this random tuple 
(where $\vect{\mathbf{a}}$ is now fixed).
  Let $\QQ_1(\vect{\mathbf{a}}, \vect{\mathbf{q}})$ denote the elements of $\QQ_1(\vect{\mathbf{a}})$ that are not covered by any of the arithmetic progressions $\{ \mathbf{q}_p + i r! p: 0 \leq i \leq r-1\}$ for each $p \in \PP_1(\vect{\mathbf{a}})$.  We claim that
\begin{equation}\label{mbq}
 \E_\vect{\mathbf{q}}  \# \QQ_1(\vect{\mathbf{a}}, \vect{\mathbf{q}}) \leq  \frac{x}{5\log x}.
\end{equation}
This implies (for each fixed choice of $\vect{\mathbf{a}}$) the existence of a vector $\vect{q}$ with
$$  \# \QQ_1(\vect{\mathbf{a}}, \vect{q}) \leq  \frac{x}{5\log x};$$
since $\# (\QQ_0(\vect{\mathbf{a}}) \backslash
\QQ_1(\vect{\mathbf{a}})) = o(x/\log x)$ from \eqref{ppqq1}, Theorem
\ref{fourth-red} follows (upon choosing 
$q_p$ as the $p$ component of $\vect{q}$ for  $p\in\PP_1(\vect{\mathbf{a}})$ and  $q_p$ arbitrarily for $p \in \PP_0 \backslash \PP_1(\vect{\mathbf{a}})$).

It remains to prove \eqref{mbq}.  We will shortly show that
\begin{equation}\label{pqr}
 \PR_\vect{\mathbf{q}} ( q \in \QQ_1(\vect{\mathbf{a}}, \vect{\mathbf{q}}) ) \leq \frac{1+o(1)}{r}
\end{equation}
for each $q \in \QQ_1(\vect{\mathbf{a}})$.  Assuming
this bound, then from \eqref{ppqq1} and linearity of expectation we
have
$$
 \E_\vect{\mathbf{q}}  \# \QQ_1(\vect{\mathbf{a}}, \vect{\mathbf{q}}) \leq \frac{1+o(1)}{r} \frac{r}{2\log r} \frac{x}{\log x}$$
which gives \eqref{mbq} as desired for $r \geq 13$.

It remains to prove \eqref{pqr}.  Fix $q \in \QQ_1(\vect{\mathbf{a}})$, and consider the sets
$$ \PP_1(\vect{\mathbf{a}},q; i) := \{ p \in \PP_1(\vect{\mathbf{a}}):  p \relra q - ir! p \}$$
for $i=0,\dots,r-1$.  From Lemma \ref{disj}, these sets are disjoint; from Lemma \ref{sieveunited}, these sets each have cardinality $(1+o(1)) \gamma^{r-1} \alpha_r \frac{x}{2\log^r x}$.

Suppose that $0 \leq i \leq r-1$ and $p \in \PP_1(\vect{\mathbf{a}},q;
i)$.  Then $q-ir! p \in \QQ(\vect{\mathbf{a}},p)$ by \eqref{Bp-def},
and the probability that $\mathbf{q}_p=q-ir!p$ is equal to
$$  \frac{1}{\# \QQ(\vect{\mathbf{a}}, p)} = \frac{1+o(1)}{\gamma^r\alpha_r\frac y{\log^r x} }$$
thanks to \eqref{Bp-size}. By independence, the probability that $\mathbf{q}_p \neq q-ir!p$ for all $0 \leq i \leq r-1$ and $p \in \PP_1(\vect{\mathbf{a}},q;i)$ (which is a necessary condition for $q$ to end up in $\QQ_1(\vect{\mathbf{a}}, \vect{\mathbf{q}})$) is thus
\begin{align*}
 \prod_{i=0}^{r-1} \; \prod_{p \in \PP_1(\vect{\mathbf{a}},q; i)}\left( 1 - \frac{1+o(1)}{\gamma^{r} \alpha_r \frac{y}{\log^r x} } \right)
&= \exp\left( - \frac{1+o(1)}{\gamma^{r} \alpha_r \frac{y}{\log^r x} } r (1+o(1)) \gamma^{r-1}\alpha_r\frac x{2\log^r x} \right) \\
&= \exp\left( - (1+o(1)) \frac{rx}{2y\gamma} \right ) \\
&= \exp( - (1+o(1)) \log r )
\end{align*}
by \eqref{gamma-form}.  The claim \eqref{pqr} follows.

\section{Probability estimates}\label{sec:prob}
%

In this section we establish the results left unproven in the last
section, namely Lemmas \ref{QQ1_normal} and \ref{sieveunited}.  Our primary tool here will be the second moment method.  Throughout, 
the probabilistic quantities we write are
all with respect to the random choice of the vector
$\vect{\mathbf{a}}=(\mathbf{a}_s)_{s\in S_2}$.
In several of these proofs we will make use of the quantities $\gamma_i$ defined by
\begin{equation}\label{gammai-def}
\gamma_i:=\prod_{s\in \cS_2} \left(1-\frac is\right)
\end{equation}
for $i = 1,\dots, 2r$. Note that $\gamma_1 = \gamma$ in the notation of the previous section.

\begin{lem}\label{gam-bounds}
We have $\gamma_i \asym \gamma^i$, uniformly for all $1 \leq i \leq 2r$.

\end{lem}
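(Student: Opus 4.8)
The plan is to compare $\gamma_i = \prod_{s \in \cS_2}(1 - i/s)$ with $\gamma^i = \prod_{s \in \cS_2}(1 - 1/s)^i$ by taking logarithms and estimating the difference of the two sums term by term, using that every prime $s \in \cS_2$ satisfies $s > \log x$, which is large compared to the fixed parameter $i \leq 2r$. Since $i/s = O(r/\log x) = o(1)$ uniformly over $s \in \cS_2$ and over $1 \leq i \leq 2r$, all the logarithms below are of quantities close to $1$, so Taylor expansion is legitimate.

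First I would write
\[
\log \gamma_i - \log \gamma^i = \sum_{s \in \cS_2} \left[ \log\left(1 - \frac{i}{s}\right) - i \log\left(1 - \frac1s\right) \right].
\]
Using $\log(1-t) = -t + O(t^2)$ for $|t| \leq 1/2$, the $s$-th term is
\[
\left(-\frac{i}{s} + O\!\left(\frac{i^2}{s^2}\right)\right) - i\left(-\frac1s + O\!\left(\frac1{s^2}\right)\right) = O\!\left(\frac{i^2}{s^2}\right) = O\!\left(\frac{r^2}{s^2}\right),
\]
where the linear terms $-i/s$ cancel exactly. Summing over $s \in \cS_2$ and bounding $\sum_{s \in \cS_2} s^{-2} \leq \sum_{s > \log x} s^{-2} = O(1/\log x)$, we obtain
\[
\log \gamma_i - \log \gamma^i = O\!\left( \frac{r^2}{\log x} \right) = o(1),
\]
uniformly in $1 \leq i \leq 2r$ (recall $r$ is fixed, so $r^2/\log x \to 0$ as $x \to \infty$). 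Exponentiating gives $\gamma_i = (1 + o(1)) \gamma^i$, which is exactly $\gamma_i \asym \gamma^i$ in the paper's notation, with the $o(1)$ uniform over the stated range of $i$.

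There is essentially no serious obstacle here; the only thing to be careful about is the uniformity in $i$ and the fact that the implied constants must not depend on $x$ or $y$ — both are handled automatically because the error bound $O(r^2/\log x)$ depends only on $r$ (which is permitted to appear in constants in this part of the paper) and tends to $0$ with $x$. One should also note explicitly that $\gamma_i > 0$ for all $i \leq 2r$, which is clear since $s > \log x > 2r$ for $x$ large, so each factor $1 - i/s$ is positive; this justifies taking logarithms in the first place.
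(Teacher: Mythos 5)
Your proof is correct and is essentially the same as the paper's: the paper writes $\gamma_i = \gamma^i \prod_{s \in \cS_2}(1-i/s)(1-1/s)^{-i} = \gamma^i \prod_{s\in\cS_2}(1 + O(s^{-2})) = \gamma^i(1 + O(1/\log x))$, which is exactly the multiplicative form of your term-by-term log comparison; the cancellation of the $O(s^{-2})$ errors over $s > \log x$ is the identical observation in both.
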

\begin{proof} We have, uniformly for $1 \leq i \leq 2r$,
\[
\gamma_i=\gamma^{i}\prod_{s\in \cS_2} \left(1-\frac is\right)
\left(1-\frac 1s\right)^{-i}
=\g^i\prod_{s\in \cS_2}\big(1+O(s^{-2})\big)
=\g^i(1+O(1/\log x)),
\]
using the fact that all primes $s \in \cS_2$ are $> \log x$.
\end{proof}

\subsection{Proof of Lemma \ref{QQ1_normal}}

To prove Lemma \ref{QQ1_normal} we use the second moment method.  Indeed, from Chebyshev's inequality it will suffice to prove the asymptotics
\begin{equation}\label{mean-1}
\E \# \QQ(\vect{\mathbf{a}}) \asym \gamma \frac{y}{\log x}
\end{equation}
and
\begin{equation}\label{mean-2}
\E (\# \QQ(\vect{\mathbf{a}}))^2 \asym \left(\gamma \frac{y}{\log x}\right)^2.
\end{equation}
The claim \eqref{mean-1} is just \eqref{qa}, so we turn to \eqref{mean-2}. 
The left-hand side of \eqref{mean-2} may be written as
$$  \sum_{q_1, q_2 \in \QQ} \PR( q_1,q_2 \in \QQ(\vect{\mathbf{a}}) ).$$
The diagonal contribution $q_1=q_2$ is clearly negligible (it is crudely bounded by $\# \QQ$, which is much smaller than $\left(\gamma \frac{y}{\log x}\right)^2$), so by \eqref{pq} it suffices to show that
$$ \PR( q_1,q_2 \in \QQ(\vect{\mathbf{a}}) ) \asym \gamma^2$$
for any \emph{distinct} $q_1, q_2 \in \QQ$.

Fix any such $q_1,q_2$.  Observe that for each $s \in \cS_2$, the probability that $q_1$ and $q_2$ simultaneously avoid $\mathbf{a}_s \pmod{s}$ is equal to $1-\frac{2}{s}$ if $s$ does not divide $q_2-q_1$, and $1-\frac{1}{s}$ otherwise.  In the latter case, we crudely write $1-\frac{1}{s}$ as $(1 + O(\frac{1}{\log x})) (1-\frac{2}{s})$.  Since $q_2-q_1 = O(y)$ and all the primes in $\cS_2$ are at least $\log x$, we see that there are at most $O(\frac{\log y}{\log \log x}) = o(\log x)$ primes $s$ that divide $q_2-q_1$.  We conclude that
$$ \PR( q_1,q_2 \in \QQ(\vect{\mathbf{a}}) ) = \left(1 +
O\pfrac{1}{\log x}\right)^{o(\log x)} \prod_{s \in \cS_2}
\left(1-\frac{2}{s}\right) \asym \gamma_2,$$
and the claim now follows from Lemma \ref{gam-bounds}.

\subsection{A preliminary lemma}

In order to establish Lemma \ref{sieveunited}, we will first need the following preliminary result in this direction.

\begin{lem}\label{QQ1p-normal}  The following two claims hold with probability $1-o(1)$ (in the random choice of $\vect{\mathbf{a}}$), and for any $0 \leq i \leq r-1$.
\begin{itemize}
\item[(i)]  One has
\begin{equation}\label{po}
\# \{ q \in \QQ(\vect{\mathbf{a}}): p \relra q - ir! p \} \asym \gamma^r \alpha_r \frac{y}{\log^r x} \asym \gamma^r \# \{ q \in \QQ: p \relr q-ir! p \}
\end{equation}
for all but $o(x/\log x)$ values of $p \in \PP_0$.
\item[(ii)] One has
\begin{equation}\label{qo}
 \# \{ p \in \PP_0: p \relra q - ir! p \} \asym \gamma^{r-1} \alpha_r \frac{x}{2\log^r x} \asym \gamma^{r-1} \# \{ p \in \PP_0: p \relr q-ir! p \}
\end{equation}
for all but $o(x/\log x)$ values of $q \in \QQ_0(\vect{\mathbf{a}})$.
\end{itemize}
\end{lem}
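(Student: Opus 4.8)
The plan is to prove both parts of Lemma~\ref{QQ1p-normal} by the second moment method, exactly as was done for Lemma~\ref{QQ1_normal}, but now conditioning on the underlying combinatorial structure supplied by Lemma~\ref{Q0}. For part (i), fix $i$ and a prime $p \in \PP_0$, and set $N_p := \# \{ q \in \QQ : p \relr q - ir!p \}$, which by \eqref{q0-i} satisfies $N_p \asym \alpha_r y/\log^r x$. For each such $q$, the event $p \relra q - ir!p$ is the event that the entire $r$-term progression $\{q-ir!p, \dots, q+(r-1-i)r!p\}$ survives the second sieving, i.e. that $r$ specified primes all avoid their random residue classes $\mathbf{a}_s \pmod s$ for $s \in \cS_2$. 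First I would compute, for a single such progression, that the probability all $r$ of its (distinct, since the $q$'s are distinct and $p$ is large) terms avoid the random classes equals $\prod_{s \in \cS_2}(1 - \rho_s/s)$ where $\rho_s$ is the number of residue classes mod $s$ occupied by the progression; since all terms lie in an interval of length $O(y)$ and all $s \in \cS_2$ exceed $\log x$, we have $\rho_s = r$ for all but $o(\log x)$ primes $s$ (the exceptions being those $s \mid r!p$ or $s$ dividing a difference of two progression terms, but here all differences are multiples of $r!p$ so this is controlled), giving probability $\asym \gamma_r \asym \gamma^r$ by Lemma~\ref{gam-bounds}. Hence $\E \, \#\{q \in \QQ(\vect{\mathbf{a}}) : p \relra q-ir!p\} \asym \gamma^r N_p \asym \gamma^r \alpha_r y/\log^r x$.

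For the variance, I would expand the second moment as a sum over pairs $q_1, q_2$ with $p \relr q_1 - ir!p$ and $p \relr q_2 - ir!p$ of $\PR(\text{both progressions survive})$. The diagonal $q_1 = q_2$ contributes $O(N_p) = o((\gamma^r N_p)^2)$ since $\gamma^r N_p \to \infty$. For the off-diagonal terms, the two progressions together involve at most $2r$ primes from $\QQ$, and the survival probability is $\prod_{s \in \cS_2}(1 - \rho'_s/s)$ where $\rho'_s \le 2r$; again $\rho'_s = 2r$ for all but $o(\log x)$ values of $s$, because collisions between terms of the two progressions modulo $s$ force $s$ to divide one of $O(1)$ differences each of size $O(y)$. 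This gives survival probability $\asym \gamma_{2r} \asym \gamma^{2r} \asym (\gamma^r)^2$ uniformly, so the second moment is $\asym (\gamma^r N_p)^2$, and Chebyshev gives concentration for each fixed $p$. Summing the failure probabilities over $p \in \PP_0$ (or applying Markov to the expected number of ``bad'' $p$) yields that the estimate holds for all but $o(x/\log x)$ values of $p$ with probability $1 - o(1)$; taking the union bound over the $r$ values of $i$ (a fixed number) preserves this. The second asymptotic in \eqref{po} is then immediate from \eqref{q0-i}.

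Part (ii) is handled in precisely the same way with the roles of $\PP_0$ and $\QQ$ interchanged, but one must be a little careful about the bookkeeping. Fix $i$ and a prime $q \in \QQ_0$; by \eqref{q0-ii}, $M_q := \#\{p \in \PP_0 : p \relr q - ir!p\} \asym \alpha_r x/(2\log^r x)$. For each such $p$, the relevant progression $\{q-ir!p,\dots,q+(r-1-i)r!p\}$ contains $q$ itself as its $i$-th term; thus conditioning on $q \in \QQ_0(\vect{\mathbf{a}})$ (which is needed since we are counting within $\QQ_0(\vect{\mathbf{a}})$), the event $p \relra q-ir!p$ requires only the \emph{other} $r-1$ terms of the progression to survive the second sieving. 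This is why the normalizing factor is $\gamma^{r-1}$ rather than $\gamma^r$: the survival of $q$ is already given. So conditionally on $q \in \QQ(\vect{\mathbf{a}})$, one computes $\E\, \#\{p \in \PP_0 : p \relra q-ir!p\} \asym \gamma^{r-1} M_q$ and, by the analogous second-moment computation over pairs $p_1, p_2$ (where now the two progressions share the common term $q$, so together involve $2r-1$ primes and the survival probability is $\asym \gamma_{2r-1} \asym \gamma^{2r-1}$), the second moment is $\asym (\gamma^{r-1}M_q)^2$. Chebyshev then gives concentration for each fixed $q \in \QQ_0$, and summing failure probabilities (weighted appropriately, or via Markov on the count of bad $q \in \QQ_0(\vect{\mathbf{a}})$) over the at most $\#\QQ_0 = O(y/\log x)$ candidates shows the estimate holds for all but $o(x/\log x)$ of the $q \in \QQ_0(\vect{\mathbf{a}})$, with probability $1-o(1)$; a union bound over the $r$ values of $i$ finishes it.

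The main obstacle I anticipate is the careful treatment of the ``exceptional'' primes $s \in \cS_2$ for which the progression(s) occupy fewer than the generic number of residue classes modulo $s$, and verifying that this error is genuinely $o(\log x)$ uniformly over all the pairs $(q_1,q_2)$ (resp. $(p_1,p_2)$) appearing in the second-moment sum — one needs that the relevant differences are $O(y)$ in size and have at most $O(\log y/\log_2 x) = o(\log x)$ prime factors exceeding $\log x$, and that $r!$ contributes no primes above $\log x$ — together with confirming that the resulting multiplicative error $(1 + O(1/\log x))^{o(\log x)} = 1 + o(1)$ is uniform in the pair. The rest is routine bookkeeping with Lemma~\ref{gam-bounds} and Chebyshev, entirely parallel to the proof of Lemma~\ref{QQ1_normal} already given.
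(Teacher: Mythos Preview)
Your proposal is correct and follows essentially the same second-moment approach as the paper: the key probability computations (survival probability $\asym \gamma_r$ for a single $r$-term progression, $\asym \gamma_{2r}$ for two disjoint progressions in part~(i), and $\asym \gamma_{2r-1}$ for two progressions sharing the common term $q$ in part~(ii)) are identical to the paper's \eqref{iso-1}, \eqref{iso-2}, \eqref{iso-1a}, \eqref{iso-2a}. The only organizational difference is in part~(ii): the paper avoids conditioning and instead bounds $\E \sum_{q \in \QQ_0(\vect{\mathbf{a}})} |\text{count}-\text{target}|^2$ directly by expanding the square (their estimate \eqref{est-1} for $b=0,1,2$) and applying Markov once, whereas you condition on $q \in \QQ(\vect{\mathbf{a}})$ and apply Chebyshev per $q$ before Markov --- but this is a cosmetic rearrangement of the same computation.
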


We begin with the proof of Lemma \ref{QQ1p-normal}(i), which goes along very similar lines to that of the previous lemma.  As the quantities here do not depend on $i$, we may take $i=0$.  The second part of \eqref{po} follows from \eqref{q0-i}, so it suffices to show that with probability $1-o(1)$, we have
\begin{equation}\label{qc}
\# \{ q \in \QQ(\vect{\mathbf{a}}): p \relra q \} \asym \gamma^r \alpha_r \frac{y}{\log^r x}
\end{equation}
for all but $o(x/\log x)$ values of $p \in \PP_0$.  By Markov's inequality and \eqref{pq0}, it suffices to show that for each $p \in \PP_0$, we have the event \eqref{qc} with probability $1-o(1)$. 

Fix $p \in \PP_0$. By Chebyshev's inequality, it suffices to show that
\begin{equation*}
 \E \# \{ q \in \QQ(\vect{\mathbf{a}}): p \relra q \} \asym \gamma^r \alpha_r \frac{y}{\log^r x}
\end{equation*}
and
\begin{equation*}
 \E \big(\# \{ q \in \QQ(\vect{\mathbf{a}}): p \relra q \}\big)^2 \asym \left(\gamma^{r} \alpha_r \frac{y}{\log^r x}\right)^2.
\end{equation*} 
By \eqref{q0-i}, Lemma \ref{gam-bounds}, and linearity of expectation, it thus suffices to show that
\begin{equation}\label{iso-1}
\PR( q, q+r!p, \dots, q+(r-1)r!p \in \QQ(\vect{\mathbf{a}}) ) \asym \gamma_r
\end{equation}
for all $q \in \QQ$ with $p \relr q$, and similarly that
\begin{equation}\label{iso-2}
\PR( q_1, q_1+r!p, \dots, q_1+(r-1)r!p,q_2,q_2+r!p,\dots,q_2+(r-1)r!p \in \QQ(\vect{\mathbf{a}}) ) \asym \gamma_{2r}
\end{equation}
for any distinct $q_1,q_2 \in \QQ$ with $p \relr q_1,p \relr q_2$.

We begin with \eqref{iso-1}.  For any $s \in \cS_2$, the probability that $q,q+r!p,\dots,q+(r-1)r!p$ simultaneously avoid $\mathbf{a}_s \pmod{s}$ is equal to $1 - \frac{r}{s}$ (note that $s$ is coprime to $r!p$).  So \eqref{iso-1} then follows (with exact equality) from \eqref{gammai-def} and independence.

Now we turn to \eqref{iso-2}.  For any $s \in \cS_2$, the probability
that $q_1, q_1+r!p, \dots,
q_1+(r-1)r!p,q_2,q_2+r!p,\dots,q_2+(r-1)r!p$ simultaneously avoid
$\mathbf{a}_s \pmod{s}$ is usually equal to $1-\frac{2r}{s}$; the
exceptions arise when $s$ divides $q_2-q_1 + i r! p$ for some $-r \leq
i \leq r$, in which case the probability is instead $(1 +
O(\frac{1}{\log x})) (1-\frac{2r}{s})$.  But by arguing as in the
proof of Lemma \ref{QQ1_normal}, the number of exceptional $s$ is
$o(\log x)$.  Multiplying all the independent probabilities together,
we obtain the claim \eqref{iso-2}.  This concludes the proof of Lemma \ref{QQ1p-normal}(i).

Now we prove Lemma \ref{QQ1p-normal}(ii).  Again, the second part of
\eqref{qo} follows from \eqref{q0-ii}.  For the first part, it
suffices (by Lemma \ref{QQ1_normal} and \eqref{gamma-form}) to show that with probability $1-o(1)$, one has
$$ \sum_{q \in \QQ_0(\vect{\mathbf{a}})} \left|\# \{ p \in \PP_0: p \relra q - ir! p \} - \gamma^{r-1} \alpha_r \frac{x}{2\log^r x}\right|^2 = o\left( \gamma \frac{y}{\log x} \(\gamma^{r-1} \frac{x}{\log^r x}\)^2 \right).$$
By Markov's inequality, it suffices to show that
$$ \E \sum_{q \in \QQ_0(\vect{\mathbf{a}})} \left|\# \{ p \in \PP_0: p \relra q - ir! p \} - \gamma^{r-1} \alpha_r \frac{x}{2\log^r x}\right|^2 = o\left( \gamma \frac{y}{\log x} \(\gamma^{r-1} \frac{x}{\log^r x}\)^2 \right).$$
Expanding out the square, it suffices to show the estimate
\begin{equation}\label{est-1}
\E \sum_{q \in \QQ_0(\vect{\mathbf{a}})} \(\# \{ p \in \PP_0: p \relra
q - ir! p \}\)^b  \asym  \gamma \frac{y}{\log x} \(\gamma^{r-1} \alpha_r
\frac{x}{2\log^r x}\)^b
\end{equation}
for $b=0,1,2$.

The $b=0$ case of \eqref{est-1} follows from \eqref{pq} and \eqref{eqa}.  For the $b=1,2$ cases, observe from Lemma \ref{Q0} that
$$
\sum_{q \in \QQ_0} (\# \{ p \in \PP_0: p \relr q - ir! p \})^b \asym \left(\alpha_r \frac{x}{2\log^r x}\right)^b \frac{y}{\log x}.
$$
By linearity of expectation, it thus suffices to show that
\begin{equation}\label{iso-1a}
\PR( q-ir!p, q+(1-i)r!p,\dots, q+(r-1-i)r! p \in \QQ_0(\vect{\mathbf{a}}) ) \asym \gamma^r
\end{equation}
whenever $p \in \PP_0$, $q \in \QQ_0$ with $p \relr q-ir! p$, and
\begin{equation}\label{iso-2a}
\PR( q+jr!p_k \in \QQ_0(\vect{\mathbf{a}}) \mbox{ for all }
j=-i,1-i,\ldots,r-1-i \mbox{ and } k=1,2 ) \asym \gamma^{2r-1}
\end{equation}
whenever $p_1, p_2 \in \PP_0$, $q \in \QQ_0$ with $p_1 \relr q-ir!
p_1$, $p_2 \relr q-ir! p_2$, and $p_1 \neq p_2$ (the total
contribution of the diagonal $p_1=p_2$ is easily seen to be
negligible). 

We begin with the proof of \eqref{iso-1a}.  For any $s \in \cS_2$, the probability that the progression $q-ir!p, q+(1-i)r!p,\dots, q+(r-1-i)r! p$ avoids $\mathbf{a}_s \pmod{s}$ is equal to $1-\frac{r}{s}$ (since $s$ is coprime to $r!p$), and so by \eqref{gammai-def} and independence the left-hand side of \eqref{iso-1a} is precisely $\gamma_r$.  The claim now follows from Lemma \ref{gam-bounds}.

Now we prove \eqref{iso-2a}.  For any $s \in \cS_2$, the probability that the intersecting progressions $q-ir!p_1, q+(1-i)r!p_1,\dots, q+(r-1-i)r! p_1$ and $q-ir!p_2, q+(1-i)r!p_2,\dots, q+(r-1-i)r! p_2$ avoid $s$ is usually $1 - \frac{2r-1}{s}$ (note that $q$ is a common value of the two arithmetic progressions).  The exceptions occur when $s$ divides $jp_1+kp_2$ for some $-r \leq j,k \leq r$ that are not both zero, but by arguing as before we see that the number of such exceptions is $o(\log x)$, and the probability in these cases is $(1 + O(\frac{1}{\log x})) (1 - \frac{2r-1}{s})$.  Thus by independence, the left-hand of \eqref{iso-2a} is $\asym \gamma_{2r-1}$, and the claim follows from Lemma \ref{gam-bounds}.  The proof of Lemma \ref{QQ1p-normal} is now complete.

\subsection{Proof of Lemma \ref{sieveunited}}

Suppose that $\eps>0$ goes to zero as $x \to \infty$ sufficiently
slowly.

Let $\PP_1(\vect{\mathbf{a}})$ be the set of $p \in \PP_0$ obeying \eqref{po} for all $0 \leq i \leq r-1$ (actually the choice of $i$ is irrelevant here), then from Lemma \ref{QQ1p-normal}(i) and \eqref{pq0} we have that with probability at least $1-\eps$ we have
\begin{equation}\label{pp1}
 \# \PP_1(\vect{\mathbf{a}}) \asym \frac{x}{2\log x}
\end{equation}
as required.  From Lemma \ref{QQ1_normal} we also have $ \# \QQ_0(\vect{\mathbf{a}}) \asym \frac{r}{2\log r} \frac{x}{\log x}$ with probability at least $1-\eps$ as required.  To finish the proof of the lemma, it suffices in view of Lemma \ref{QQ1p-normal}(ii) to show that with probability at least $3\eps$, one has
$$ 
\#  \{ p \in \PP_0 \backslash \PP_1(\vect{\mathbf{a}}):  p \relra q - ir! p \} = o( \gamma^{r-1} x / \log^r x )$$
for all but $o(x/\log x)$ values of $q \in \QQ_0(\vect{\mathbf{a}})$, and any $0 \leq i \leq r-1$.  

We use a double counting argument.  It clearly suffices to show with probability at least $3\eps$ that
$$ 
\#  \{ (p,q) \in (\PP_0 \backslash \PP_1(\vect{\mathbf{a}})) \times \QQ_0(\vect{\mathbf{a}}):  p \relra q - ir! p \} = o\left( \gamma^{r-1} \frac{x}{\log^r x} \times \frac{x}{\log x} \right)
$$
for all $0 \leq i \leq r-1$.  Actually, the left-hand side does not
depend on $i$ (as can be seen by shifting $q$ by $ir! p$), so it
suffices to show that the above holds with $i=0$.
By \eqref{gamma-form}, we may rewrite this requirement as
$$ 
\#  \{ (p,q) \in (\PP_0 \backslash \PP_1(\vect{\mathbf{a}})) \times \QQ_0(\vect{\mathbf{a}}):  p \relra q \} = o\left( \gamma^r \alpha_r \frac{y}{\log^r x} \times \frac{x}{\log x} \right).
$$

Now from \eqref{po} and \eqref{pp1} we have
$$ 
\#  \{ (p,q) \in \PP_1(\vect{\mathbf{a}}) \times \QQ_0(\vect{\mathbf{a}}):  p \relra q \} \asym \gamma^r\alpha_r\frac y{\log^r x} \times \frac{x}{2\log x}  
$$
with probability at least $1-\eps$, so it suffices to show that
$$ 
\#  \{ (p,q) \in \PP_0 \times \QQ_0(\vect{\mathbf{a}}):  p \relra q \} \leq \frac{1+o(1)}{1-4\eps} \gamma^r\alpha_r\frac y{\log^r x} \times \frac{x}{2\log x}  
$$
with probability at least $4\eps$ (recall that $\eps=o(1)$).  By Markov's inequality, it thus suffices to show that
$$ 
\E \#  \{ (p,q) \in \PP_0 \times \QQ_0(\vect{\mathbf{a}}):  p \relra q \} \leq (1+o(1)) \gamma^r\alpha_r\frac y{\log^r x} \times \frac{x}{2\log x}.
$$
But this follows from the $b=1$ case of \eqref{est-1}.  The proof of Lemma \ref{sieveunited} is now complete.

\section{Linear equations in primes with large shifts}\label{dickson-shifted-sec}

The paper \cite{gt-linearprimes} of the second and fourth author is concerned with counting the number of prime points parameterized by a system of affine-linear forms in a convex body, when the constant terms in the affine-linear forms are comparable to the size of the body.
To establish Lemma \ref{first} we will require a strengthening of the main result in \cite{gt-linearprimes}, in which the constant terms in the affine-linear forms are permitted to be larger than the size of the body by a logarithmic factor. The aim of this section is to state this strengthening. The proof involves a number of minor modifications to the arguments of \cite{gt-linearprimes}: these are indicated in Appendix \ref{linear-primes-app}.

To state the results, we need to recall some notation from \cite{gt-linearprimes}.  If $d,t \geq 1$ be integers, then an \emph{affine-linear form} on $\Z^d$ is a function $\psi: \Z^d \to \Z$ which is the sum $\psi = \dot \psi + \psi(0)$ of a homogeneous linear form $\dot \psi: \Z^d \to \Z$ and a constant $\psi(0) \in \Z$.  A \emph{system of affine-linear forms} on $\Z^d$ is a collection $\Psi = (\psi_1,\ldots,\psi_t)$ of affine-linear forms on $\Z^d$. 
A system $\Psi$ is said to have finite complexity if and only if no form $\dot\psi_i$ is a multiple of any other form $\dot\psi_j$.

We recall that the \emph{von Mangoldt function} $\Lambda(n)$ is defined to equal $\log p$ when $n$ is a prime $p$ or a power of that prime, and zero otherwise.

Here is the main result of \cite{gt-linearprimes}.

\begin{gt-maintheorem}
  Let $N, d, t, L$ be positive integers, and let $\Psi = (\psi_1,\ldots,\psi_t)$ be a system of affine-linear forms of finite complexity with 
\begin{equation}\label{size-assumption}\Vert \Psi \Vert_N \leq L.  \end{equation}
Let $K \subset [-N,N]^d$ be a convex body.  Then we have
\begin{equation}\label{dickson-eq}
\sum_{ \vect{n} \in K \cap \Z^d} \prod_{i=1}^t \Lambda( \psi_i( \vect{n}) ) = \beta_\infty \prod_p \beta_p + o_{t,d,L}(N^d)
\end{equation}
where 
\[ \beta_{\infty} := \vol_d\big( K \cap \Psi^{-1}((\R^+)^t) \big)\]
and
\[ \beta_p := \E_{\vect{n} \in (\Z/p\Z)^d} \prod_{i=1}^t \Lambda_{\Z/p\Z}(\psi_i(\vect{n})).\]
Here $\Vert \Psi \Vert_N$ is defined by
\[ \Vert \Psi \Vert_N := \sum_{i=1}^t \sum_{j = 1}^d |\dot{\psi_i}(e_j)|  + \sum_{i = 1}^t \left|\frac{\psi_i(0)}{N}\right|.\]
The function $\Lambda_{\Z/p\Z}: \Z \to \R^+$ is the \emph{local von Mangoldt function}, that is the $p$-periodic function defined by setting $\Lambda_{\Z/p\Z}(b) := \frac{p}{p-1}$ when $b$ is coprime to $p$ and $\Lambda_{\Z/p\Z}(b) = 0$ otherwise.  Also, $\{e_1,\ldots,e_d\}$ is the standard basis for $\R^d$.
\end{gt-maintheorem}

Strictly speaking, the results in \cite{gt-linearprimes} were conditional on two (at the time unproven) conjectures, namely the M\"obius-Nilsequences conjecture and the inverse conjecture for the Gowers uniformity norms.  However, these conjectures have since been proven in \cite{gt-nilmobius} and \cite{GTZ} respectively, and so the above theorem is now unconditional.

The variant of this result that we shall need is that in which the condition \eqref{size-assumption} is replaced by the weaker condition 

\begin{equation}\label{non-hom-weak} \Vert \Psi \Vert_{N,B} \leq L, \end{equation}
where $B > 0$ is some constant (in fact any $B > 1$ will suffice for us). Here we have defined
\[ \Vert \Psi \Vert_{N,B} := \sum_{i=1}^t \sum_{j = 1}^d |\dot{\psi_i}(e_j)|  + \sum_{i = 1}^t \left|\frac{\psi_i(0)}{N\log^B N}\right|.\] Note that $\Vert \Psi \Vert_{N,0} = \Vert \Psi \Vert_N$.

 The conclusion is the same, except that the error term in \eqref{dickson-eq} must also depend on $B$.

\begin{thm}\label{dickson-shifted}
Let $B > 0$ be a positive quantity. Let everything be as in Theorem A, 
except assume that instead of condition \eqref{size-assumption} we have only the weaker condition \eqref{non-hom-weak}. Then we have
\[
\sum_{ \vect{n} \in K \cap \Z^d} \prod_{i=1}^t \Lambda( \psi_i(\vect{n}) ) = \beta_\infty \prod_p \beta_p + o_{t,d,L,B}(N^d),
\]
where $\beta_{\infty}$ and the $\beta_p$ are given by the same formulae as before.
\end{thm}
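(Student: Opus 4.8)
The plan is to follow the proof of Theorem~A in \cite{gt-linearprimes} essentially line by line, keeping the domain box $K\subseteq[-N,N]^d$ exactly as before but now allowing the outputs $\psi_i(\vect{n})$ of the forms to lie in an interval of length $O_L(N)$ located anywhere inside $[-C_L N\log^B N,\,C_L N\log^B N]$; write $N':=N\log^B N$ for this ``output scale''. The crucial point is that the sum is still over only $O(N^d)$ lattice points, so the target error remains $o_{t,d,L,B}(N^d)$ and \emph{not} $o(N'^d)$; keeping track of this distinction is what the whole argument is really about. As in \cite{gt-linearprimes}, the first step is the $W$-trick: put $W:=\prod_{p\le w}p$ for a slowly growing $w=w(N)$, split $K\cap\Z^d$ into residue classes $\vect{n}\equiv\vect{b}\pmod W$, and on each class write $\psi_i(\vect{n})=W\psi_i'(\vect{n}')+c_i$ where $\vect{n}'$ ranges over a box of side $\asymp N/W$ and $|c_i|=O_L(N')$. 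This reduces the claim to showing, for each admissible $\vect{b}$, that $\E_{\vect{n}'}\prod_i \big(\Lambda_{b_i,W}(\psi_i'(\vect{n}'))-1\big)=o(1)$, where $\Lambda_{b,W}(n):=\frac{\phi(W)}{W}\Lambda(Wn+b)$ is regarded as a function on an interval of $\asymp N/W$ consecutive integers whose left endpoint may be as large as $\asymp N'/W$. Expanding this product and summing over $\vect b\pmod W$ then reassembles $\beta_\infty\prod_p\beta_p$ exactly as in \cite{gt-linearprimes}: $\beta_\infty$ is literally the same volume, and $\prod_p\beta_p$ the same Euler product, both manifestly independent of the sizes of the constant terms, with convergence guaranteed by finite complexity.

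The analytic engine then needs (a) a pseudorandom majorant and (b) the generalized von Neumann theorem, and the task is to check that neither of these uses the output scale being $O(N)$. For (a) one takes the standard truncated divisor-sum majorant for the von Mangoldt function at sieve level $R=N^{\eta}$, supported on the relevant interval; its linear-forms and correlation conditions (in the sense of the transference principle of \cite{gt-thm}) hold with constants depending only on the number of forms together with divisor-type quantities for integers of size $N'$, and since the divisor function satisfies $d(n)=n^{o(1)}$ and $\omega(n)=O(\log n/\log\log n)$ these are unaffected by replacing $N$ by $N'$; the only cost is that implied constants may now depend on $B$. For (b), the proof of the generalized von Neumann inequality is iterated Cauchy--Schwarz together with changes of variables (including the reduction to an $s$-normal form) whose shift ranges are governed purely by the \emph{homogeneous} parts $\dot\psi_i$, which are still controlled by $\|\Psi\|_{N,B}\le L$; it is thus entirely insensitive to the constant terms and reduces matters to bounding a single Gowers norm $\|\Lambda_{b,W}-1\|_{U^{s}[I]}$ on an interval $I$ of $\asymp N/W$ consecutive integers, wherever $I$ is situated.

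The remaining ingredient is the ``structured'' bound $\|\Lambda_{b,W}-1\|_{U^{s}[I]}=o(1)$, uniformly in the position of $I$ up to the scale $N'/W$, and this is where I expect the real care to be required. As in \cite{gt-linearprimes} one combines the inverse theorem for the $U^s$ norms (now unconditional by \cite{GTZ}) with the orthogonality of the M\"obius function to nilsequences (now unconditional by \cite{gt-nilmobius}); one must verify that the underlying equidistribution and M\"obius-orthogonality estimates, of the shape $\sum_{M<n\le M+N}(\Lambda(n)-1)F(g(n)\Gamma)=o_F(N)$ and its M\"obius analogue, hold with the $o(N)$ uniform over all starting points $M\le N^{O(1)}$ --- which they do. The only genuine effects of allowing $M\asymp N'$ are that the local prime density $\frac1N\big(\pi(M+N)-\pi(M)\big)$ and the normalizing factor of $\Lambda_{b,W}$ change by $1+O(\log\log N/\log N)=1+o(1)$, which is harmless. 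Assembling these pieces gives $o(1)$ for each $\vect b$, hence the theorem.

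The main obstacle is therefore not conceptual but one of careful bookkeeping: one has to walk through every lemma of \cite{gt-linearprimes} and confirm that the hypothesis $\|\Psi\|_N\le L$ is invoked only (i) to bound the homogeneous coefficients $\dot\psi_i(e_j)$, which $\|\Psi\|_{N,B}\le L$ still does; (ii) as a normalization by a quantity of size $\asymp N$, each such occurrence becoming a quantity of size $\asymp N'$ and contributing only a harmless $1+o(1)$ factor; or (iii) in divisor and prime-factor counts for integers of size $O(N)$, which are unchanged in order of magnitude when $N$ is replaced by $N'=N\log^B N$. Since no step uses the outputs being $O(N)$ in an essential (rather than merely normalizing) way, the whole argument goes through, with all implied constants and rates of convergence now permitted to depend additionally on $B$. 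These verifications --- individually minor but numerous --- are exactly what is carried out in Appendix~\ref{linear-primes-app}.
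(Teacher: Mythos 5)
Your overall plan matches the paper's: follow \cite{gt-linearprimes} step by step, check that the $W$-trick and linear-algebra reductions, the pseudorandom majorant construction, the generalized von Neumann theorem, and the final nilsequence correlation bound are all insensitive (or only harmlessly sensitive) to the constant terms being as large as $N\log^B N$. Your sketches of items (a) and (b) are consistent with what the paper actually does (Proposition 6.4' and the adjacent modifications to Appendix D of \cite{gt-linearprimes}).

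However, there is a genuine error in the one step where the whole argument actually has teeth. You assert that the estimates
\[
\sum_{M<n\le M+N}(\Lambda(n)-1)F(g(n)\Gamma)=o_F(N)
\]
and its M\"obius analogue ``hold with the $o(N)$ uniform over all starting points $M\le N^{O(1)}$ --- which they do.'' This is false as stated. The M\"obius-nilsequences theorem of \cite{gt-nilmobius} gives $\big|\sum_{n\le X}\mu(n)F(g^n\Gamma)\big|\ll_{F,A}X\log^{-A}X$ for every $A$, so for a window $[M,M+N]$ one subtracts two long sums and obtains an error of size $O_{F,A}\big(M\log^{-A}M\big)$. If $M\asymp N^{C}$ with $C>1$, this is $O(N^C\log^{-A}N)$, which is nowhere near $o(N)$; no choice of $A$ helps. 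The estimate \emph{is} uniform in $M$ only up to $M\le N\log^{O(1)}N$, and that is precisely what suffices here: since the shifts are $|z|\le LN\log^B N$ (which follows from $\|\Psi\|_{N,B}\le L$ and is enforced by your own normalization $N'=N\log^B N$), one takes $A=B+2$, say, so that the bound $O_{F,A}(N'\log^{-A}N')=O_{F,B}(N\log^{-2}N)=o(N)$ absorbs the logarithmic inflation of the interval endpoint. This use of the arbitrary-power-of-$\log$ savings in $\mathrm{MN}(s)$ is the single mechanism that makes the shifted theorem true, and it is exactly the computation the paper performs at the very end of Appendix \ref{linear-primes-app} when treating the $\Lambda^\flat$ piece. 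Your proposal does not surface this mechanism --- instead it attributes the harmlessness of the shift to $1+O(\log\log N/\log N)$ density fluctuations, which is a red herring for this particular step. Relatedly, you elide the $\Lambda=\Lambda^\sharp+\Lambda^\flat$ decomposition: the $\Lambda^\sharp$ part is handled by sieve-theoretic majorant estimates (insensitive to the size of $z$, needing no bound on it at all), while it is only the $\Lambda^\flat$ part that requires the M\"obius-nilsequences theorem and hence a bound on $z$ of precisely the quality $N\log^{O(1)}N$.

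So: correct architecture, but the key quantitative input --- that $\log^{-A}$ savings absorb $\log^B$ shifts, and only $\log$-power shifts --- is both mis-stated (in the $N^{O(1)}$ claim) and left unjustified (``which they do''). You should replace the assertion about uniformity over $M\le N^{O(1)}$ by the correct range $M\le N\log^{O(1)}N$ and explicitly invoke the $\log^{-A}$ savings from \cite{gt-nilmobius} with $A$ chosen large in terms of $B$.
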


This extension in effect allows us to consider affine linear forms in
which the constant terms $\psi_i(0)$ can have size up to $\asymp N
\log^B N$, whereas in Theorem A, they are restricted to have size $O(N)$.  As mentioned above, the proof of Theorem \ref{dickson-shifted} is deferred to Appendix \ref{linear-primes-app}.

\section{Proof of Lemma \ref{first}(i)}\label{first-lemma-sec}

In this section we deduce Lemma \ref{first}(i) from Theorem \ref{dickson-shifted}. Throughout this section, $x$ and $y$ obey \eqref{xy}, all $o(1)$ terms may depend on $r$, and $\alpha_r$ is defined in \eqref{alpha-r-def}.

It suffices to prove the lemma when $x$ is an integer,
which we henceforth assume.
We first partition the range $(x/4,y]$ of $q$ into blocks of size about $x$, so that $p$ and
$q$ range over intervals of roughly the same size.
Namely, for a non-negative integer $m$ and $u \in \R$ we write
\[ I(m,u) := \Z \cap [mx, (m+1)x) \cap (x/4 , \infty) \cap [0, y - r!(r-1)u].\] Observe that
\begin{equation}\label{obs-1}
\sum_{0 \leq m \leq y/x} \# I(m,n_1) \asym y
\end{equation}
uniformly for $x/2 < n_1 \le x$ and that
\begin{equation}\label{obs-2} \# (m,n_1)= x \quad \hbox{ for all } x/2<n_1\le x
\end{equation} for all except $o(y/x)$ values of $m$, $0 \leq m \leq y/x$. We call these exceptional values of $m$ \emph{bad} and the remaining $0 \leq m \leq y/x$ obeying \eqref{obs-2} \emph{good}. Trivially $|I(m,n_1)| \leq x$ for all $m,n_1$.

We claim the following estimate:

\begin{prop}\label{sigma-prop}  We have
\begin{equation}\label{sig-def} 
\sum_{\substack{0 \leq m \leq y/x \\x/2 < n_1 \le x}} |F(m,n_1)|^2 \Lambda(n_1) = o(y x^2 )
\end{equation}
where
\begin{equation}\label{F-def} F(m,n_1) := \sum_{n_2 \in I(m,n_1)} \left(\prod_{j = 0}^{r-1} \Lambda(n_2 + jr! n_1) - \alpha_r\right).\end{equation}
\end{prop}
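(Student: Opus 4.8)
This is a second‑moment (variance) estimate, and I would prove it by expanding the square and feeding the result into the linear‑equations‑in‑primes input Theorem \ref{dickson-shifted}. Having reduced to $x$ a positive integer, note that for all but $o(y/x)$ values of $m$ — the \emph{bad} ones, namely those for which the constraints $(x/4,\infty)$ or $[0,y-r!(r-1)n_1]$ genuinely cut down $\Z\cap[mx,(m+1)x)$ — one has $I(m,n_1)=\Z\cap[mx,(m+1)x)$ \emph{independently of} $n_1$, an interval containing exactly $x$ integers. For the bad $m$ I would argue crudely: the Selberg sieve (equivalently Lemma \ref{first}(iii)) gives $\sum_{n_2\in I(m,n_1)}\prod_{j=0}^{r-1}\Lambda(n_2+jr!n_1)\ll_r x$ for every $n_1$, hence $|F(m,n_1)|\ll_r x$ and $\sum_{x/2<n_1\le x}\Lambda(n_1)|F(m,n_1)|^2\ll_r x^3$; summing over the $o(y/x)$ bad $m$ contributes only $o(yx^2)$, which is admissible.

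For a \emph{good} $m$ I expand
\[
\sum_{x/2<n_1\le x}\Lambda(n_1)|F(m,n_1)|^2=S_1(m)-\alpha_r S_2(m)-\alpha_r S_3(m)+S_4(m),
\]
where $S_1(m)$ sums $\Lambda(n_1)\prod_{j=0}^{r-1}\Lambda(n_2+jr!n_1)\prod_{j'=0}^{r-1}\Lambda(n_2'+j'r!n_1)$ over $x/2<n_1\le x$, $n_2,n_2'\in I(m,n_1)$; $S_2(m)=S_3(m)$ equals $x$ times the sum of $\Lambda(n_1)\prod_{j=0}^{r-1}\Lambda(n_2+jr!n_1)$ over $x/2<n_1\le x$, $n_2\in I(m,n_1)$ (the missing variable being summed trivially); and $S_4(m)=\alpha_r^2 x^2\sum_{x/2<n_1\le x}\Lambda(n_1)$ is elementary. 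After the translation $n_2=mx+\tilde n_2$, $n_2'=mx+\tilde n_2'$ the variables $n_1,\tilde n_2,\tilde n_2'$ range over intervals of length $\asymp x$, so I take $N\asymp x$; the relevant affine‑linear forms then read $\tilde n_2+jr!n_1+mx$ etc., with constant term $mx$ of size up to $y\le x\log x=N\log N$. This is exactly the range covered by Theorem \ref{dickson-shifted} with $B=1$ (and is precisely why the unshifted Theorem A would not suffice). For $S_1(m)$ the system consists of $n_1$ together with the $2r$ forms $\tilde n_2+jr!n_1+mx$ and $\tilde n_2'+j'r!n_1+mx$ ($0\le j,j'\le r-1$) in $d=3$ variables; it has finite complexity (no two are rational multiples: the $r!$‑shifts share the same $\tilde n_2$‑coefficient but have distinct $n_1$‑coefficients, and $\tilde n_2,\tilde n_2'$ are distinct variables), and $\|\Psi\|_{N,1}\ll_r 1$. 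For $S_2,S_3$ one has $r+1$ forms in $d=2$ variables, times a trivial count. Theorem \ref{dickson-shifted} then gives, with error $o_r(x^3)$ uniform in $m$, $S_1(m)=\beta_\infty^{(1)}\prod_p\beta_p^{(1)}+o_r(x^3)$ and $S_k(m)=x\,\beta_\infty^{(k)}\prod_p\beta_p^{(k)}+o_r(x^3)$ for $k=2,3$.

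Next I would compute the densities. Every form appearing is positive on the box, so each archimedean factor is a product of interval lengths; writing $V_m$ for the product of the $n_1$‑, $n_2$‑ and $n_2'$‑interval lengths, one checks $\beta_\infty^{(1)}=V_m$, $x\,\beta_\infty^{(2)}=x\,\beta_\infty^{(3)}=V_m$, and $x^2\sum_{x/2<n_1\le x}\Lambda(n_1)=(1+o(1))V_m$. For the singular series, split at $p=r$. For $p\le r$ we have $p\mid r!$, so each shifted form $\tilde n_2+jr!n_1+mx\equiv\tilde n_2+mx\pmod p$ regardless of $j$; the local von Mangoldt factors collapse and a direct count gives $\beta_p^{(1)}=\big(\tfrac p{p-1}\big)^{2r-2}$ and $\beta_p^{(2)}=\beta_p^{(3)}=\big(\tfrac p{p-1}\big)^{r-1}$. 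For $p>r$ the difference $r!n_1$ is a unit mod $p$, so the forms $\tilde n_2+jr!n_1+mx$ ($0\le j\le r-1$) occupy $r$ distinct residues in arithmetic progression; counting $\vec n\in(\Z/p\Z)^d$ avoiding the forbidden residues gives $\beta_p^{(1)}=\frac{p^{2r-2}(p-r)^2}{(p-1)^{2r}}$ and $\beta_p^{(2)}=\beta_p^{(3)}=\frac{p^{r-1}(p-r)}{(p-1)^r}$. Comparing with \eqref{alpha-r-def} (each product converging, its general factor being $1+O(1/p^2)$) yields $\prod_p\beta_p^{(1)}=\alpha_r^2$ and $\prod_p\beta_p^{(2)}=\prod_p\beta_p^{(3)}=\alpha_r$. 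Hence
\[
S_1(m)-\alpha_r S_2(m)-\alpha_r S_3(m)+S_4(m)=\big(\alpha_r^2-\alpha_r\cdot\alpha_r-\alpha_r\cdot\alpha_r+\alpha_r^2\big)V_m+o_r(x^3)=o_r(x^3).
\]
Summing over the at most $y/x+1$ good $m$ gives $o_r(yx^2)$, which combined with the bad‑$m$ bound proves \eqref{sig-def}.

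The main obstacle is bookkeeping rather than a single deep step: one must verify the hypotheses of Theorem \ref{dickson-shifted} — finite complexity, and $\|\Psi\|_{N,1}\ll_r 1$ with the constant term absorbing the block offset $mx\le y$ — uniformly over all good $m$, and then carry out the singular‑series computation precisely enough to witness the \emph{exact} cancellation of main terms $\alpha_r^2-\alpha_r^2-\alpha_r^2+\alpha_r^2=0$; the delicate point there is the treatment of the primes $p\le r$, where the $r!$‑collapse is exactly what makes the local factors coincide with the Euler factors of $\alpha_r$. A secondary point not to overlook is that the bad‑$m$ estimate must use the sieve bound $\sum_{n_2}\prod_j\Lambda(n_2+jr!n_1)\ll_r x$: the genuinely trivial bound $\prod_j\Lambda\le\log^r x$ would lose powers of $\log x$ and fail, whereas the sieve bound together with the hypothesis $y\ge x\sqrt{\log x}$ from \eqref{xy} is comfortably enough.
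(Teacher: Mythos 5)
Your proposal is correct and follows essentially the same route as the paper: expand the square, change variables to move the block offset $mx$ into the constant term of the forms, apply Theorem~\ref{dickson-shifted} (for which the shift extension is crucial, since $mx$ can be as large as $y\le x\log x$), compute the local densities $\beta_p$, and observe that they reproduce the Euler factors of $\alpha_r$, so the main terms $\alpha_r^2-\alpha_r^2-\alpha_r^2+\alpha_r^2$ cancel. The only cosmetic difference is in the handling of bad $m$: you separate them at the outset and control their total contribution by a Selberg-sieve upper bound $|F(m,n_1)|\ll_r x$, whereas the paper sums $\Sigma_b(m)$ over all $m$ and disposes of bad $m$ at the end using $\vol(K(m))\le x^3/2$ together with the uniform $o(x^3)$ error from Theorem~\ref{dickson-shifted} (and, for $b=1$, sandwiching $\Sigma_1(m)$ between $x\sum_{\text{good}}S_1(m)$ and $x\sum_m S_1(m)$). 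Both are sound, and your sieve bound for the bad $m$ is the right crude estimate — as you correctly note, the trivial $\Lambda\le\log y$ bound would lose $\log$ factors.
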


Let us assume this proposition for the moment and conclude the proof of Lemma \ref{first}(i).
Let $\eps=\eps(x)>0$ with $\eps$ decaying to zero sufficiently slowly.
If $n_1$ is a prime in $(x/2,x]$, say that $n_1$ is \emph{exceptional}
and write $n_1 \in \mathscr{E}$ if the number of $q$ for which $x/4 < q  < y - (r-1) r! n_1$ and $q + jr! n_1$ is prime for $j = 0,\dots, r-1$ differs from $\alpha_r y/\log^r x$ by at least $\eps y/\log^r x$. It follows straightforwardly
that if $n_1 \in \mathscr{E}$ then
\[ \Bigg| \sum_{x/4 \leq n_2 < y - (r-1) r! n_1} \prod_{j =
  0}^{r-1}\Lambda(n_2 + jr! n_1)  - \alpha_r y\Bigg| \geq
\frac{1}{2}\eps y\] if $x$ is sufficiently large. (To see
this, note that due to the restriction on the ranges of $n_1,n_2$,
$\Lambda(n_2 + j r! n_1) = \log x+O(\log_2 x)$ whenever $n_2 + jr!
n_1$ is prime. $\Lambda$ is also supported on prime powers, but the
contribution from these is negligible.)
Recall the definition \eqref{F-def} of $F(m,n_1)$.  Using
the fact that $[x/4, y - (r-1)r! n_1] = \bigcup_m I(m,n_1)$ and
\eqref{obs-1},
we conclude that
\[
\Bigg| \sum_{0\le m\le y/x} F(m,n_1) \Bigg| \ge \frac14 \eps y
\]
for sufficiently large $x$.  By Cauchy's inequality, we thus have
\[
 \sum_{0\le m\le y/x} |F(m,n_1)|^2 \ge \frac{\(\frac14 \eps
   y\)^2}{\frac{y}{x}+2}\ge \frac1{32} \eps^2 xy \qquad (n_1 \in  \mathscr{E}).
\]
Since $\Lambda(n_1) = \log n_1 \ge \log (x/2)$ for every prime $n_1$, we therefore see that the left-hand side of \eqref{sig-def} is at least
\[
\frac1{32} \eps^2 xy \log(x/2) \# \mathscr{E}.
\]
Applying \eqref{sig-def}, we conclude that $\# \mathscr{E}=o(x/\log x)$ if
$\eps$ goes to zero slowly enough, and Lemma \ref{first}(i) follows.

We now prove the proposition.  After a change of variables, the left-hand side of \eqref{sig-def} may be written as
\[
\sum_{\substack{0 \leq m \leq y/x \\x/2 < n_1 \le x}} \Bigg|\sum_{n_2 \in I(m, n_1) - mx } \Bigg(\prod_{j = 0}^{r-1}\Lambda(n_2 + jr! n_1 + mx)  - \alpha_r\Bigg) \Bigg|^2 \Lambda(n_1).\]
Expanding out the square, we can write this expression as
\[ \sum_{0 \leq m \leq y/x} \Sigma_2(m) - 2\alpha_r \sum_{0 \leq m
  \leq y/x} \Sigma_1(m) + \alpha_r^2 \sum_{0 \leq m \leq y/x}
\Sigma_0(m)\]
 where $\Sigma_2(m), \Sigma_1(m), \Sigma_0(m)$ are the quantities
\begin{align*}
\Sigma_2(m)  &:=  \sum_{\substack{x/2 < n_1 \le x \\n_2 \in I(m, n_1) - mx \\ n_3 \in I(m,n_1) - mx }}  \Lambda(n_1) \prod_{\substack{0 \leq j \leq r-1 \\ \ell = 2,3}}\Lambda(n_{\ell} + jr! n_1 + mx)\\
\Sigma_1(m) &:=  \sum_{\substack{x/2 < n_1 \le x \\ n_2 \in I(m, n_1) - mx }} (\# I(m,n_1)) \Lambda(n_1)  \prod_{j = 0}^{r-1} \Lambda(n_2 + jr! n_1 + mx)\\
\Sigma_0(m) &:= \sum_{x/2 < n_1 \le x} (\# I(m,n_1))^2 \Lambda(n_1).
\end{align*}
To prove \eqref{sig-def}, it will thus suffice to establish the estimates
\begin{equation}\label{sigo}
\sum_{0 \leq m \leq y/x} \Sigma_b(m) \asym \alpha_r^b \frac{yx^2}{2}
\end{equation}
for $b=0,1,2$.

We begin with the $b=2$ case, which is the most difficult.  We apply Theorem \ref{dickson-shifted} with $d := 3$, $t := 2r + 1$, and the forms $\Psi = (\psi_1,\dots, \psi_{2r+1})$ given by
\[ \Psi(n_1,n_2,n_3) := (n_1,  (n_{\ell} + jr! n_1 + mx)_{0 \leq j \leq r-1, \ell =
  2,3})\] and convex polytope $K = K(m)$ given by
\[ K(m) := \{ (u_1,u_2,u_3) \in \R^3 : x/2 < u_1 \le x, u_2,u_3 \in I(m,u_1) - mx \}. \]
 Since $\Psi(K(m)) \subset (\R^{+})^{2r + 1}$, it follows from Theorem \ref{dickson-shifted} that
\begin{equation}\label{sig-2m} \Sigma_2(m) = \vol(K(m)) \prod_p \beta_p + o(x^3),\end{equation}
where
\[ \beta_p := \E_{\vect{n} \in (\Z/p\Z)^3} \prod_{i = 1}^{2r +1} \Lambda_{\Z/p\Z}(\psi_i(\vect{n})).\]
Obviously the system $\Psi$ has finite complexity.

We claim that
\begin{equation}\label{bp-claim} \beta_p = \left\{ \begin{array}{ll} (\frac{p}{p-1})^{2(r - 1)} & p \leq r \\ \pfrac{(p-r)p^{r-1}}{(p-1)^r}^2 & p > r.\end{array}  \right. \end{equation}
The proof of the claim is quite straightforward. Indeed if $p \leq r$ then, modulo $p$, $n_2 + jr! n_1 + mx \equiv n_2 + mx$ and $n_3 + jr! n_1 + mx \equiv n_3 + mx$, and so all the forms $\psi_i(\vect{n})$ are coprime to $p$ if and only if none of $n_1,n_
2 + mx$ or $n_3 + mx$ is zero mod $p$. Thus the number of $\vect{n} = (n_1, n_2,n_3)$ for which all of the forms $\psi_i(\vect{n})$ are nonzero mod $p$ is precisely $(p-1)^3$.

If, by contrast, $p > r$ then either $n_1 \equiv 0 \pmod{p}$ or else the values of $n_2+ jr! n_1 + mx$, $0 \leq j < r$ are all distinct mod $p$, and hence at most one of them can be zero. The same is true for the values of $n_3 + jr! n_1 + mx$. Thus if $n_
1 \not\equiv 0 \pmod{p}$ then there are $r$ values of $n_2$ for which one of the forms $\psi_i(\vect{n})$ vanishes, and also $r$ values of $n_3$ for which one of these forms vanishes, and thus $2rp - r^2$ pairs $(n_2,n_3)$ in total. Thus in this case the number of $\vect{n} = (n_1, n_2,n_3)$ for which all of the forms $\psi_i(\vect{n})$ are nonzero mod $p$ is $p^3 - p^2 - (p-1)(2rp - r^2) = (p-1)(p-r)^2$, and this confirms the formula for $\beta_p$.

It follows from the claim \eqref{bp-claim} and the definition \eqref{alpha-r-def} of $\alpha_r$ that $\prod_p \beta_p = \alpha_r^2$ and hence, by \eqref{sig-2m}, that
\[ \Sigma_2(m) = \vol(K(m)) \alpha_r^2 + o(x^3).\]
By \eqref{obs-2} above we have $\vol(K(m)) = x^3/2$ for all good values of $m$, and $\vol(K(m)) \leq x^3/2$ for all $m$. It is thus straightforward to conclude the required asymptotic \eqref{sigo} for $b=2$.

Next we turn to the $b=1$ case of \eqref{sigo}. Define
\[ S_1(m) := \sum_{\substack{x/2 < n_1 \le x \\ 0 \leq n_2 < x}}\Lambda(n_1) \prod_{j = 0}^{r-1} \Lambda(n_2 + jr! n_1 + mx).\] Then, by \eqref{obs-2},
\begin{equation}\label{sigma2-s2} x \sum_{m\, \mbox{\scriptsize good}} S_1(m) \leq \sum_m \Sigma_1(m) \leq x\sum_{0 \leq m \leq y/x} S_1(m).\end{equation}
 To estimate $S_1(m)$, apply Theorem \ref{dickson-shifted} with $d := 2$, $t := r+1$, forms $\Psi = (\psi_1,\dots, \psi_{r+1})$ given by
\[ \Psi(n_1,n_2) := (n_1,(n_2 + jr! n_1 + mx)_{0 \leq j < r})\] and convex polytope $K := (x/2, x] \times [0,x)$. The system $\Psi$ also has finite complexity. Noting that $\Psi( K) \subset (\R^{+})^{r+1}$, we obtain

\begin{equation}\label{sig-2-single} S_1(m) = \frac{x^2}{2} \prod_p \beta_p + o(x^2)\end{equation} uniformly in $m$
where
\[ \beta_p := \E_{\vect{n} \in (\Z/p\Z)^2} \prod_{i = 1}^{r+1} \Lambda_{\Z/p\Z} (\psi_i(\vect{n})).\]
We claim that
\begin{equation}\label{bp2} \beta_p = \left\{ \begin{array}{ll} (\frac{p}{p-1})^{r-1} & p \leq r \\ \frac{(p-r)p^{r-1}}{(p-1)^r} & p > r.\end{array}  \right. \end{equation}

The proof of the claim is similar to that of \eqref{bp-claim} but rather easier. Indeed if $p \leq r$ then, modulo $p$, $n_2 + jr! n_1 + mx \equiv n_2 + mx$, and so all the forms $\psi_i(\vect{n})$ are coprime to $p$ if and only if neither $n_1$ nor $n_2 +
 mx$ is zero mod $p$, and so the number of $\vect{n} = (n_1, n_2)$ for which all of the forms $\psi_i(\vect{n})$ are nonzero mod $p$ is precisely $(p-1)^2$.

If $p > r$ then either $n_1 \equiv 0 \pmod{p}$ or else the values of $n_2 + jr! n_1 + mx$, $0 \leq j < r$ are all distinct mod $p$, and hence at most one of them can be zero. Thus if $n_1 \not\equiv 0 \pmod{p}$ then there are $r$ values of $n_2$ for which one
 of the forms $\psi_i(\vect{n})$ vanishes. Thus in this case the number of $\vect{n} = (n_1, n_2)$ for which all of the forms $\psi_i(\vect{n})$ are nonzero mod $p$ is $p^2 - p - (p-1)r = (p-1)(p-r)$, and this confirms the formula for $\beta_p$.

From \eqref{bp2} and \eqref{alpha-r-def} we have $\prod_p \beta_p =
\alpha_r$. It follows from \eqref{sigma2-s2}, \eqref{sig-2-single}
and \eqref{bp2} that \eqref{sigo} holds for $b=1$.

Finally we establish the $b=0$ case of \eqref{sigo}. By \eqref{obs-2}, for all except $o(y/x)$ bad values of $m$ we have $\# I(m,n_1) = x$. If $m$ is good then by the prime number theorem $\Sigma_0(m) \asym x^3/2$, and so the contribution to
$\sum_m \Sigma_0(m)$ from the good $m$ is $\asym yx^2/2$. The contribution from the bad $m$ can be absorbed into the error term, and so \eqref{sigo} for $b=0$ follows.  The proof of Lemma \ref{first}(i) is now complete.

\section{Proof of Lemma \ref{first}(ii)}\label{second-lemma-sec}

In this section we deduce Lemma \ref{first}(ii) from Theorem \ref{dickson-shifted}. The argument is very similar to that in the last section.  As before, $x$ and $y$ obey \eqref{xy}, all $o(1)$ terms may depend on $r$, and $\alpha_r$ is defined in \eqref{alpha-r-def}.

We may again assume that $x$ is an integer.  The analogue of Proposition \ref{sigma-prop} is

\begin{prop}\label{sigma-prop-2}  We have
\begin{equation}\label{sig-new-def} 
\sum_{x/4 < n_1 \le y} |F(n_1)|^2 \Lambda(n_1) = o(y x^2 )
\end{equation}
where
\begin{equation}\label{F-new-def} F(n_1) := \sum_{x/2<n_2\le x} \left(\Lambda(n_2) \prod_{\substack{-i \leq j < r - i \\ j \neq 0}}\Lambda(n_1 + j r! n_2)  - \alpha_r\right).\end{equation}
\end{prop}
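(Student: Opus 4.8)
The plan is to adapt the proof of Proposition~\ref{sigma-prop}, with the two prime variables in opposite roles. As there, we may assume $x\in\Z$. Put $J:=\{\,j\in\Z:\ -i\le j<r-i,\ j\ne 0\,\}$, so $|J|=r-1$ and $0\notin J$, and let $\kappa:=\#\{\,n\in\Z:\ x/2<n\le x\,\}$, so $\kappa\asym x/2$ and $\kappa$ is exactly the number of terms in \eqref{F-new-def}. Expanding the square in \eqref{F-new-def}, the left-hand side of \eqref{sig-new-def} equals $\Sigma_2-2\alpha_r\Sigma_1+\alpha_r^2\Sigma_0$, where
\begin{align*}
\Sigma_2&:=\sum_{\substack{x/4<n_1\le y\\ x/2<n_2,n_3\le x}}\Lambda(n_1)\Lambda(n_2)\Lambda(n_3)\prod_{j\in J}\Lambda(n_1+jr!n_2)\prod_{j\in J}\Lambda(n_1+jr!n_3),\\
\Sigma_1&:=\kappa\sum_{\substack{x/4<n_1\le y\\ x/2<n_2\le x}}\Lambda(n_1)\Lambda(n_2)\prod_{j\in J}\Lambda(n_1+jr!n_2),\\
\Sigma_0&:=\kappa^2\sum_{x/4<n_1\le y}\Lambda(n_1).
\end{align*}
Since $\alpha_r=O_r(1)$, it suffices to prove $\Sigma_b=(1+o(1))\alpha_r^b\cdot\tfrac14 yx^2$ for $b=0,1,2$; the identity $\alpha_r^2-2\alpha_r\cdot\alpha_r+\alpha_r^2\cdot 1=0$ then yields $\Sigma_2-2\alpha_r\Sigma_1+\alpha_r^2\Sigma_0=o(yx^2)$.

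The quantity $\Sigma_0$ is immediate from the prime number theorem: $\sum_{x/4<n_1\le y}\Lambda(n_1)=\psi(y)-\psi(x/4)\asym y$ since $x=o(y)$, and $\kappa\asym x/2$, so $\Sigma_0\asym\tfrac14 yx^2$. For $\Sigma_1$ and $\Sigma_2$ we partition $(x/4,y]$ into the $\asym y/x$ blocks $J(m):=\Z\cap[mx,(m+1)x)\cap(x/4,y]$, $0\le m\le y/x$, and in each block write $n_1=n_1'+mx$ with $n_1'\in[0,x)$. Call a block \emph{good} if $[mx,(m+1)x)\subseteq(x/4,y]$ and $m>(r-1)r!$; all but $O_r(1)$ blocks are good, and for a good block every form $n_1'+mx+jr!n_2$ with $n_1'\in[0,x)$, $n_2\in(x/2,x]$, $j\in J$ is positive. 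In each block we may apply Theorem~\ref{dickson-shifted} with $N:=x$ and $B:=1$: the homogeneous coefficients of the forms below are $O_r(1)$ and the constant terms have size $mx\le y\le x\log x$, so $\Vert\Psi\Vert_{x,1}=O_r(1)$ and the error term is $o_r(x^d)$, uniformly in $m$. We use it for $\Sigma_2$ with $d=3$, $t=2r+1$ and system $\Psi=\bigl(n_1'+mx,\ n_2,\ n_3,\ (n_1'+mx+jr!n_2)_{j\in J},\ (n_1'+mx+jr!n_3)_{j\in J}\bigr)$ on the box $K(m):=\{(u_1',u_2,u_3):\ 0\le u_1'<x,\ u_1'+mx\in(x/4,y],\ x/2<u_2,u_3\le x\}$, and for $\Sigma_1$ with $d=2$, $t=r+1$ and the analogous two-variable system; both systems have finite complexity.

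The local factors $\beta_p$ are evaluated exactly as in Section~\ref{first-lemma-sec}. For $p\le r$ one has $r!\equiv0\pmod p$, so all shifted forms reduce to $n_1$ and $\beta_p=(\tfrac{p}{p-1})^{2(r-1)}$ (for $\Sigma_2$), resp.\ $(\tfrac{p}{p-1})^{r-1}$ (for $\Sigma_1$). For $p>r$, the $\Sigma_1$ count gives $\beta_p=\tfrac{(p-r)p^{r-1}}{(p-1)^r}$; for $\Sigma_2$ the forbidden residues $\{-jr!n_2:j\in J\}$ and $\{-jr!n_3:j\in J\}$ of $n_1\bmod p$ may coincide, and counting the quadruples $(n_2,n_3,j,j')$ with $jn_2\equiv j'n_3\pmod p$ shows the number of admissible residue triples is
\[ (p-1)^2(p-2r+1)+(r-1)^2(p-1)=(p-1)\bigl[(p-1)(p-2r+1)+(r-1)^2\bigr]=(p-1)(p-r)^2, \]
using $(p-1)(p-2r+1)+(r-1)^2=(p-r)^2$; hence $\beta_p=\bigl(\tfrac{(p-r)p^{r-1}}{(p-1)^r}\bigr)^2$. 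Comparing with \eqref{alpha-r-def}, $\prod_p\beta_p=\alpha_r^2$ for $\Sigma_2$ and $\alpha_r$ for $\Sigma_1$. For a good block $\beta_\infty=\vol(K(m))=x(x/2)^2=\tfrac14 x^3$ (resp.\ $x(x/2)=\tfrac12 x^2$); for any block $\beta_\infty$ is no larger, so the $O_r(1)$ bad blocks contribute $O_r(x^3)=o(yx^2)$ (resp.\ $O_r(x^3)=o(yx^2)$ after the factor $\kappa$). Summing the main terms $\tfrac14\alpha_r^2 x^3+o_r(x^3)$ (resp.\ $\kappa(\tfrac12\alpha_r x^2+o_r(x^2))$) over the $\asym y/x$ good blocks—the uniform errors totalling $o(yx^2)$—gives $\Sigma_2=(1+o(1))\alpha_r^2\cdot\tfrac14 yx^2$ and $\Sigma_1=(1+o(1))\alpha_r\cdot\tfrac14 yx^2$, as required.

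The main obstacle is the local factor computation for the $\Sigma_2$ system: here, unlike in Section~\ref{first-lemma-sec}, the two copies of the progression share the common term $n_1$ rather than the common difference, so the two sets of forbidden residues for $n_1$ interfere, and one must check by hand that their overlap contributes exactly the amount $(r-1)^2(p-1)$ needed to restore the clean value $(p-1)(p-r)^2$. The rest—handling the $O_r(1)$ boundary blocks and verifying that the $O(y/x)$ uniform errors from Theorem~\ref{dickson-shifted} total $o(yx^2)$—is routine.
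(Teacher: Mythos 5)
Your proof is correct and follows essentially the same route as the paper: expand the square into $\Sigma_2,\Sigma_1,\Sigma_0$, partition the range of $n_1$ into $O(y/x)$ blocks of length $x$, apply Theorem~\ref{dickson-shifted} block by block (with $B=1$), compute the local factors, and sum. The one point worth noting is that what you flag as the main obstacle — the overlap of the two forbidden sets for $n_1\bmod p$ when $p>r$ — disappears if one instead fixes $n_1\not\equiv -mx\pmod p$ first and counts admissible $(n_2,n_3)$ as in Section~\ref{first-lemma-sec}: the $r$ bad residues for $n_2$ and the $r$ bad residues for $n_3$ are then independent, and plain inclusion–exclusion gives $(p-1)(p^2-2rp+r^2)=(p-1)(p-r)^2$ directly; your quadruple-counting identity $(p-1)(p-2r+1)+(r-1)^2=(p-r)^2$ of course yields the same value.
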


Let us assume this proposition for the moment and conclude the proof of Lemma \ref{first}(ii).
Let $\eps = \eps(x) > 0$ tend to 0 as $x\to\infty$ sufficiently
slowly. 
If $n_1$ is a prime in $(x/4,y]$, we say that $n_1$ is
  \emph{exceptional} and write $n_1 \in \mathscr{E}$ if the number of
  primes $p \in (x/2,x]$ for which $n_1 + j r! p$ is a prime in
    $(x/4,y]$ for $-i \leq j < r - i$, $j \neq 0$, differs from
      $\alpha_r (x/2)/\log^r x$ by at least $\eps x/\log^r x$. 
Arguing as in the proof of Lemma \ref{first}(i), if $n_1
      \in \mathscr{E}$ then for sufficently large $x$ we have
\begin{equation}\label{w33} \Bigg| \sum_{\substack{x/2<n_2\le x \\ n_1 - ir! n_2 > x/4 \\ n_1 + (r - i - 1)r! n_2 \leq y}} \prod_{\substack{-i \leq j < r-i \\ j \neq 0}} \Lambda(n_1 + jr! n_2) - \frac{1}{2}\alpha_r x\bigg| \geq \frac{1}{2}\eps x.\end{equation}
Note that the second and third conditions in the summation are
precisely what constrain all the $n_1 + jr! n_2$, $-i \leq j < r-i$,
to lie in $(x/4,y]$.  If we assume that
\[
(r+1)! x < n_1 < y - (r+1)! x
\]
and recall from \eqref{F-new-def} above the definition of $F(n_1)$,
we see that \eqref{w33} is equivalent to 
\[ |F(n_1)| \geq \frac{1}{2}\eps x.\]
Since $\Lambda(n_1) = \log n_1 \ge \log (x/4)$ for all prime $n_1$, we conclude from
the prime number theorem that the left-hand side of \eqref{sig-new-def} is at least
\[
 \left( \frac12 \eps x \right)^2 \log(x/4) \big( \# \mathscr{E} - O(x/\log x) \big).
\]
From this and \eqref{sig-new-def} we conclude that  $\# \mathscr{E}=o(y/\log
x)$  provided $\eps$ tends to zero sufficiently slowly, and Lemma \ref{first}(ii) follows.

It remains to establish Proposition \ref{sigma-prop-2}.  For this, we break up the range of
$n_1$ as in the proof of Lemma \ref{first}~(i).
 For a non-negative integer $m$ define
\[ I(m) := \Z \cap [mx,(m+1)x) \cap (x/4, y].\]
Then we may decompose the left-hand side of \eqref{sig-new-def} as
\[ \sum_{\substack{0 \leq m \leq y/x\\ n_1 \in I(m)}}  |F(n_1)|^2\Lambda(n_1),\]
 With a simple change of variables we see that this quantity equals
\[ \sum_{\substack{0 \leq m \leq  y/x \\ n_1 \in I(m) - mx}} \left| \sum_{x/2<n_2\le x} \!\! \Lambda(n_2)\!\!\!\! \prod_{\substack{-i \leq j < r-i \\ j \neq 0}} \!\!\Lambda(n_1 +j r! n_2 + mx)  - \frac{1}{2}\alpha_r x\right |^2\Lambda(n_1 + mx). \]
Expanding out the square and applying the prime number theorem, we may therefore express the above quantity as
\[ \sum_{0\leq m \leq y/x} \Sigma_2(m) - \alpha_r x \sum_{0 \leq m
  \leq y/x} \Sigma_1(m) + \frac{1}{4}\alpha_r^2 x^2 y + o(x^2 y),\] 
where
\[ \Sigma_2(m) :=  \sum_{\substack{n_1 \in I(m) - mx \\ x/2<n_2 \le x
    \\ x/2 < n_3 \le x}}  \Lambda(n_1 + mx) \Lambda(n_2)\Lambda(n_3)
\!\!\!\prod_{\substack{-i \leq j < r - i \\ j \neq 0 }} \;
\prod_{\ell=2,3} \Lambda(n_1 + j r! n_{\ell} + mx) \]
and
\[ \Sigma_1(m) = \sum_{\substack{n_1 \in I(m) - mx \\ x/2<n_2\le x}}  \Lambda(n_2)\prod_{-i \leq j < r - i}\Lambda(n_1 +j r! n_2 + mx) .\]
It will thus suffice to show that
\begin{equation}\label{sigo-new}  \sum_{0 \leq m \leq y/x} \Sigma_b(m) \asym \left(\alpha_r \frac{x}{2}\right)^b y
\end{equation}
for $b=1,2$.

We first handle the $b=2$ case of \eqref{sigo-new}.  
We can estimate $\Sigma_2(m)$ using Theorem \ref{dickson-shifted} with $d := 3$, $t := 2r + 1$, forms $\Psi = (\psi_1,\dots, \psi_{2r+1})$ given by
\[ \Psi(n_1,n_2,n_3) := (n_1 + mx, n_2,n_3, (n_1 + jr! n_{\ell} + mx)_{-i\leq j < r-i, j \neq 0, \ell= 2,3} )\] and convex polytope $K(m) := (I(m) - mx) \times (x/2, x] \times (x/2,x]$. The theorem tells us that uniformly in $m$ we have
\begin{equation}\label{sig-1-prelim-new} \Sigma_2(m) = \vol(K(m)) \prod_p \beta_p + o(x^3),\end{equation} where again
\[ \beta_p := \E_{\vect{n} \in (\Z/p\Z)^3} \prod_{i = 1}^{2r +1} \Lambda_{\Z/p\Z}(\psi_i(\vect{n})).\]
It is again clear that the system $\Psi$ has finite complexity. 

Now we claim that the $\beta_p$ are given by the same formulae as in \eqref{bp-claim}, that is to say
\[ \beta_p = \left\{ \begin{array}{ll} (\frac{p}{p-1})^{2(r - 1)} & p
  \leq r \\ \pfrac{(p-r)p^{r-1}}{(p-1)^r}^2 & p > r.\end{array}
\right. \]
The proof of this is very similar to that of \eqref{bp-claim}, but subtly different. If $p \leq r$ then the forms $\psi_i(\vect{n})$ are all equal to one of $n_1+ mx,n_2,n_3$ mod $p$, and so there are $(p-1)^3$ choices of $\vect{n} \in (\Z/p\Z)^3$ for which all the forms are coprime to $p$. If $p > r$ then we must choose $n_1 \not\equiv -mx \pmod{p}$. For any such choice there are precisely $r$ choices of $n_2$ for which one of $n_1 + jr! n_2 + mx$ ($-i \leq j < r - i$, $j \neq 0$) and $n_2$ is $0 \pmod{p}$, namely $n_2 \equiv 0\pmod{p}$ and $n_2 \equiv -(jr!)^{-1} (n_1 + mx) \pmod{p}$ for $-i \leq j < r-i$, $j \neq 0$. Similarly there are precisely $r$ choices for which one of $n_1 + jr! n_3 + mx$ ($-i \leq j < r-i$, $j \neq 0$) and $n_3$ is $0 \pmod{p}$, and so we have $2rp - r^2$ bad choices of $(n_2,n_3)$ for each $n_1 \not\equiv -mx \pmod{p}$. Therefore, as before, the number of choices of $\vect{n}$ for which at least one of the $\psi_i(\vect{n})$ vanishes mod $p$ is $p^3 - p^2 - (2rp - p^2) = (p-1)(p-r)^2$.

Therefore $\prod_p \beta_p = \alpha_r^2$, and hence from \eqref{sig-1-prelim-new} we have
\[ \sum_{0 \leq m \leq y/x} \Sigma_2(m) = \alpha_r^2 \sum_{0 \leq m \leq y/x} \vol(K(m)) + o(y x^2).\] 
We have $\# I(m) = x$ and hence $\vol(K(m)) = x^3/4$ for all except $o(y/x)$ values of $m$, and so the $b=2$ case of \eqref{sigo-new} follows immediately.

Now we turn our attention to the $b=1$ case of \eqref{sigo-new}. Again we can estimate it using Theorem \ref{dickson-shifted}, now with $d := 2$, $t := r+1$, forms $\Psi = (\psi_1,\dots, \psi_{r+1})$ given by 
\[ \Psi(n_1,n_2) := ( n_2, (n_1 + jr! n_2 + mx)_{-i \leq j < r-i})\] and convex polytope $K(m) := (I(m) - mx) \times (x/2,x]$. 

 The theorem tells us that uniformly in $m$ we have
\begin{equation}\label{sig-2-prelim-new} \Sigma_1(m) = \vol(K(m)) \prod_p \beta_p + o(x^2),\end{equation} where 
\[ \beta_p := \E_{\vect{n} \in (\Z/p\Z)^2} \prod_{i = 1}^{r+1} \Lambda_{\Z/p\Z}(\psi_i(\vect{n})).\]
Again the system $\Psi$ has finite complexity. 

We claim that the $\beta_p$ are the same as in \eqref{bp2}, that is to say
\[ \beta_p = \left\{ \begin{array}{ll} (\frac{p}{p-1})^{r-1} & p \leq
  r \\ \frac{(p-r)p^{r-1}}{(p-1)^r} & p > r.\end{array}  \right. \]
Indeed if $p \leq r$ then, mod $p$, all the forms in $\Psi$ are either $n_1 + mx$ or $n_2$, so there are $(p-1)^2$ choices of $\vect{n}$ for which all of the $\psi_i(\vect{n})$ are coprime to $p$. If $p > r$ then we must take $n_1 \not\equiv -mx \pmod{p}$, and then for each such choice there are precisely $r$ values of $n_2 \pmod{p}$ for which one of the $\psi_i(\vect{n})$ is $0 \pmod{p}$, namely $n_2 \equiv 0\pmod{p}$ and $n_2 \equiv -(jr!)^{-1} (n_1 + mx) \pmod{p}$ for $-i \leq j < r-i$, $j \neq 0$. It follows that there are $p^2 - p - r(p-1) = (p-1)(p-r)$ choices of $\vect{n} \in (\Z/p\Z)^2$ for which none of the $\psi_i(\vect{n})$ is $0 \pmod{p}$.

Therefore $\prod_p \beta_p = \alpha_r$, and hence from \eqref{sig-2-prelim-new} we have
\[ \sum_{0 \leq m \leq y/x} \Sigma_1(m) = \alpha_r \sum_{0 \leq m \leq y/x} \vol(K(m)) + o(y x).\]
We have $\# I(m) = x$ and hence $\vol(K(m)) = x^2/2$ for all except $o(y/x)$ values of $m$, and so the $b=1$ case of \eqref{sigo-new} follows immediately.  The proof of Lemma \ref{first}(ii) is now complete.

\section{Further comments and speculations}

The reduction of Theorem \ref{third-red} to Theorem \ref{fourth-red} was somewhat wasteful, as one replaced the entire residue class $q_p \pmod{p}$ by a fairly short arithmetic progression $q_p, q_p+r!p,\dots, q_p+(r-1)r!p$ inside that residue class.  One could attempt to strengthen the argument here by working with more general patterns such as $q_p, q_p + a_1 r! p, \dots, q_p + a_r r! p$ for some $0 < a_1 < \dots < a_r = o(y/x)$, and possibly trying to exploit further averaging over the $a_1,\dots,a_r$.  However, we were unable to take advantage of such ideas to make any noticeable improvements to the arguments or results.

The dependence of $R$ on $x$ in Theorem \ref{mainthm} is completely ineffective, for two different reasons. The sources of this ineffectivity are
\begin{itemize}
\item The use of Davenport's ineffective bound
\[ \sup_{\theta} |\E_{n \in [N]} \mu(n) e(n \theta)| \ll_A \log^{-A} N\] in \cite{gt-nilmobius}, which is intimately related to the possibility of Siegel zeros; and
\item the use of ultrafilter arguments in \cite{GTZ} (and in other work of the inverse conjectures for the Gowers norms, such as that of Szegedy \cite{szegedy}).
\end{itemize}

The first source of ineffectivity appears to be less serious than the second with our present state of knowledge. For example, if one is only interested in having the conclusion of Theorem \ref{mainthm} for an infinite sequence of $x$'s (rather than all sufficiently large $x$) then by choosing $x$ judiciously the influence of Siegel zeros can be avoided and one has an effective version of Davenport's bound.  See \cite{Da} for some related discussion.

The second source of ineffectivity is problematic, since in taking $R$ large we need inverse theorems for the Gowers $U^{s+1}[N]$-norm with $s = s(R)$ tending to infinity. Proofs not using ultrafilters are only known in the cases $s = 2,3$ and $4$, and the bounds in the inverse theorem \cite{GTZ-4} for the Gowers $U^{4}[N]$-norm (which were not worked out in that paper) are already incredibly bad, of ``$\log_*$ type'' or worse. In principle (but with great pain) the ultrafilters in \cite{GTZ} could be removed, but the bounds would be similarly bad. It seems that a genuinely new idea is needed to make these bounds, and thus the approach of the present paper, effective in any reasonable sense (for example $R$ being bounded below by $\log_k x$ for some finite $k$).

\appendix

\section{Linear equations in primes}\label{linear-primes-app}

\emph{In this appendix all page numbers refer to the published version of the paper \cite{gt-linearprimes}, with which we assume a certain familiarity.}

We turn now to the proof of Theorem \ref{dickson-shifted}, indicating the points at which we must be careful assuming only the bound $\Vert \Psi \Vert_{N, B} \leq L$ rather than the stronger bound $\Vert \Psi \Vert_{N} \leq L$ allowed in Theorem A, which is the main theorem of \cite{gt-linearprimes}.  The key points are that (a) the sieve-theoretic portions of \cite{gt-linearprimes} are essentially unaffected by shifts, and (b) the M\"obius-nilsequences conjecture used in \cite{gt-linearprimes} comes with a savings of $\log^{-A} N$ for arbitrary $A > 0$, which is enough to absorb the effect of shifting for that portion of the argument.

We require a precise measure of the \emph{complexity} of the system
 $\Psi$ (cf. \cite[Definition 1.5]{gt-linearprimes}) which plays a crucial role in the arguments.  If $1 \leq i \leq t$ and $s \geq 0$ then we say that $\Psi$ has $i$-complexity at most $s$ if one can cover the $t-1$ forms $\{\dot\psi_j : j \ne i \}$ by $s + 1$ classes, such that $\dot\psi_i$ does not lie in the linear span of any of these classes. The \emph{complexity} of the system of forms $\Psi$ is defined to be the least $s$ for which the system has $i$-complexity at most $s$ for all $1 \leq i \leq t$, or $\infty$ if no such $s$ exists.  Note that a system $\Psi$ has finite complexity if and only if no form $\dot\psi_i$ is a multiple of any other form $\dot\psi_j$. 

 Let us first of all note that \cite{gt-linearprimes} was written to be conditional upon two sets of conjectures, the M\"obius and Nilsequences Conjecture $\mbox{MN}(s)$ and the Inverse Conjectures for the Gowers norms $\mbox{GI}(s)$ which were unproven at the time in the cases $s \geq 3$. These are now theorems, established in \cite{gt-nilmobius} and \cite{GTZ} respectively, and thus the results of \cite{gt-linearprimes} which we plan to modify in this section are unconditional. We have no need to change any aspect of the inner workings of either \cite{gt-nilmobius} or \cite{GTZ}.

The argument in \cite{gt-linearprimes} proceeds via a series of reductions to other statements. First, in \cite[Chapter 4]{gt-linearprimes}, some straightforward linear algebra reductions are given. The first part of the chapter concerns \cite[Theorem 1.8]{gt-linearprimes} and does not concern us here; our interest begins near the top of page 1771. The subsection ``\emph{Elimination of the archimedean factor}'' makes no use of any bound on $\Vert \Psi \Vert_N$. This section allows us to assume henceforth that $\psi_1,\dots,\psi_t > N^{9/10}$ on $K$. The only change we need to make to the next subsection, ``\emph{Normal form reduction of the main theorem}'' is to replace $\Vert \cdot \Vert_N$ in the statement of \cite[Lemma 4.4]{gt-linearprimes} by $\Vert \cdot \Vert_{N, B}$. That such a variant is valid follows from the proof of \cite[Lemma 4.4]{gt-linearprimes} and in particular the observation that $\tilde\Psi(0) = \Psi(0)$, where $\tilde\Psi : \Z^{d'} \rightarrow \Z^t$ is the system of forms constructed in that proof.

The rest of \cite[Chapter 4]{gt-linearprimes} carries over unchanged. Thus (changing $L$ to $\tilde L = O_{d,t,L}(1)$) we may assume henceforth that our system affine-linear forms $\psi_i$ is in $s$-normal form and still satisfies $\Vert \Psi \Vert_{N,B} \leq L$.

The next step, undertaken in \cite[Chapter 5]{gt-linearprimes} is to decompose the sum
\[ \sum_{\vect{n} \in K \cap \Z^d} \prod_{i \in [t]} \Lambda(\psi_i(\vect{n}))\] along progressions with common difference $W = \prod_{p \leq w} p$, where $w = \log \log \log N$ (say). This is the ``$W$-trick''. The task of proving Theorem A is reduced to that of establishing the estimate (\cite[Theorem 5.1]{gt-linearprimes})
\begin{equation*}\label{gt-51} \sum_{\vect{n} \in K \cap \Z^d} \Bigg(\prod_{i \in [t]} \Lambda'_{b_i, W}(\psi_i(\vect{n})) - 1\Bigg) = o(N^d)\end{equation*}
with $b_1,\dots,b_t \in [W]$ coprime to $W$, uniformly in the choice of $b_i$. Here
\[ \Lambda'_{b,W}(n) := \frac{\phi(W)}{W}\Lambda'(Wn + b)\] and $\Lambda'$ denotes the restriction of $\Lambda$ to the primes.

We claim that the proof of Theorem \ref{dickson-shifted} may be similarly reduced to the task of establishing 

\begin{equation}\label{gt-51-new} \sum_{\vect{n} \in K \cap \Z^d} \Bigg(\prod_{i \in [t]} \Lambda'_{b_i, W}(\psi_i(\vect{n})) - 1\Bigg) = o_B(N^d)\end{equation} uniformly for $b_1,\dots, b_t \in [W]$, but now only assuming the weaker condition $\Vert \Psi \Vert_{N, B} \leq L$.

The reduction proceeds exactly as in \cite[Chapter 5]{gt-linearprimes}, except that at the bottom of page 1777 we must remark that the constant term $\tilde\psi_{i,a}(0)$ is now only bounded by $O_{L,d,t}(N\log^B N/W)$, and where on page 1778 we said that $\Vert \tilde\Psi \Vert_{\tilde N} = O(1)$, we must now say that $\Vert \tilde\Psi \Vert_{\tilde N, B} = O(1)$.

The desired estimate \eqref{gt-51-new} may be written in the equivalent form
\begin{equation}\label{gt-51-new2} \sum_{\vect{n} \in K \cap \Z^d} \Bigg(\prod_{i \in [t]} T^{\psi_i(0)}\Lambda'_{b_i, W}(\dot\psi_i(\vect{n})) - 1\Bigg) = o_B(N^d),\end{equation} where $\dot\psi_i$ denotes the homogeneous (linear) part of the affine form $\psi_i$ and $T$ denotes the translation operator defined by $T^af(x) := f(x + a)$. The homogeneous system $\dot\Psi = (\dot\psi_1,\dots, \dot\psi_t)$ satisfies the condition $\Vert \tilde\Psi \Vert_N \leq L$.

The first step in proving \eqref{gt-51-new2} is to prove a variant of \cite[Proposition 6.4]{gt-linearprimes} for the shifted functions $T^{\psi_i(0)} \Lambda'_{b_i, W}$. We claim that in fact the following generalisation of that proposition holds (for notation and further discussion, see \cite[Chapter 6]{gt-linearprimes}).

\begin{prop64new}\label{shift-majorant}
Let $D > 1$ be arbitrary, and let $z_1,\dots, z_t \in \Z_{\geq 0}$, $z_i \leq N^{1.01}$, be arbitrary shifts. Then there is a constant $C_0 := C_0(D)$ such that the following is true. Let $C \geq C_0$, and suppose that $N' \in [CN,2CN]$. Let $b_1,\dots, b_t \in \{0,1,\dots, W-1\}$ be coprime to $W$. Then there exists a $D$-pseudorandom measure $\nu : \Z_{N'} \rightarrow \R^{+}$ \textup{(}depending on $z_1,\dots, z_t$\textup{)} which obeys the pointwise bounds
\[ 1 + T^{z_1}\Lambda'_{b_1, W}(n) + \dots + T^{z_t}\Lambda'_{b_t, W}(n) \ll_{D,C} \nu(n)\] for all $n \in [N^{3/5}, N]$, where we identify $n$ with an element of $\Z_{N'}$ in the obvious manner.
\end{prop64new}

The proof of \cite[Proposition 6.4]{gt-linearprimes} was presented in \cite[Appendix D]{gt-linearprimes}. We now indicate the modifications necessary to that argument to obtain the more general Proposition 6.4'. The first modification we need to make is on page 1839, where we instead define the preliminary weight $\tilde\nu : [N] \rightarrow \R^+$ by setting
\[ \tilde\nu(n) := \E_{i \in [t]} \frac{\phi(W)}{W} T^{z_i}\Lambda_{\chi,R,2}(Wn + b_i).\] 
We have the bound
\begin{equation}\label{analogous-bound} T^{z_i}\Lambda'_{b_i,W}(n)
  \ll_{C,D} \frac{\phi(W)}{W} T^{z_i}\Lambda_{\chi, R, 2}(Wn +
  b_i)\end{equation} for all $i \in [t]$ and all $n \in [N^{3/5}, N]$,
analogous to that stated at the bottom of page 1839. The key
observation here is that the left-hand side is only nonzero when $W(n
+ z_i) + b_i$ is a prime, in which case it equals
$\frac{\phi(W)}{W}\log(W (n + z_i) + b_i) < \frac{2\phi(W)}{W} \log N$
(since $W \le \log N, n \leq N$ and $z_i \leq N^{1.01}$). However if $n \in [N^{3/5}, N]$ then $W(n + z_i) + b_i \geq N^{3/5}$, and so if the sieve level $\gamma$ used in the definition of $\Lambda_{\chi, R,2}$ satisfies $\gamma < \frac{3}{5}$ then the right-hand side is $\frac{\phi(W)}{W} \log R$. Since $R = N^{\gamma}$ and $\gamma$ depends only on $C,D$ (see halfway up page 1839), \eqref{analogous-bound} follows.

As in \cite[Appendix D]{gt-linearprimes}, we then transfer to $\Z_{N'}$ by setting $\nu(n) := \frac{1}{2} + \frac{1}{2} \tilde\nu(n)$ when $n \in [N]$ and $\nu(n) := 1$ otherwise. 

We then need to go back and modify the proof of \cite[Theorem D.3]{gt-linearprimes} so that it applies with $T^{z_i}\Lambda_{\chi_i, R, a_i}$ replacing $\Lambda_{\chi_i, R, a_i}$. Equivalently, we need to establish this theorem with only the weak bound $|\psi_i(0)| \ll N^{1.01}$ on the constant terms of the forms $\psi_i$, rather than the stronger bound $\Vert \Psi \Vert_N \leq L$ assumed there.  In fact, no bound on the $\psi_i(0)$ is required in this part of the argument at all. The first place in that argument that the assumption $\Vert \Psi \Vert_N \leq L$ is used is in page 1833, where it is asserted that $\alpha(p,B) = \E_{\vect{n} \in \Z_p^d} 1_{p | \psi_i(\vect{n})} $ is equal to $1/p$ if $p \geq p_0(t,d,L)$ is sufficiently large. It is easy to see that the bound here depends only on the sizes of the coefficients in the homogeneous parts of $\psi_i$. The second place that this assumption is used is on page 1834, in the appeal to \cite[Lemma 1.3]{gt-linearprimes}. As it happens only two of the three conclusions of this lemma as stated are valid under the weaker assumption: there is a superfluous statement about what happens for $p > C(d,t,L) N$ which fails in our present context, but which is not needed for the applications in \cite[Appendix D]{gt-linearprimes}. An appropriately modified version of the lemma is the following.

\begin{lem13new} Suppose that $\Psi = (\psi_1,\dots, \psi_t)$ is a system of linear forms such that the homogeneous parts $\dot\Psi = (\dot\psi_1,\dots,\dot\psi_t)$ satisfy $\Vert \dot\Psi\Vert_N \leq L$. Then the local factors $\beta_p$ satisfy $\beta_p = 1 + O_{t,d,L}(p^{-1})$. If, furthermore, no two of the forms $\dot\psi_i$ are parallel then $\beta_p = 1 + O_{t,d,L}(p^{-2})$.
\end{lem13new}

This lemma, whose proof is the same as that of \cite[Lemma 1.3]{gt-linearprimes}, applies equally well on page 1834. The rest of the proof of \cite[Theorem D.3]{gt-linearprimes} goes through unchanged.

The deduction of the linear forms conditions for $\tilde\nu$ now proceeds exactly as on pages 1840, with $\Lambda_{\chi_i,R,a_i}$ replaced by its shifted variant $T^{z_i} \Lambda_{\chi_i, R, a_i}$ whenever necessary.

The proof of the correlation conditions for $\tilde \nu$, starting at the bottom of page 1840, needs to be tweaked a little\footnote{Note, however, that by the work of Conlon, Fox and Zhao \cite{cfz} one could in principle dispense with the need for this condition entirely.}. Instead of the bound at the bottom of page 1840, we must establish a variant with shifts, namely
\[ \left(\frac{\phi(W)}{W}\right)^m \Bigg(\sum_{n \in I}\prod_{j \in [m]} \Lambda_{\chi, R, 2}(W(n + h_j)+ b_{i_j} + Wz_{i_j})  \Bigg) \ll N \sum_{1 \leq j < j' \leq m}\tau(h_j - h_{j'})\] whenever $i_1,\dots, i_m \in [t]$. Here, the function $\tau$ is required to satisfy $\E_{n \in [-N,N]} \tau(n)^q \ll_q 1$. In the argument on page 1841, the set $P_{\Psi}$ is now the set of primes dividing $W(h_j - h_{j} + z_{i_j} - z_{i_j'}) + b_{i_j} - b_{i_{j'}}$ for some $1 \leq j < j' \leq m$, and we define
\[ \tau(n) := \sum_{1 \leq j < j' \leq m} \exp \Bigg(O  (1)\sum_{\substack{p > w \\ p | Wn + W(z_{i_j} - z_{i_{j'}}) + (b_{i_j} - b_{i_{j'}})}} \frac{1}{p^{1/2} }\Bigg) .\]
It now suffices to prove the bound
\[ \E_{n \in [N]} \exp\Bigg(q \sum_{\substack{p > w \\ p | Wn + h}} \frac{1}{p^{1/2}}\Bigg) \ll_q 1\] uniformly for all $h = O(N^{1.02})$. This is the same as the estimate at the bottom of page 1841, only there we had the stronger assumption $h = O(W)$. The only difference this makes to the argument is that the third displayed equation on page 1842 (which it is our task to prove) only comes with the weaker constraint $d = O(N^{1.02})$, that is to say we must show
\[ \sum_{\substack{(d,W) = 1 \\ d = O(N^{1.02})}} d^{-1/4} \sum_{\substack{n \in [N] \\ d | Wn + h}} 1 \ll N,\] whereas before we had $d = O(WN)$. However, the proof of this slightly stronger bound is the same: using the bound
\[ \sum_{\substack{n \in [N] \\ d | Wn + h}} 1 \ll 1 + \frac{N}{d},\] it reduces to
\[ \sum_{d = O(N^{1.02})} d^{-1/4} \left(1 + \frac{N}{d}\right) \ll N,\] a true statement. This at last completes the proof of Proposition 6.4'.

We now continue with the arguments of \cite[Chapter 7]{gt-linearprimes}. Using Proposition 6.4' in place of \cite[Proposition 6.4]{gt-linearprimes}, we see that the proof of \eqref{gt-51-new}, and hence of Theorem \ref{dickson-shifted}, reduces to establishing the bound

\begin{equation*}\label{gowers-shift} \Vert T^z \Lambda'_{b,W} - 1\Vert_{U^{s+1}[N]} = o_{s,B}(1)\end{equation*} uniformly for all $b \in \{0,1,\dots, W-1\}$ and for all shifts $z$ with $|z| \leq LN\log^B N$. 

By the arguments of \cite[Section 10]{gt-linearprimes} (but using Proposition 6.4' in place of \cite[Proposition 6.4]{gt-linearprimes}) we can reduce to proving the bound

\begin{equation*}\label{nil-cor} \E_{n \in [N]} (T^z \Lambda'_{b,W}(n) - 1) \psi(n) = o_{\psi,B}(1)\end{equation*} for any $s$-step nilsequence $\psi(n) = F(g^n x)$, where the $o_{\psi}(1)$ term may depend on the underlying nilmanifold $G/\Gamma$ and the Lipschitz constant of $F$ but not on the nilrotation $g$.

Chapter 11 of \cite{gt-linearprimes} requires no change, and the only changes required to Chapter 12 up to the bottom of page 1804 are to replace $\Lambda^{\sharp}$ and $\Lambda^{\flat}$ by their shifted variants $T^z\Lambda^{\sharp}$ and $T^z\Lambda^{\flat}$. This reduces matters to establishing the two estimates
\begin{equation}\label{lam-sharp} \left\Vert \frac{\phi(W)}{W} T^z\Lambda^{\sharp}(Wn + b) - 1\right\Vert_{U^{s+1}[N]} = o_s(1)  \end{equation} (the shifted analogue of (12.5) in \cite{gt-linearprimes}) 
and
\begin{equation}\label{lam-flat}  \E_{n \in [N]} \frac{\phi(W)}{W} T^z \Lambda^{\flat}(Wn + b) \psi(n) = o_{\psi, B}(1)\end{equation} for all nilsequences $\psi$ (the shifted analogue of (12.4) in \cite{gt-linearprimes}.

The proof of the first of these, \eqref{lam-sharp}, proceeds exactly as in the proof of (12.5) of \cite{gt-linearprimes}, which is given on page 1842--1843. The only change required is to use the variant of \cite[Theorem D.3]{gt-linearprimes} with shifts, the validity of which was noted above. For this argument, we do not need any bound on $z$.

Finally we turn to the estimate \eqref{lam-flat}. The analysis of page 1805 may be easily adapted, with the result that it is enough to prove that 
\[ \E_{n \in [N]} T^{zW} \Lambda^{\flat}(n) \psi(n) = o_{\psi, B}(1).\]
This, however, follows immediately from (12.10) of \cite{gt-linearprimes}, which asserted the bound
\[ \bigg|\sum_{n \in [N]} \Lambda^{\flat}(n) \psi(n)\bigg| \ll_{\psi, A} N \log^{-A} N\] for any $A$. In particular, taking $A = B + 2$ (and noting that $W = o(\log N)$ and $z \leq LN\log^B N$) we have
\[ \bigg|\sum_{1 \leq n \leq N + zW} \Lambda^{\flat}(n) \psi(n)\bigg| = o_{\psi,B}(N)\]
and
\[ \bigg|\sum_{1 \leq n \leq N} \Lambda^{\flat}(n)\psi(n)\bigg| = o_{\psi, B}(N).\]
Subtracting these two estimates gives the result.

\end{document}